\newtheorem{proposition}{Proposition}[section]
\newtheorem{theorem}[proposition]{Theorem}
\newtheorem*{theorem_main}{Main theorem}
\newtheorem{corollary}[proposition]{Corollary}
\newtheorem{lemma}[proposition]{Lemma}
\newtheorem*{proposition*}{Proposition}
\newtheorem{conjecture}[proposition]{Conjecture}
\theoremstyle{definition}
\newtheorem{definition}[proposition]{Definition}
\newtheorem{remark}[proposition]{Remark}
\newcommand{\br}{\text{b}}
\newcommand{\C}{\mathcal{C}}
\newcommand{\str}{{\operatorname{str}}}
\renewcommand{\top}{{\operatorname{top}}} 
\newcommand{\ex}{{\operatorname{ex}}}
\newcommand{\Ctop}{\C_\top}
\newcommand{\Cex}{\C_\ex}
\newcommand{\Chat}{\widehat{\C}}
\newcommand{\Sstr}{\S_{\str}}
\newcommand{\Shat}{\widehat{\S}}
\newcommand{\Z}{\mathbb{Z}}
\newcommand{\Q}{\mathbb{Q}}
\renewcommand{\S}{\mathcal{S}}
\newcommand{\K}{\mathcal{K}}
\newcommand{\bdry}{\ensuremath{\partial}}
\newcommand{\im}{\operatorname{Im}}
\newcommand{\id}{\operatorname{id}}
\newcommand{\maps}{\operatorname{Maps}}
\newcommand{\into}{\hookrightarrow}
\renewcommand{\H}{\mathcal{H}}
\newcommand{\bigslant}[2]{{\raisebox{.2em}{$#1$}\left/\raisebox{-.2em}{$#2$}\right.}}
\numberwithin{equation}{section}
\begin{document}
\title[Satellite operators as group actions]{Satellite operators as group actions\\on knot concordance}

\author{Christopher W.\ Davis}
\address{Department of Mathematics, The University of Wisconsin at Eau Claire}
\email{daviscw@uwec.edu}
\urladdr{http://people.uwec.edu/daviscw/}

\author{Arunima Ray$^{\dag}$}
\address{Department of Mathematics, Brandeis University}
\email{aruray@brandeis.edu}
\urladdr{http://people.brandeis.edu/$\sim$aruray/}

\thanks{$^{\dag}$Partially supported by NSF--DMS--1309081 and the Nettie S.\ Autrey Fellowship (Rice University)}

\date{\today}
\subjclass[2000]{57M25}
\keywords{}

\begin{abstract}Any knot in a solid torus, called a pattern or satellite operator, acts on knots in $S^3$ via the satellite construction. We introduce a generalization of satellite operators which form a group (unlike traditional satellite operators), modulo a generalization of concordance. This group has an action on the set of knots in homology spheres, using which we recover the recent result of Cochran and the authors that satellite operators with strong winding number~$\pm 1$ give injective functions on topological concordance classes of knots, as well as smooth concordance classes of knots modulo the smooth 4--dimensional Poincar\'{e} Conjecture. The notion of generalized satellite operators yields a characterization of surjective satellite operators, as well as a sufficient condition for a satellite operator to have an inverse. As a consequence, we are able to construct infinitely many non-trivial satellite operators $P$ such that there is a satellite operator $\overline{P}$ for which $\overline{P}(P(K))$ is concordant to $K$ (topologically as well as smoothly in a potentially exotic $S^3\times [0,1]$) for all knots $K$; we show that these satellite operators are distinct from all connected-sum operators, even up to concordance, and that they induce bijective functions on topological concordance classes of knots, as well as smooth concordance classes of knots modulo the smooth 4--dimensional Poincar\'{e} Conjecture. \end{abstract}

\maketitle

\section{Introduction}

The satellite operation is a classical and well-studied family of functions on the set of knots in $S^3$. Briefly, the satellite operation involves a \textit{satellite operator}, or \textit{pattern}, $P$, i.e.\ a knot in a solid torus $V=S^1\times D^2$, and a knot $K$ in $S^3$, called the \textit{companion}; the \textit{satellite knot} $P(K)$ is the image of the knot $P$ when the solid torus $V$ is tied into the knot $K$ (see Figure~\ref{fig:satelliteoperation} and Section~\ref{sec:background}).

\begin{figure}[h!]
\begin{center}
\begin{picture}(4,1.15)
\put(0,0.25){\includegraphics{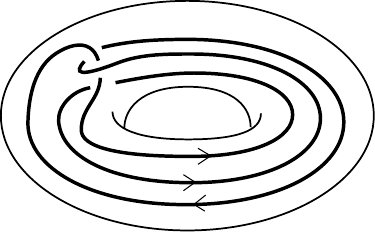}}
\put(0.73,-0.1){$P$}
\put(1.75,0.15){\includegraphics{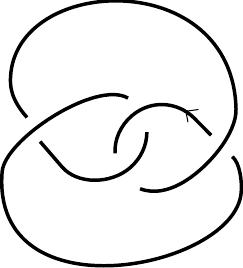}}
\put(2.2,-0.1){$K$}
\put(2.97,0.15){\includegraphics{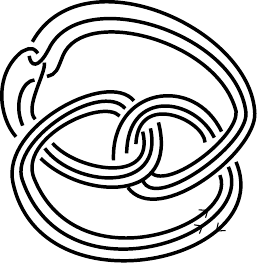}}
\put(3.3,-0.1){$P(K)$}
\end{picture}
\end{center}
\caption{The satellite operation on knots in $S^3$.}\label{fig:satelliteoperation}
\end{figure}

We will consider four different equivalence relations on knots, with corresponding sets of equivalence classes $\C$, $\Ctop$, $\Cex$, and $\C_{R}$, where $R$ is any localization of $\Z$. Here $\C$ denotes the \textit{smooth knot concordance group}, consisting of knots up to smooth concordance, where recall that two knots $K_0\hookrightarrow S^3\times\{0\}$ and $K_1\hookrightarrow S^3\times\{1\}$ are \textit{smoothly concordant} if they cobound a smooth, properly embedded annulus in $S^3\times [0,1]$ with its usual smooth structure. Similarly, $K_0\hookrightarrow S^3\times\{0\}$ and $K_1\hookrightarrow S^3\times\{1\}$ are \textit{topologically concordant} if they cobound a locally flat, properly embedded annulus in a topological manifold homeomorphic to $S^3\times [0,1]$; knots modulo topological concordance form the \textit{topological concordance group}, denoted $\Ctop$. As a transition of sorts between $\C$ and $\Ctop$, we have the \textit{exotic concordance group} $\Cex$, consisting of knots up to exotic concordance, where $K_0\hookrightarrow S^3\times\{0\}$ and $K_1\hookrightarrow S^3\times\{1\}$ are \textit{exotically concordant} if they cobound a smooth, properly embedded annulus in a smooth manifold homeomorphic to $S^3\times [0,1]$, but not necessarily diffeomorphic, i.e.\  $\Cex$ consists of knots up to concordance in a potentially exotic $S^3\times [0,1]$. For $R$ a localization of $\Z$, the knots $K_0\hookrightarrow S^3\times\{0\}$ and $K_1\hookrightarrow S^3\times\{1\}$ are $R$--concordant if they cobound a smooth, properly embedded annulus in a smooth $R$--homology cobordism from $S^3\times\{0\}$ to $S^3\times\{1\}$; $\C_R$ denotes the group of knots up to $R$--concordance. If the smooth 4--dimensional Poincar\'{e} Conjecture is true then $\Cex = \C$~\cite[Proposition 3.2]{CDR14}. In fact, for odd $n$, it is unknown whether $\C_{\Z[1/n]}=\C$. ($\C_{\Z[1/2]}$ is distinct from $\C$, since the figure eight knot is slice in a $\Z[\frac{1}{2}]$--homology ball, but not slice.) 

The satellite operation is well-defined on concordance classes of each of the types mentioned above, that is, any satellite operator $P$ gives a function, which we also refer to as a satellite operator, 
\begin{align*}
P:\C_*&\to\C_*\\
[K]&\mapsto[P(K)]
\end{align*}
for $*\in\{\varnothing, \ex,\top,R\}$ for $R$ any localization of $\Z$, where $[\,\cdot\,]$ denotes the relevant concordance class. This fact has been used to construct knots which yield distinct concordance classes but which cannot be distinguished between by many classical invariants. Examples of this philosophy in action can be found in \cite{CT04, CHL11}. In \cite{CFHeHo11}, winding number one satellite operators are used to construct non-concordant knots which have homology cobordant zero surgery manifolds. In the more general context of 3-- and 4--manifold topology, satellite operations were used in~\cite{H08} to modify a 3--manifold while fixing its homology type. Winding number one satellite operators, which are of particular interest in this paper, are related to Mazur 4--manifolds \cite{AkKir79} and Akbulut corks \cite{Ak91}. In \cite{AkYas08} it was shown that changing the attaching curve of a 2--handle in a handlebody description of a 4--manifold by a winding number one satellite operation can change the diffeomorphism type while fixing the homeomorphism type.

As a result, there has been considerable interest in understanding how satellite operators act on $\C$. For example, it is a famous conjecture that the Whitehead double of a knot $K$ is smoothly slice if and only if $K$ is smoothly slice~\cite[Problem 1.38]{kirbylist}. This question might be generalized to ask if satellite operators are injective on smooth concordance classes, that is, given a satellite operator $P$, does $P(K)=P(J)$ imply $K=J$ in smooth concordance? A survey of such work on the Whitehead doubling operator may be found in \cite{HeK12}. In \cite{CHL11}, several `robust doubling operators' were introduced and some evidence was provided for their injectivity. This is the current state of knowledge in the winding number zero case. For operators with nonzero winding numbers, there has been more success as seen in the following recent theorem. 

\begin{theorem}[Theorem~5.1 of \cite{CDR14}, see Corollary~\ref{cor:injectivecorollary}]\label{thm:cdrthm} Suppose $P$ is a pattern with non-zero winding number $n$. Then
\begin{itemize}
\item [(a)]  $P:\C_{\Z[\frac{1}{n}]}\to \C_{\Z[\frac{1}{n}]}$ is injective.
\end{itemize}
Suppose that $P$ is a pattern with strong winding number $\pm 1$. Then 
\begin{itemize}
\item [(b)]  $P:\C_{ex}\to \C_{ex}$ is injective,
\item [(c)]   $P:\C_{top}\to\C_{top}$ is injective, and
\item [(d)]  if the smooth 4-dimensional Poincar\'{e} Conjecture holds, $P:\C\to\C$ is injective.
\end{itemize}
\end{theorem}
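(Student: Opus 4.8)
The plan is to obtain Theorem~\ref{thm:cdrthm} as a formal consequence of the slogan ``a group element acts injectively, since one can apply its inverse.'' Concretely, I would (i) build a group $\Chat$ of \emph{generalized satellite operators} together with an action of $\Chat$ on a set $\Shat$ of knots in homology $3$--spheres, extending the usual satellite action on $\C_*$; (ii) show that a pattern of strong winding number $\pm1$ (resp.\ a pattern of winding number $n$, after inverting $n$) becomes a \emph{unit} of $\Chat$; and (iii) show that the equivalence defining $\Shat$, restricted to knots in $S^3$, agrees with the relevant concordance relation. Then $P(K)=P(J)$ in $\C_*$ forces $[P(K)]=[P(J)]$ in $\Shat$; applying the inverse $\overline P\in\Chat$ and the action axiom together with $\overline P\ast P=\id$ forces $[K]=[J]$ in $\Shat$; and (iii) returns $[K]=[J]$ in $\C_*$.

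For (i): a generalized satellite operator is a pair $(V,\eta)$ with $V$ a compact oriented $3$--manifold having the $\Z$--homology (resp.\ $\Z[\tfrac1n]$--homology) of $S^1\times D^2$, equipped with a parametrization $\partial V\cong T^2$, and $\eta\subset V$ a knot; these are taken modulo homology concordance of pairs inside a $4$--dimensional homology $(S^1\times D^2)\times[0,1]$, with the extra requirement --- crucial for step (iii) --- that the fundamental group of the cobordism be suitably normally generated by the meridians of the knots and the peripheral tori. Composition $(V_2,\eta_2)\ast(V_1,\eta_1)$ is by excising a tubular neighborhood of $\eta_2$ from $V_2$ and regluing $(V_1,\eta_1)$ along the parametrizations; winding numbers multiply, and the unit is the $0$--framed core of $S^1\times D^2$, whose induced satellite operation is the identity. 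Ordinary patterns $P$ (with $V=S^1\times D^2$) embed as a sub-monoid on which $\ast$ restricts to composition of satellite operators, and the analogous ``tie the homology solid torus into the knot'' construction defines the action on $\Shat$. Checking that $\ast$ and the action are well defined on equivalence classes and associative is a homology--cobordism bookkeeping exercise.

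For (ii): if $P=(S^1\times D^2,\eta)$ has strong winding number $\pm1$, realize $S^1\times D^2$ as the exterior of an unknotted axis $c\subset S^3$, and let $\overline P$ be the generalized operator whose underlying homology solid torus is the exterior of $\eta$ in $S^3$, with distinguished knot $c$ (of winding number $\mathrm{lk}(c,\eta)=\pm1$). One must check $\overline P\ast P=\id=P\ast\overline P$ in $\Chat$: after regluing the two exteriors along $\partial N(\eta)$ one gets a $4$--manifold which, \emph{because} the strong winding number $\pm1$ hypothesis is precisely a normal--generation statement for $\pi_1$ of the pattern complement, is a homology $(S^1\times D^2)\times[0,1]$ with the required $\pi_1$--control --- i.e.\ the desired homology concordance of operators. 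In the winding number $n$ case one runs the same argument with $\Z[\tfrac1n]$--coefficients throughout: since $n$ is a unit in $\Z[\tfrac1n]$ the relevant homology obstructions vanish automatically, which is why part (a) needs no hypothesis beyond $n\neq 0$. The main obstacle will be exactly this construction of inverses, together with verifying that the required $\pi_1$--control survives all of the gluings; everything else is formal.

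For (iii): a knot $K\subset S^3$ determines a class in $\Shat$, and $\C_*$--concordant knots in $S^3$ are $\Shat$--equivalent since a (potentially exotic, or topological) $S^3\times[0,1]$ is in particular a $\pi_1$--controlled homology cobordism from $S^3$ to $S^3$. Conversely, $[K]=[J]$ in $\Shat$ means $K$ and $J$ cobound a (smooth, resp.\ locally flat) annulus in a homology cobordism $W$ from $S^3$ to $S^3$ carrying the $\pi_1$--control built into the definition of $\Shat$; by Freedman's work on topological $4$--manifolds (applicable precisely because of that $\pi_1$--control) $W$ is homeomorphic to $S^3\times[0,1]$, so $[K]=[J]$ in $\Ctop$, and in the smooth setting $W$ is then a potentially exotic $S^3\times[0,1]$, so $[K]=[J]$ in $\Cex$. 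This proves (c) and (b), and (d) is immediate from (b) since the smooth $4$--dimensional Poincar\'e Conjecture implies $\Cex=\C$. For part (a), step (iii) is a tautology: $\C_{\Z[\frac1n]}$ is by definition knots modulo cobounding an annulus in a $\Z[\tfrac1n]$--homology cobordism of $S^3$, which is exactly the restriction of the $\Shat$--equivalence, so injectivity of the orbit map of the unit $P\in\Chat$ on $\Shat$ restricts directly to injectivity of $P\colon\C_{\Z[\frac1n]}\to\C_{\Z[\frac1n]}$.
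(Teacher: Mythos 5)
Your overall architecture (enlarge knots to knots-in-homology-spheres, enlarge operators to something invertible acting on them, use the normal-generation hypothesis plus Freedman to pull concordance in a homology cobordism of $S^3$ back to exotic/topological concordance, then do the diagram chase) is the same as the paper's, and your step (iii) and the final chase match Proposition~\ref{prop:enlarge} and Corollary~\ref{cor:injectivecorollary}. The gap is in step (ii), which you yourself flag as the main obstacle: your explicit candidate inverse is wrong, and the role you assign to the strong winding number hypothesis there is misplaced. Concretely, for $P=(S^1\times D^2,\eta)$ you take $\overline P$ to be the exterior of $\widetilde P=\eta\subseteq S^3$ with distinguished knot the axis $c$ --- i.e.\ you swap the roles of the two components of the link $(\widetilde P,\eta)$ with \emph{no orientation reversal}. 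Test this on the connected-sum operator $Q_J$ (which has strong winding number $+1$): then $\widetilde{Q_J}=J$, $c$ is a meridian $\mu_J$ of $J$, so $\overline{Q_J}=(E(J),\mu_J)$. Composing (excise a neighborhood of $\mu_J$ from $E(J)$ and glue $(S^1\times D^2,Q_J)$ back in meridian-to-meridian, longitude-to-longitude) just refills the meridian, and the composite operator is $(E(J),\mu_J\# J)$; applied to the unknot it yields the knot $J$ in $S^3$. So the composite sends $(U,S^3)$ to $(J,S^3)$, which is not the identity in any of $\Chat_\Z$, $\Chat_\ex$, $\Chat_\top$ unless $J$ is (homology-)slice --- take $J$ the trefoil. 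The correct inverse of $Q_J$ is $Q_{-J}$, i.e.\ the \emph{reverse mirror} must enter, and normal generation of $\pi_1$ has nothing to do with making the composite trivial (it holds for every $Q_J$); in the paper the strong winding number condition is used only so that the relevant cylinders and cobordisms carry the $\pi_1$-control needed for the Freedman step, exactly as in your (iii).

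The paper avoids this problem entirely by not asking the inverse to be a satellite-type object at all: $E(P)$ is viewed as a homology cylinder over the torus, and in Levine's group of homology cylinders modulo homology cobordism the inverse of $(M,i_+,i_-)$ is simply $(-M,i_-,i_+)$ (orientation reversed, ends swapped), with $M\times[0,1]$ furnishing the cobordism from $(M,i_+,i_-)\star(-M,i_-,i_+)$ to the product cylinder --- no hypothesis beyond membership in the monoid is needed, and the group action then gives Theorem~\ref{thm:cdrthm} formally. Producing an inverse that is again (represented by) a pattern is genuinely harder: it is the content of Theorem~\ref{thm:honestinverse}, requires the extra hypothesis that $m(P)$ lie in the normal closure of $m(V)$ (not just strong winding number $\pm1$), and even then the inverse is built from the \emph{reverse mirror image} of $(\widetilde P,\eta)$ with the meridians as the new components (Proposition~\ref{prop:inverseaslink}), confirming that the mirror you omitted is essential. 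To repair your argument, either insert the orientation reversal and actually exhibit the $\pi_1$-controlled homology cobordism from the doubled pair to the trivial operator (which amounts to reproving the $M\times[0,1]$ statement in your pairs language), or pass to exteriors and quote the homology cylinder group as the paper does.
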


See Section~\ref{sec:background} for a definition of strong winding number; for the moment it suffices to know that patterns with strong winding number $\pm 1$ are plentiful. The above theorem has a number of useful corollaries (see~\cite{CDR14}) and gives us a valuable tool in studying satellite knots. We will see in Section~\ref{sec:background} that it will follow easily from the main theorem of this paper. 

Let $\S$ denote the set of all satellite operators. $\S$ has a monoid structure with respect to which the usual satellite operation is a monoid action (see Section~\ref{sec:background}). The main technical result of this paper shows that the classical satellite operation is in fact a restriction of a natural group action. Specifically, we show that satellite operators form a submonoid of the group of homology cobordism classes of homology cylinders, introduced by Levine in~\cite{Lev01},  and we show that this group has a natural action on concordance classes of knots in homology spheres which is compatible with the classical satellite operation. In other words, we prove a theorem of the following type. 

\begin{theorem_main}[See Section~\ref{sec:background} for the precise statement]
Let $*\in\{\ex,\top,R\}$ for $R$ a localization of $\Z$. For each $\C_*$, there is a particular submonoid $\S_*$ of $\S$, an enlargement $$\Psi:\C_*\into \Chat_*,$$ and a monoid morphism, $$E:\S_*\to\Shat_*,$$ where $\Shat_*$ is a group which acts on $\,\Chat_*$, such that the following diagram commutes for all $P\in \S_*$:
$$
\begin{diagram}
\node{\C_*}\arrow{e,t}{P}\arrow{s,l,J}{\Psi}\node{\C_*}\arrow{s,l,J}{\Psi}
\\
\node{\Chat_*}\arrow{e,t}{E(P)}\node{\Chat_*}
\end{diagram}
$$
\end{theorem_main}

In the above, $\S_*$ contains all strong winding number $\pm 1$ operators for $* = \ex$ or $\top$, and all winding number $\pm n$ operators when $* = \Z[\frac{1}{n}]$;   
$\Chat_*$ consists of knots in homology spheres up to concordance in the category~$*$; and $\Shat_*$ is a group of homology cobordism classes of homology cylinders. For a pattern $P\subseteq V=S^1\times D^2$, $E(P)$ is just the exterior of a regular neighborhood of $P$ in $V$. See Section~\ref{sec:background} for the precise definitions, as well as the proof of the main theorem. 

Since $E(P)$ is an element of a group acting on $\Chat_*$, $E(P):\Chat_*\to\Chat_*$ must be a bijection. This observation yields Theorem~\ref{thm:cdrthm} as a corollary, via an elementary diagram chase (Corollary~\ref{cor:injectivecorollary}).

Moreover, considering the satellite operation as a restriction of a group action provides a novel approach to the problem of finding non-trivial surjective satellite operators on $\C_*$. While it is elementary to show that satellite operators with winding number other than $\pm1$ cannot give surjections on knot concordance (see Proposition~\ref{prop:n>1}), very little is known in the case of satellite operators of winding number $\pm1$. For instance, a conjecture of Akbulut \cite[Problem 1.45]{kirbylist} that there exists a winding number one satellite operator $P$ such that $P(K)$ is not slice for any knot $K$ (that is, the unknot is not in the image of the satellite operator) remains open. 

For $*\in \{\ex, \top, R\}$  and $P$ in the monoid $\S_*$, we have shown that $E(P)$ is an element of a group, and therefore has a well-defined inverse $E(P)^{-1}$. It is of independent interest to ask whether, for a given operator $P$, the homology cylinder $E(P)^{-1}$ corresponds to a satellite operator, that is, if there exists some $\overline{P}\in\S$ such that $E(P)^{-1}=E(\overline{P})$ in $\S_*$ (we say that $\overline{P}$ is an inverse for $P$). In fact, if $P$ has an inverse $\overline{P}$, we may infer that $P$ gives a bijective function on $\C_*$ (Corollary~\ref{cor:givessurj}). In Section~\ref{sec:surjections}, we give a sufficient condition for a pattern to have an inverse, which allows us to construct a family of satellite operators which induce bijective functions on $\C_*$, as follows. Of course, it is easy to see that connected sum operators (of the form shown in Figure~\ref{fig:connected sum}) give bijective functions on $\C_*$ for each $*\in\{\ex,\top, R\}$. Consequently, when seeking examples of bijective satellite operators one should make sure that one avoids patterns that are equivalent to connected sum operators. 

\newtheorem*{thm:honestinverse}{Theorem~\ref{thm:honestinverse}}\begin{thm:honestinverse} Let $P\subseteq V=S^1\times D^2$ be a pattern with winding number $\pm 1$. If the meridian of $P$ is in the normal subgroup of $\pi_1(E(P))$ generated by the meridian of $V$, then $P$ is strong winding number $\pm 1$ and there exists another strong winding number $\pm 1$ pattern $\overline{P}$ such that $E(P)^{-1}=E(\overline{P})$ as homology cylinders.\end{thm:honestinverse}

\newtheorem*{cor:surjexample}{Corollary~\ref{cor:surjexample}}\begin{cor:surjexample} 
For each $m\geq 0$, the pattern $P_m\subseteq V(P_m)=S^1\times D^2$ shown in Figure~\ref{fig:surjex} gives a bijective map $P_m:\C_*\to \C_*$ for $*\in\{\ex,\top,\Z\}$. Moreover, each $P_m$ is distinct from all connected sum operators.\end{cor:surjexample}

\begin{figure}[b]
\includegraphics{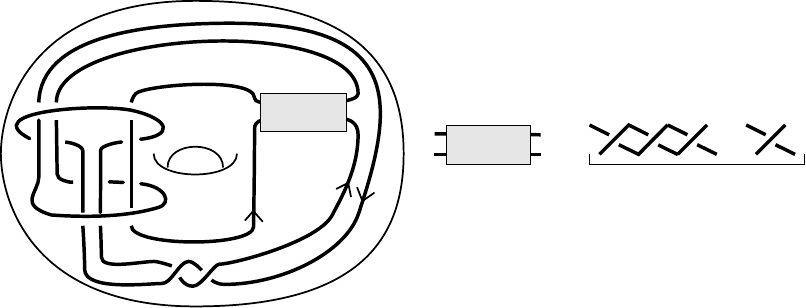}
\put(-2.1675,0.75){\tiny $2m+1$}
\put(-1.425,0.625){\tiny $2m+1$}
\put(-1,0.625){\tiny $=$}
\put(-0.85,0.5){\tiny $2m+1$ half-twists}
\caption{A class of bijective satellite operators $\{P_{m}\}_{m\geq 0}$ (see Theorem~\ref{cor:surjexample}).}\label{fig:surjex}
\end{figure}

In our final application of the main theorem, we draw a connection between the surjectivity of satellite operators, specifically Akbulut's conjecture~\cite[Problem 1.45]{kirbylist} mentioned above, and a question of Matsumoto~\cite[Problem 1.30]{kirbylist} asking if every knot in a 3--manifold homology cobordant to $S^3$ is concordant in a homology cobordism to $S^3$ to a knot in $S^3$. That is, we show the following result.

\newtheorem*{prop:conjectures}{Proposition~\ref{prop:conjectures}}\begin{prop:conjectures} Let $*\in\{\ex,\top\}$. If there exists a pattern of strong winding number $\pm 1$ such that the induced function $P: \C_*\to\C_*$ is not surjective, then there exists a knot in a 3--manifold $M$ homology cobordant to $S^3$ which is not concordant, in the category $*$, to any knot in $S^3$ in any $*$--homology cobordism from $M$ to $S^3$. \end{prop:conjectures}

\subsection*{Outline} In Section~\ref{sec:background}, we give the relevant background and prove the main theorem. Section~\ref{sec:surjections} addresses surjectivity of satellite operators. In Section~\ref{sec:kirbyconnection} we prove Proposition~\ref{prop:conjectures}.

\subsection*{Remark}Shortly after a preprint of this paper was circulated, Adam Levine proved the existence of non-surjective satellite operators (see~\cite{Lev14}).

\subsection*{Acknowledgements} 
The authors would like to thank Tim Cochran for many helpful conversations throughout this project, during part of which he was the doctoral advisor of the second author.  

\section{Background and proof of the main theorem}\label{sec:background}

\subsection{Satellite operators}\label{sec:satellites}
For a pattern $P\subseteq V=S^1\times D^2$, let $E(P)$ denote the complement of a regular neighborhood of $P$ in $V$.  There are four important (oriented) curves on the boundary of $E(P)$:
\begin{enumerate}
\item $m(P)$, the \textit{meridian of the pattern}, 
\item $\ell(P)$, the \textit{longitude of the pattern}, 
\item $m(V) = m(V(P)) =1\times \bdry D^2$, the \textit{meridian of the solid torus} and 
\item $\ell(V) = \ell(V(P)) = S^1\times 1$, the \textit{longitude of the solid torus}.
\end{enumerate}  

Here $\ell(P)$ is the pushoff (unique up to isotopy) of $P$ in $V$ which is homologous in $E(P)$ to a multiple of $\ell(V)$. We say that $P$ has \textit{winding number} $w=w(P)\in \Z$ if $\ell(P)$ is homologous to $w\cdot \ell(V)$.  Consistent with this definition, on the torus $T=S^1\times S^1$, we will call the curve $\ell = S^1\times\{1\}$ the longitude of $T$, and the curve $m = \{1\}\times S^1$ the meridian of $T$. Let $\S$ denote the set of all satellite operators up to isotopy. 

\begin{remark}\label{rem:link}
Given a satellite operator $P\subseteq V$ one gets a knot $\widetilde P\subseteq S^3$ by adding a 2--handle to $V$ along $\ell(V)$, followed by a 3--handle along the resulting 2--sphere boundary.  Let $\eta$ be the image of $m(V)$ after this handle addition. It is straightforward to see that the map $P\mapsto (\widetilde{P}, \eta)$ gives a bijection from $\S$ to the set of (ordered, oriented) 2--component links in $S^3$ whose second component is unknotted, since given such a link we can remove a tubular neighborhood of the second component to get a knot (the first component) in a solid torus, namely a satellite operator.  The winding number of $P$ is the same as the linking number between $\widetilde P$ and $\eta$, and $E(P)$ is the complement of the link $(\widetilde{P}, \eta)$ in $S^3$; see Figure~\ref{fig:patterntolink}. \end{remark}

\begin{figure}[t]
\includegraphics[width=3.5in]{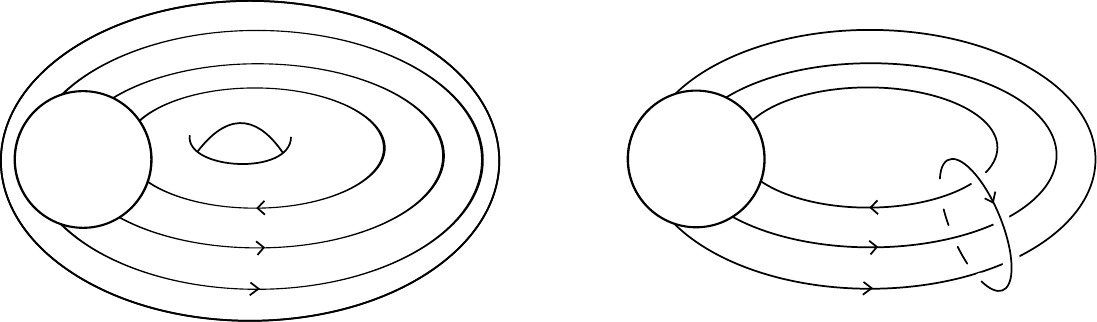}
\put(-3.325,0.475){$\mathscr{P}$}
\put(-1.375,0.475){$\mathscr{P}$}
\put(-0.25,0.1){$\eta$}
\put(-0,0.35){$\widetilde{P}$}
\put(-2.8,-0.2){(a)}
\put(-0.825,-0.2){(b)}
\caption{(a) A schematic picture of a satellite operator $P$. The circle containing $\mathscr{P}$ denotes a tangle. (b) The 2--component link corresponding to the satellite operator $P$. The circle containing $\mathscr{P}$ denotes the same tangle as in the previous panel.}\label{fig:patterntolink}
\end{figure}

The set $\S$ has a natural monoid structure. Given two satellite operators $P\subseteq V(P)$ and $Q\subseteq V(Q)$, where $V(P)$ and $V(Q)$ are standard solid tori, we construct the composed satellite operator $P\star Q$ as follows. Glue $E(Q)$ and $V(P)$ together by identifying $\bdry E(Q)$ with $\bdry V(P)$ via $m(Q)\sim m(V(P))$ and $\ell(Q)\sim\ell(V(P))$. The product, $P\star Q$ is the image of $Q$ after this identification and the resulting manifold is still a solid torus $V(P\star Q)$ (see Figure~\ref{fig:P*Q}). The operation $\star$ is clearly associative, i.e.\ $P\star (Q\star S) = (P\star Q)\star S$, and the monoid identity is given by the core of the solid torus, namely the \textit{trivial} satellite operator. 

\begin{figure}[b]
\begin{picture}(4.75,1)
\put(0,0.05){\includegraphics{pattern.pdf}}
\put(0.7,-0.1){$P$}
\put(1.65,0.05){\includegraphics{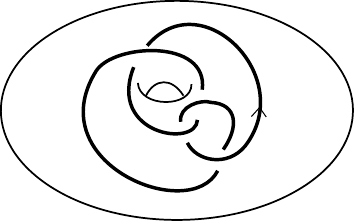}}
\put(2.25,-0.1){$Q$}
\put(3.25,0.05){\includegraphics{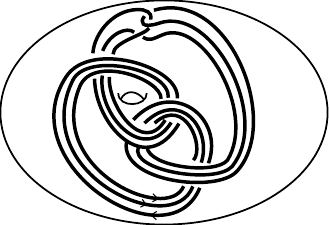}}
\put(3.725,-0.1){$P\star Q$}
\end{picture}
\caption{The monoid operation on patterns.}\label{fig:P*Q}
\end{figure}

Satellite operators act on knots in $S^3$ as we described in Figure~\ref{fig:satelliteoperation}. To obtain $P(K)$ from a pattern $P\subseteq V$ and a knot  $K\subseteq S^3$, start with the knot complement $E(K)$. The toral boundary contains the oriented curves $\ell(K)$, the \textit{longitude of $K$}, and $m(K)$, the \textit{meridian of $K$}. Glue in $V(P)$ by identifying $\ell(V)\sim \ell(K)$ and $m(V)\sim m(K)$. The resulting 3--manifold is $S^3$ and the image of $P$ is the \textit{satellite knot} $P(K)$.  For further details see~\cite[p. 111]{Ro90}.

Let $\K$ denote the set of knots in $S^3$ modulo isotopy. For patterns $P$ and $Q$ and a knot $K$, we easily see that $(P\star Q)(K) = P(Q(K))$.  Therefore, we have a monoid homomorphism $\S\to \maps(\K,\K)$, that is, a monoid action on the set of isotopy classes of knots in $S^3$. The following proposition shows that $\S$ is far from being a group under the operation $\star$. 

\begin{proposition}\label{prop:noinverse}The only element of $\S$ which has an inverse under the operation $\star$ is the trivial operator given by the core of the solid torus. \end{proposition}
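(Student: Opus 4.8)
The plan is to compute the exterior of the composite $P\star Q$ when $Q$ is an inverse of $P$, to show that this exterior must be $T^2\times[0,1]$, and to deduce from this that $P$ — and hence also $Q$ — is the core of its solid torus.

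Suppose $P$ has an inverse $Q$, so in particular $P\star Q$ is the trivial operator. Winding number is multiplicative under $\star$ and the trivial operator has winding number $1$, so $w(P)\,w(Q)=w(P\star Q)=1$; in particular both $w(P)$ and $w(Q)$ are nonzero. Next I would unwind the definition of $\star$: the solid torus $V(P\star Q)$ is built by gluing $V(P)$ along $\partial V(P)$ to the component $\partial\nu(Q)$ of $\partial E(Q)$, with $P\star Q$ the resulting copy of $P$ inside $V(P)\subseteq V(P\star Q)$; since a regular neighbourhood of $P\star Q$ lies in $V(P)$, this yields
$$E(P\star Q)\;=\;E(P)\cup_T E(Q),\qquad T:=\partial V(P)=\partial\nu(Q).$$
On the other hand the exterior of the trivial operator is the complement in a solid torus of a neighbourhood of its core, namely the region between two concentric solid tori, which is $T^2\times[0,1]$. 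Hence $P\star Q$ trivial forces $E(P)\cup_T E(Q)\cong T^2\times[0,1]$, with $T$ a two--sided torus in the interior separating this product into the pieces $E(P)$ and $E(Q)$.

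The heart of the argument is to show $E(P)\cong T^2\times[0,1]$. Because $w(P),w(Q)\neq 0$, the patterns $P$ and $Q$ are geometrically essential in their solid tori (not contained in $3$--balls), and the standard facts about knots in solid tori give that $E(P)$ and $E(Q)$ are irreducible with incompressible boundary; in particular $T$ is incompressible in $E(P)$ and in $E(Q)$. A standard innermost--disk argument then shows that $T$ is incompressible in $E(P)\cup_T E(Q)$. Since every incompressible torus in $T^2\times[0,1]$ is boundary--parallel, $T$ cuts the product into two pieces, each homeomorphic to $T^2\times[0,1]$; as $T$ separates $E(P\star Q)$ into $E(P)$ and $E(Q)$, both are homeomorphic to $T^2\times[0,1]$.

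Finally I would argue that a pattern with product exterior is trivial: if $E(P)\cong T^2\times[0,1]$, then $V(P)=E(P)\cup_{\partial\nu(P)}\nu(P)$ exhibits $V(P)$ as a solid torus containing the knot $P$ with product exterior, and a knot in a solid torus whose exterior is a product is isotopic to the core. Thus $P$ is the trivial operator, and likewise $Q$; conversely the trivial operator is its own inverse, so it is the unique invertible element. I expect the middle step to be the main obstacle — verifying carefully that $T$ is incompressible in each of the relevant manifolds and correctly invoking the classification of incompressible tori in $T^2\times[0,1]$ — while the remaining steps amount to bookkeeping around the definition of $\star$.
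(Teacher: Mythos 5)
Your proof is correct, but it takes a genuinely different route from the paper's. The paper argues via classical invariants: Schubert's inequality $\br(P(K))\geq n\cdot\br(K)$ for the bridge index forces an invertible operator to have geometric winding number one, hence to be a connected-sum operator $Q_J$, and additivity of genus then forces $J$ to be unknotted. You instead work with exteriors: the identity $E(P\star Q)=E(P)\cup_T E(Q)$ (which is exactly the statement, exploited later in the paper, that $E$ is a monoid homomorphism into homology cylinders), the fact that $w(P),w(Q)\neq 0$ makes $E(P)$ and $E(Q)$ irreducible with incompressible boundary, incompressibility of the gluing torus $T$ in the union, and the classification of incompressible tori in $T^2\times[0,1]$ as boundary-parallel, concluding that $E(P)\cong T^2\times[0,1]$ and hence that $P$ is a core (and likewise $Q$). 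All the facts you invoke are standard and correctly applied, and your reading of the definition of $\star$ (the composite pattern is the copy of $P$ in $V(P)\subseteq V(P)\cup_T E(Q)$, so that $(P\star Q)(K)=P(Q(K))$) is the right one. What the two approaches buy: the paper's proof is shorter and stays at the level of knot invariants, but it quietly needs the small extra observation that geometric winding number zero is also impossible; your proof is pure 3--manifold topology, proves the slightly stronger statement that an invertible pattern has product exterior, and sits naturally in the homology-cylinder framework of Section~\ref{sec:background} -- it essentially shows that the only satellite operators mapping to the identity cylinder in a "geometric" way are trivial, at the cost of invoking heavier machinery (incompressibility arguments and Waldhausen-type classification of tori in $T^2\times[0,1]$) in place of Schubert's theorem.
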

\begin{proof}We see this using the notion of \textit{bridge index} $\br(K)$ of a knot $K$~\cite[p. 114]{Ro90}. Suppose $P\subseteq V$ has an inverse denoted $P^{-1}$. For any satellite knot $P(K)$, we know from \cite{Schu54} that $\br(P(K))\geq n\cdot\br(K)$ where $n$ is the \textit{geometric winding number} of $P$, i.e.\ the minimal (unsigned) number of intersections between $\ell(P)$ and a meridional disk of $V$. Therefore, for any operator $P$ and knot $K$, $\br(P(K))\geq \br(K)$ and so,
$$\br(K)=\br((P^{-1}\star P)(K))=\br(P^{-1}(P(K))) \geq \br(P(K))\geq \br(K).$$
Thus $\br(P(K))=\br(K)$ and $P$ has geometric winding number one. This implies that $P$ is a \textit{connected sum operator}, i.e.\ $P=Q_J$ for some knot $J$ as shown in Figure~\ref{fig:connected sum}. However, connected sum with a non-trivial knot cannot have an inverse due to the additivity of genus. Therefore, $P=Q_U$ where $U$ is the unknot. This completes the proof since $Q_U$ is the trivial satellite operator.\end{proof}

\begin{figure}[b]
\includegraphics{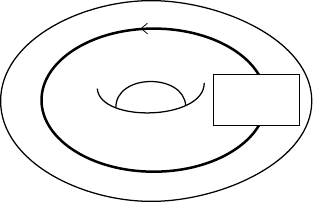}
\put(-0.28,0.375){$J$}
\caption{The connected sum operator $Q_J$ corresponding to the knot $J$. A strand going up through the box marked $J$ has the knot type of $J$. }\label{fig:connected sum}
\end{figure}

The following submonoids of $\S$ will be of particular interest in this paper.

\begin{definition}\label{def:Smonoids}Let $P$ be a pattern and $\Z\subseteq R\subseteq\Q$ a localization of $\Z$.
\begin{enumerate}
\item[(a)] $P$ is said to lie in $\S_R$ if $w(P)$ is invertible in $R$, that is, $\dfrac{1}{w(P)}\in R$.
\item[(b)]\label{def:SW1} $P$ is said to lie in $\Sstr$ (and have \textit{strong winding number $\pm 1$}) if $w(P)=\pm1$ and each of the sets $\{m(V),\ell(V)\}$ and $\{m(P),\ell(P)\}$ normally generates $\pi_1(E(P))$.
\end{enumerate}
\end{definition}

Recall from Remark~\ref{rem:link} that one can obtain a knot $\widetilde{P}$ from a pattern $P\subseteq V=S^1\times D^2$ by adding a 3--dimensional 2--handle to $V$ along $\ell(V)$ and then a 3--dimensional 3--handle along the resulting 2--sphere boundary (in fact $\widetilde{P}=P(U)$, where $U$ is the unknot). In~\cite{CDR14}, a pattern $P$ was said to have strong winding number $\pm 1$ if $w(P)=\pm1$ and $m(V)$ normally generates $\pi_1(S^3-\widetilde{P})$. Our definition is equivalent to the definition in~\cite{CDR14} as follows. 

\begin{proposition}\label{Prop: strong winding number 1}For a winding number $\pm 1$ satellite operator $P\subseteq V=S^1\times D^2$, $m(V)$ normally generates $\pi_1(S^3-\widetilde{P})$ if and only if each of the sets $\{m(V),\ell(V)\}$ and $\{m(P),\ell(P)\}$ normally generates $\pi_1(E(P))$.\end{proposition}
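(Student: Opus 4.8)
The plan is to relate the fundamental group $\pi_1(S^3 - \widetilde{P})$ to $\pi_1(E(P))$ via the handle additions described in Remark~\ref{rem:link}. Recall that $S^3 - \widetilde{P}$ is obtained from $E(P)$ by gluing a solid torus to $\partial E(P)$ along the curve $\ell(V)$ (the 2--handle fills $\ell(V)$, and the 3--handle contributes nothing to $\pi_1$). So by van Kampen, $\pi_1(S^3 - \widetilde{P}) \cong \pi_1(E(P))/\lang\langle \ell(V)\rangle\rangle$, the quotient by the normal closure of $\ell(V)$. The curves $m(P)$, $\ell(P)$, and $m(V)$ all survive to curves in $S^3 - \widetilde{P}$; under the quotient map, $m(V)$ maps to the meridian $\eta$ (using Remark~\ref{rem:link}'s identification), and $m(P)$ maps to the meridian of $\widetilde{P}$, which always normally generates $\pi_1(S^3 - \widetilde{P})$ since $\widetilde P$ is a knot in $S^3$.

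First I would set up notation: write $G = \pi_1(E(P))$, and let $N = \lang\langle \ell(V)\rangle\rangle_G$, so $\pi_1(S^3 - \widetilde{P}) = G/N$. For the forward direction, suppose $m(V)$ normally generates $G/N$. I need to show $\{m(V), \ell(V)\}$ normally generates $G$: but this is immediate, since the preimage in $G$ of $\langle\langle m(V)\rangle\rangle_{G/N}$ is $\langle\langle m(V), \ell(V)\rangle\rangle_G$, and the hypothesis says this preimage is all of $G$. For the other set, note that $\ell(P)$ is homologous in $E(P)$ to $\pm\ell(V)$ (winding number $\pm 1$); more is true homotopically after I use the half-lives-half-dies / linking-form structure of the torus boundary — I expect one can show $\langle\langle m(P), \ell(P)\rangle\rangle_G$ and $\langle\langle m(V), \ell(V)\rangle\rangle_G$ coincide, because the four boundary curves pairwise generate $\pi_1(\partial E(P)) = \Z^2$ appropriately and $\ell(P)$, $\ell(V)$ differ by something in the commutator-type subgroup. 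Actually the cleanest route: $m(P)$ normally generates $G/N$ (meridian of a knot in $S^3$), so $\langle\langle m(P), \ell(V)\rangle\rangle_G = G$; then I want to trade $\ell(V)$ for $\ell(P)$. Since $w(P) = \pm1$, in $H_1(E(P))$ we have $[\ell(P)] = \pm[\ell(V)]$, and in fact $\ell(P)$ is isotopic in $V$ to a pushoff of $P$, so on $\partial E(P)$ the curve $\ell(P)$ together with $m(P)$ forms a basis, as does $\ell(V)$ together with $m(V)$; the gluing relating these two bases is governed by a matrix in $GL_2(\Z)$ whose off-diagonal behavior lets me express $\ell(V)$ in terms of $\ell(P)$, $m(P)$ up to conjugation — hence $\langle\langle m(P), \ell(P)\rangle\rangle_G = \langle\langle m(P), \ell(V)\rangle\rangle_G = G$.

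For the reverse direction, suppose both $\{m(V), \ell(V)\}$ and $\{m(P), \ell(P)\}$ normally generate $G$. From the first set, $\langle\langle m(V), \ell(V)\rangle\rangle_G = G$, so modulo $N = \langle\langle \ell(V)\rangle\rangle_G$ we get $\langle\langle m(V)\rangle\rangle_{G/N} = G/N$, which is exactly the statement that $m(V)$ normally generates $\pi_1(S^3 - \widetilde{P})$. (The hypothesis on $\{m(P), \ell(P)\}$ is not needed for this direction, which is consistent with the asymmetry in the definitions — though one should double-check that the paper's $\Sstr$ really does require both, and present the reverse implication accordingly.)

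The main obstacle I anticipate is the bookkeeping in the forward direction showing $\langle\langle m(P), \ell(P)\rangle\rangle_G = \langle\langle m(V), \ell(V)\rangle\rangle_G$ — i.e., being careful that "winding number $\pm 1$" gives enough control on the boundary gluing to trade longitudes. The subtlety is that $\ell(P)$ and $\ell(V)$ are homologous but a priori not conjugate in $G$; what saves us is that they differ by curves supported near $\partial E(P)$ combined with the fact that $m(P)$ already normally generates $G/N$, so one only needs the weaker statement that $\ell(V) \in \langle\langle m(P), \ell(P)\rangle\rangle_G$, which should follow from expressing $\ell(V)$ on the torus $\partial E(P)$ in terms of the basis $\{m(P), \ell(P)\}$ (valid since $w(P)=\pm1$ forces the relevant determinant to be $\pm 1$) and noting each basis element lies in the normal subgroup. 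I would present this as a short lemma about normal closures of boundary curves before assembling the two implications.
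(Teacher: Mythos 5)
Your setup (Van Kampen gives $\pi_1(S^3-\widetilde{P})\cong \pi_1(E(P))/\langle\langle \ell(V)\rangle\rangle$), your reverse direction, and the first half of your forward direction (the preimage of $\langle\langle m(V)\rangle\rangle_{G/N}$ is $\langle\langle m(V),\ell(V)\rangle\rangle_G$) are correct and match the paper. The gap is in the remaining claim of the forward direction, that $\{m(P),\ell(P)\}$ normally generates $G=\pi_1(E(P))$. The route you sketch --- expressing $\ell(V)$ in terms of the ``basis'' $\{m(P),\ell(P)\}$ of $\partial E(P)$ via a $GL_2(\Z)$ change of basis, or asserting that the four curves generate $\pi_1(\partial E(P))=\Z^2$ --- rests on a false picture: $E(P)$ is the exterior of a two-component link, so $\partial E(P)$ is a \emph{disjoint union of two tori}; $m(V),\ell(V)$ lie on one component and $m(P),\ell(P)$ on the other, and there is no boundary-level relation (no gluing matrix) expressing $\ell(V)$ as a word in $m(P),\ell(P)$. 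All you have a priori is the homological relation $[\ell(P)]=\pm[\ell(V)]$ in $H_1(E(P);\Z)$, which by itself does not let you ``trade longitudes'' inside normal closures; and the containment $\ell(V)\in\langle\langle m(P),\ell(P)\rangle\rangle_G$ that you reduce to is essentially equivalent to the statement being proved, so as written the argument is circular or incomplete.

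The fix is the paper's (much simpler) observation, which also shows your detour through $m(P)$ normally generating $G/N$ is unnecessary: Dehn filling $E(P)$ along $m(P)$ (adding a 2--handle along $m(P)$, then a 3--handle) recovers the solid torus $V$, so $G/\langle\langle m(P)\rangle\rangle\cong\pi_1(V)\cong\Z$, generated by the core. Since $w(P)=\pm 1$ and $\pi_1(V)$ is abelian, $\ell(P)$ (a pushoff of $P$) maps to $\pm$ the generator, so $G/\langle\langle m(P),\ell(P)\rangle\rangle$ is trivial. In particular $\{m(P),\ell(P)\}$ normally generates $\pi_1(E(P))$ for \emph{every} winding number $\pm 1$ pattern, with no use of the hypothesis on $m(V)$; the hypothesis is only needed for the pair $\{m(V),\ell(V)\}$, exactly as in your first step. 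With that lemma replacing your boundary-torus argument, your proof outline goes through.
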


\begin{proof}From the definition of $\widetilde{P}$, it is clear that $\pi_1(S^3-\widetilde{P}) = \pi_1(E(P))/\langle\langle\ell(V)\rangle\rangle$, where $\langle\langle\ell(V)\rangle\rangle$ is the normal subgroup generated by $\ell(V)$ in $\pi_1(E(P))$. The backward direction follows immediately, since if $\{m(V),\ell(V)\}$ normally generates $\pi_1(E(P))$, the group $\pi_1(S^3-\widetilde{P})=\pi_1(E(P))/\langle\langle\ell(V)\rangle\rangle$ must be normally generated by $m(V)$. For the forward direction, note that if we assume that $m(V)$ normally generates $\pi_1(S^3-\widetilde{P}) = \pi_1(E(P))/\langle\langle\ell(V)\rangle\rangle$, then $\{m(V),\ell(V)\}$ normally generates $\pi_1(E(P))$. In order to see that $\{m(P),\ell(P)\}$ also normally generates $\pi_1(E(P))$, notice that $V$ is obtained from $E(P)$ by adding a 2--handle along $m(P)$ (followed by a 3--handle) so that 
$$\Z\cong\pi_1(V)= \pi_1(E(P))/\langle\langle m(P)\rangle\rangle.$$
Since $P$ is winding number $\pm 1$, $\pi_1(V)$ is generated by $\ell(P)$ which is homotopic in $V$ to $\ell(V)$.  Thus $\{m(P),\ell(P)\}$ normally generates $\pi_1(E(P))$. \end{proof}

\subsection{Homology cobordism classes of homology cylinders}\label{sec:homology cylinder}

In \cite{Lev01} Levine defined the group of integral homology cylinders over a surface, with the goal of producing an enlargement of the mapping class group. For completeness, and since we require slight variants and generalizations, we recall the definitions below.

\begin{definition}[\cite{Lev01}]
Let $T=S^1\times S^1$ be the torus and $R$ a localization of $\Z$.  
An \textit{$R$--homology cylinder on $T$}, or an \textit{$R$--cylinder}, is a triple $(M,i_+,i_-)$ where 
\begin{itemize}
\item $M$ is an oriented, compact, connected 3--manifold
\item For $\epsilon=\pm1$, $i_\epsilon:T\to \bdry M$ is an embedding, with $\bdry M$ = $i_+(T) \sqcup i_-(T)$
\item  $i_+$ is orientation-preserving and $i_-$ is orientation-reversing
\item $(i_\epsilon)_*:H_*(T;R)\to H_*(M;R)$ is an isomorphism.
\end{itemize}
For two $R$--homology cylinders,  $(M,i_+,i_-)$ and $(N,j_+,j_-)$, we say that $(M,i_+,i_-) = (N,j_+,j_-)$ if there is a homeomorphism $\phi:M\to N$ such that $\phi\circ i_\epsilon = j_\epsilon$ for $\epsilon\in\{\pm1\}$.

A $\Z$--cylinder $(M,i_+,i_-)$ is called a \textit{strong cylinder} if $\pi_1(V)$ is normally generated by each of $\im(i_+)_*$ and $\im(i_-)_*$. Let $H_R$ denote the set of all $R$--cylinders and $H_\str$ denote the set of all strong cylinders.\end{definition}

For $*\in\{\str, R\}$, there is a monoid operation on $H_*$  given by stacking:  
$$(M,i_+,i_-)\star(N,j_+,j_-) = \left(\bigslant{M \sqcup N}{i_+(x)\sim j_-(x), \forall x\in T},j_+,i_-\right)$$
The identity element with respect to $\star$ is given by $(T\times [0,1],\id\times\{1\},\id\times\{0\})$. 

\begin{definition}[\cite{Lev01}]Two $R$--cylinders $(M,i_+,i_-)$ and $(N,j_+,j_-)$ are said to be \textit{$R$--cobordant} if there is a smooth 4--manifold $W$ with 
$$\bdry W = \bigslant{M\sqcup -N}{i_+(x)=j_+(x),i_-(x)=j_-(x), \forall x\in T},$$
such that $H_*(M;R) \to H_*(W;R)$ and $H_*(N;R) \to H_*(W;R)$ are isomorphisms. This is equivalent to requiring that the compositions 
$$H_*(T;R)\xrightarrow{(i_\epsilon)_*} H_*(M;R)\to H_*(W;R)$$
$$H_*(T;R)\xrightarrow{(i_\epsilon)_*} H_*(N;R)\to H_*(W;R)$$
are isomorphisms for each $\epsilon\in\{\pm1\}$. Such a $W$ is called an \textit{$R$--cobordism}. $\H_R$ denotes the set of all $R$--cobordism classes of $R$--cylinders.

Two strong cylinders $(M,i_+,i_-)$ and $(N,j_+,j_-)$ are said to be \textit{strongly cobordant} if there exists a $\Z$--cobordism $W$ between $V$ and $U$ such that $\pi_1(W)$ is normally generated by each of $\pi_1(V)$ and $\pi_1(U)$. Such a $W$ is called a \textit{strong cobordism}. This is equivalent to requiring that the images of 
$$\pi_1(T)\xrightarrow{(i_\epsilon)_*} \pi_1(M)\to \pi_1(W)$$
$$\pi_1(T)\xrightarrow{(i_\epsilon)_*} \pi_1(N)\to \pi_1(W)$$
(individually) normally generate $\pi_1(W)$ for each $\epsilon\in\{\pm1\}$. $\H_\ex$ denotes the set of all strong cobordism classes of strong cylinders.  

In the latter definition if the manifold $W$ is not required to be smooth, we say $(M,i_+,i_-)$ and $(N,j_+,j_-)$ are \textit{strongly topologically cobordant}. $\H_\top$ denotes the set of strong topological cobordism classes of strong cylinders. 
\end{definition}

In \cite{Lev01}, Levine proves that the binary operation $\star$ on $H_\Z$ is well-defined on $\H_\Z$ and endows $\H_\Z$ with the structure of a group.  Indeed, $N\times[0,1]$ can be seen to be a cobordism  between $(N,i_+,i_-)\star(-N,i_-,i_+)$ and the identity element $(T\times[0,1],\id\times\{0\},\id\times\{1\})$.  Thus, the  \textit{inverse} of $(N,i_+,i_-)$ in $\H_\Z$ is $(-N,i_-,i_+)$, where $-N$ denotes the orientation-reverse of $N$. Since $N\times[0,1]$ is a smooth $R$--cobordism
 when $N$ is an $R$--cylinder,  and  $\H_R$ is a group for all localizations $R$ of $\Z$. Similarly, if $N$ is a strong cylinder it is easy to see that $N\times[0,1]$ is a smooth and topological strong cobordism.  Thus, $\H_\ex$ and $\H_\top$ are also groups.

\subsection{Patterns as homology cylinders}\label{sec:Shat}

For any pattern $P\in \S_R$, the exterior of $P$, $E(P)$, can be seen to be an $R$--homology cylinder in a natural way.  Let $i_+$ be the identification $S^1\times S^1\to \bdry V$ sending $m\mapsto m(V)$ and $\ell\mapsto \ell(V)$.  Similarly let  $i_-$ be the identification of the boundary of a tubular neighborhood of $P$ with $S^1\times S^1$ which sends $\ell\mapsto \ell(P)$ and $m\mapsto m(P)$.  A Mayer--Vietoris argument easily reveals that $(E(P), i_+, i_-)$ is an $R$--cylinder. It follows immediately from our definitions that if $P\in \S_\str$, then $(E(P), i_+, i_-)\in H_\str$. Henceforth we will often abuse notation by letting $E(P)$ denote the $*$--cylinder $(E(P),i_+,i_-)$, where $*\in\{\str,R\}$. For each value of $*$, we have a map 
\begin{align*}
E:\S_*&\to H_*\\
P&\mapsto E(P)
\end{align*}
which is easily seen to be a monoid homomorphism.

Note that if $P$ is a pattern of winding number $w\neq 0$, then in $E(P)$, the curve $\ell(P)$ is homologous to $w\cdot\ell(V)$ and the curve $m(P)$ is homologous to $(1/w)\cdot m(V)$.  Thus, with respect to the basis $\{\ell,m\}$ for $H_1(S^1\times S^1)$, the composition $\left((i_+)_*^{-1}\circ (i_-)_*\right)$ is given by the matrix 
$
\left[
\begin{array}{cc}
w&0\\0&1/w
\end{array}
\right].
$
This motivates the following definition.

\begin{definition}Let $R$ be a localization of $\Z$. We define $\Shat^0_R\subseteq H_R$ to be the submonoid of all $R$--cylinders $(M,i_+,i_-)$ for which the map $(i_+)_*^{-1}\circ (i_-)_*:H_1(T;R)\to H_1(T;R)$  has determinant one and is diagonal with respect to the basis $\{\ell, m\}$. Similarly, $\Shat^0_\str\subseteq H_\str$ is the submonoid of all strong cylinders $(M,i_+,i_-)$ for which the map $(i_+)_*^{-1}\circ (i_-)_*:H_1(T;\Z)\to H_1(T;\Z)$ is $\pm\id$. Elements of $\Shat^0_*$ will be called \textit{generalized $*$--satellite operators}, for $*\in\{\str, R\}$.
\end{definition}

As a result of our previous discussion, we see the following result.

\begin{proposition}\label{prop:monoidmorphism1}For each $* \in\{\str, R\}$ there is a monoid homomorphism $$E:\S_*\to \Shat^0_*.$$\end{proposition}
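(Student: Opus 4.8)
The plan is to reduce the statement to two things already in hand: the remark in Section~\ref{sec:Shat} that $E\colon\S_*\to H_*$ is a monoid homomorphism, and the explicit matrix for the gluing automorphism $(i_+)_*^{-1}\circ(i_-)_*$ computed in the paragraph preceding the definition of $\Shat^0_*$. Since $\Shat^0_*$ is \emph{defined} as a submonoid of $H_*$, it suffices to show that the image of $E$ is contained in $\Shat^0_*$; the corestriction of a monoid homomorphism to a submonoid containing its image is again a monoid homomorphism, so this will complete the proof.

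First I would recall why $E\colon\S_*\to H_*$ is a monoid homomorphism, which is asserted in Section~\ref{sec:Shat} but worth spelling out. For $P\in\S_R$ the Mayer--Vietoris argument quoted there shows $(E(P),i_+,i_-)\in H_R$; if moreover $P\in\Sstr$, then the two normal-generation conditions in Definition~\ref{def:Smonoids}(b) are exactly the statements that $\im(i_+)_*$ and $\im(i_-)_*$ each normally generate $\pi_1(E(P))$, so $E(P)\in H_\str$. That $E(P\star Q)=E(P)\star E(Q)$ is seen by unwinding the pattern product: $V(P\star Q)$ is built by gluing $V(P)$ onto the $i_-$--boundary of $E(Q)$, so removing $N(P\star Q)$, which is just $N(P)\subseteq V(P)$, gives $E(P\star Q)=E(P)\cup_{\bdry V(P)}E(Q)$ with boundary parametrizations agreeing with the stacking operation on $H_*$; the trivial operator has $E=T\times[0,1]$, the identity cylinder.

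Next I would verify the image condition. Let $P\in\S_*$, so in particular $w=w(P)\neq 0$. In $H_1(E(P);R)$ one has $\ell(P)\sim w\cdot\ell(V)$ by the definition of the winding number, and $m(P)\sim(1/w)\cdot m(V)$ since a meridional disk of $V$ hits $P$ in $w$ points algebraically. Hence, with respect to the basis $\{\ell,m\}$ of $H_1(T;R)$, the automorphism $(i_+)_*^{-1}\circ(i_-)_*$ is diagonal with diagonal entries $w$ and $1/w$, so it has determinant $1$. If $*=R$, then $P\in\S_R$ forces $1/w\in R$, so this is a bona fide diagonal determinant-one automorphism of $H_1(T;R)$ and $E(P)\in\Shat^0_R$. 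If $*=\str$, then $P\in\Sstr$ forces $w=\pm1$, so the automorphism is $\pm\id$; combined with $E(P)\in H_\str$ from the previous step, this gives $E(P)\in\Shat^0_\str$. Therefore $E$ corestricts to a map $\S_*\to\Shat^0_*$, which by the first paragraph is a monoid homomorphism.

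I do not expect any real obstacle: the proposition is bookkeeping. The only point requiring a moment's care is the homology computation $m(P)\sim(1/w)m(V)$ --- it must be read over the ring $R$, which is legitimate precisely because membership in $\S_*$ forces $w$ to be invertible in $R$ (or $w=\pm1$ in the strong case), and one must keep track of the ordering of the basis $\{\ell,m\}$ so as to land in the diagonal --- and this is exactly the content of the displayed matrix just before the definition of $\Shat^0_*$.
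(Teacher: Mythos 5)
Your proposal is correct and follows the same route as the paper: the paper's proof of this proposition is simply ``as a result of our previous discussion,'' meaning the Mayer--Vietoris argument that $(E(P),i_+,i_-)$ is a $*$--cylinder, the observation that $E$ respects $\star$, and the computation of $(i_+)_*^{-1}\circ(i_-)_*$ as $\mathrm{diag}(w,1/w)$, all of which you spell out. Your corestriction argument and the case analysis ($1/w\in R$ for $*=R$, $w=\pm1$ giving $\pm\id$ for $*=\str$) are exactly the intended bookkeeping.
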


Moreover, the submonoids $\Shat^0_*$ of $H_*$ are closed under the map $(N,i_+,i_-)\mapsto (-N, i_-, i_+)$ sending a $*$--cylinder to its inverse in $\H_*$. Therefore, we define 
\begin{align*}
\Shat_\ex :=& \,\,\bigslant{\Shat^0_\str}{\text{strong cobordism}} &\Shat_\top :=& \,\,\bigslant{\Shat^0_\str}{\text{strong topological cobordism}}\\
\Shat_R :=& \,\,\bigslant{\Shat^0_R}{R\text{--cobordism}} &
\end{align*}
and see that $\Shat_\ex$, $\Shat_\top$, and $\Shat_R$ are subgroups of $\H_\ex$, $\H_\top$, and $\H_R$ respectively.

From the above, using the monoid morphisms $E$ from Proposition~\ref{prop:monoidmorphism1}, we obtain the following proposition.

\begin{proposition}\label{prop:homomorphism}There are monoid homomorphisms, 
$$E:\S_R\to \Shat_R,\, E:\S_\str\to \Shat_\ex\text{, and } \S_\str\to \Shat_\top.$$
\end{proposition}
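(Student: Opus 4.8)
The plan is to observe that Proposition~\ref{prop:homomorphism} is almost immediate from the machinery already assembled, and to spell out the composition carefully. First I would recall that Proposition~\ref{prop:monoidmorphism1} gives, for each $*\in\{\str,R\}$, a monoid homomorphism $E:\S_*\to\Shat^0_*$; this is the content of the paragraph preceding it, where one checks that winding-number-$w$ patterns land in the diagonal-determinant-one submonoid (and strong winding number $\pm1$ patterns land in $\Shat^0_\str$). So the only remaining ingredient is a monoid homomorphism $\Shat^0_*\to\Shat_*$, which is simply the quotient map by the appropriate cobordism relation.

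Concretely, I would argue as follows. The set $\Shat_R$ is defined as $\Shat^0_R/(R\text{--cobordism})$, and the stacking operation $\star$ descends to this quotient because $\star$ is well-defined on $\H_R$ (Levine's result, cited in the excerpt) and $\Shat^0_R$ is a $\star$-submonoid of $H_R$; hence the projection $q_R:\Shat^0_R\to\Shat_R$ is a monoid homomorphism. Composing, $E:=q_R\circ E:\S_R\to\Shat_R$ is a monoid homomorphism. The same argument with ``$R$--cobordism'' replaced by ``strong cobordism'' gives $q_\ex:\Shat^0_\str\to\Shat_\ex$, hence $E:\S_\str\to\Shat_\ex$; and with ``strong topological cobordism'' it gives $q_\top:\Shat^0_\str\to\Shat_\top$, hence $E:\S_\str\to\Shat_\top$. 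One should note here that the source monoid in the latter two cases is the same, $\S_\str$, because $q_\ex$ and $q_\top$ both have domain $\Shat^0_\str$; the two maps $E$ differ only in their (coarser) targets, and in fact the topological one factors through the exotic one.

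The only point requiring a word of care, and the step I would flag as the ``main obstacle'' (though it is genuinely routine), is verifying that each equivalence relation used to form $\Shat_*$ is a congruence with respect to $\star$, i.e.\ that if $M\sim M'$ and $N\sim N'$ then $M\star N\sim M'\star N'$. This is exactly the compatibility Levine establishes for $\H_\Z$ and which the excerpt has already noted extends to $\H_R$, $\H_\ex$, and $\H_\top$ (the relevant cobordisms are built by gluing the cobordisms witnessing $M\sim M'$ and $N\sim N'$ along a copy of $T\times[0,1]$, and a Mayer--Vietoris computation confirms the homology or $\pi_1$-normal-generation condition is preserved). Since $\Shat^0_*$ is closed under $\star$ inside $H_*$, restricting this congruence and passing to the quotient is immediate, and the composite of two monoid homomorphisms is a monoid homomorphism. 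This completes the proof.
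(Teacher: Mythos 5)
Your proposal is correct and follows essentially the same route as the paper: the paper obtains Proposition~\ref{prop:homomorphism} by composing the monoid morphisms $E:\S_*\to\Shat^0_*$ of Proposition~\ref{prop:monoidmorphism1} with the quotient maps onto $\Shat_\ex$, $\Shat_\top$, and $\Shat_R$, relying on the already-noted fact (from Levine's work and the preceding discussion) that $\star$ descends to these cobordism quotients. Your extra care in checking that each cobordism relation is a congruence for $\star$ is exactly the implicit content the paper leaves to the earlier discussion of $\H_R$, $\H_\ex$, and $\H_\top$ being groups.
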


\subsection{Generalizations of  knot concordance}\label{sec:generalizedconcordance}

\begin{definition}\label{def:genconcdefn}
Let $K$ and $J$ be knots in the $\Z$--homology spheres $X$ and $Y$ respectively. $(K,X)$ and $(J,\,Y)$ are called \textit{exotically concordant} (resp.\ \textit{topologically concordant}) if there is a smooth (resp.\ topological) $\Z$--homology cobordism $W$ from $X$ to $Y$ with $\pi_1(W)$ normally generated by the images of each of $\pi_1(X)$ and $\pi_1(Y)$ and in which $K$ and $J$ cobound a smooth (resp.\ locally flat) annulus.  The set of all knots in $\Z$--homology spheres modulo exotic concordance (resp.\ topological concordance) is denoted by $\Chat_\ex$ (resp.\ $\Chat_\top$).

Now suppose that $R$ is a localization of $\Z$, and $K$ and $J$ are knots in the $R$--homology spheres $X$ and $Y$. $(K,X)$ and $(J,\,Y)$ are called \textit{$R$--concordant} if there is a smooth $R$--homology cobordism $W$ from $X$ to $Y$ in which $K$ and $J$ cobound a smooth annulus. We denote by $\Chat_R$ the set of all knots in $R$--homology spheres modulo $R$--concordance. \end{definition}

\begin{proposition}\label{prop:enlarge} The map 
\begin{align*}
\Psi:\C_*&\to \Chat_*\\
[K]&\mapsto[(K, S^3)]
\end{align*}
is well-defined and injective, for each value of $*\in\{ \ex,\top, R \}$.\end{proposition}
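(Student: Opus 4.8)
The plan is to prove well-definedness and injectivity separately, and for both to reduce to standard facts about concordance in $S^3\times[0,1]$ versus concordance in a homology cobordism. For well-definedness, I first need to check that if $K_0$ and $K_1$ are concordant in the category $*$ (in the original sense, via an annulus in $S^3\times[0,1]$, or in an $R$--homology cobordism of $S^3$), then $(K_0,S^3)$ and $(K_1,S^3)$ are equivalent in $\Chat_*$. When $*=R$ this is essentially a tautology: an $R$--homology cobordism from $S^3$ to $S^3$ containing a concordance annulus is exactly the data required by Definition~\ref{def:genconcdefn}. When $*=\ex$ or $\top$, the only extra point is the $\pi_1$--normal-generation condition: but $S^3\times[0,1]$ (and any smooth manifold homeomorphic to it) is simply connected, so $\pi_1(W)$ is trivial and the normal generation condition is vacuous. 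So well-definedness is immediate from unwinding the definitions.

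For injectivity, I would argue as follows. Suppose $(K_0,S^3)$ and $(K_1,S^3)$ are equivalent in $\Chat_*$, via a homology cobordism $W$ from $S^3$ to $S^3$ (with the relevant $\pi_1$ condition when $*=\ex,\top$) containing a properly embedded annulus $A$ cobounded by $K_0$ and $K_1$. I must produce an honest concordance in the category $*$, i.e.\ I need $W$ to actually be homeomorphic (resp.\ diffeomorphic, resp.\ an $R$--homology cobordism in the prescribed sense) to $S^3\times[0,1]$. For $*=R$ there is nothing to do: $W$ is by hypothesis a smooth $R$--homology cobordism from $S^3$ to $S^3$, which is exactly what $R$--concordance in $\C_R$ requires, so $[K_0]=[K_1]$ in $\C_R$. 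For $*=\top$: $W$ is a topological $\Z$--homology cobordism from $S^3$ to $S^3$ with $\pi_1(W)$ normally generated by $\pi_1(S^3)=1$, hence $\pi_1(W)=1$; a simply-connected $\Z$--homology cobordism between copies of $S^3$ is a homotopy $S^3\times[0,1]$, and by the topological $4$--dimensional Poincar\'e/$h$--cobordism theorem (Freedman) it is homeomorphic to $S^3\times[0,1]$, so the annulus $A$ sits inside a manifold homeomorphic to $S^3\times[0,1]$ and $[K_0]=[K_1]$ in $\Ctop$. The case $*=\ex$ is identical except one only concludes $W$ is \emph{smooth} and homeomorphic to $S^3\times[0,1]$, which is precisely the definition of exotic concordance — no smooth $h$--cobordism theorem is needed or claimed.

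The only real content is the topological $h$--cobordism step in the $*=\top$ and $*=\ex$ cases: one must justify that a compact simply-connected topological $4$--manifold $W$ with $\partial W = S^3 \sqcup -S^3$ and $H_*(S^3)\xrightarrow{\sim} H_*(W)$ is homeomorphic to $S^3\times[0,1]$. This follows by capping both boundary $3$--spheres with $4$--balls to get a closed simply-connected $4$--manifold which is an integral homology $S^4$, hence homeomorphic to $S^4$ by Freedman, and then observing the two capping balls are unknotted so their complement is $S^3\times[0,1]$; alternatively, invoke the topological $h$--cobordism theorem directly since $W$ is a simply-connected $h$--cobordism (the homology isomorphism plus simple connectivity forces it to be an $h$--cobordism). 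I expect this to be the main obstacle only in the sense of needing to cite Freedman correctly; everything else is definition-chasing.
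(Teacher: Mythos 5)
Your proposal is correct and follows essentially the same route as the paper: well-definedness is definition-chasing (the $\pi_1$ condition being vacuous since $S^3\times[0,1]$ is simply connected), and injectivity for $*=\ex,\top$ uses that normal generation by $\pi_1(S^3)$ forces $\pi_1(W)=1$, whence Freedman's topological Poincar\'e/$h$--cobordism theorem gives $W\cong S^3\times[0,1]$ homeomorphically, with the $*=R$ case being tautological. Your extra detail on justifying the Freedman step (capping with balls or invoking the $h$--cobordism theorem) is a fine elaboration of what the paper simply cites.
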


\begin{proof}Let $* = \ex$.  If $K$ is exotically concordant to $J$, then $K$ and $J$ cobound a smooth annulus in $W:=S^3\times[0,1]$ with a possibly exotic smooth structure. Therefore, $W$ is a homology cobordism from $S^3$ to itself and since $\pi_1(W)=0$, $(K,S^3)=(J,S^3)$ in $\Chat_\ex$. Hence, the map $\Psi$ is well-defined.

Suppose $(K,S^3)=(J,S^3)$ in $\Chat_\ex$. Then $K$ and $J$ cobound a smooth annulus in a smooth homology cobordism $W$ from $S^3$ to itself with $\pi_1(W)$ normally generated by $\pi_1(S^3)=0$.  Thus, $W$ is simply connected, and by Freedman's proof of the topological 4--dimensional Poincar\'e Conjecture~\cite{Free82}, $W$ is homeomorphic to $S^3\times[0,1]$ (but not necessarily diffeomorphic). Thus, $K$ is exotically concordant to $J$.

The proof in the cases $*=\top$ or $R$ is similar. \end{proof}

\begin{remark}\label{rem:concthencob}We noted in Remark~\ref{rem:link} that satellite operators $P_i\subseteq V_i=S^1\times D^2$ ($i=0,1$)  in $\S_\str$ can be uniquely represented by the 2--component links $(\widetilde{P_i},\eta_i)$. In fact if the links $(\widetilde{P_0},\eta_0)$ and $(\widetilde{P_1},\eta_1)$ are exotically (resp.\ topologically) concordant, then the strong homology cylinders  $E(P_0)$ and $E(P_1)$ are strongly (resp.\ strongly topologically) cobordant. This is seen by cutting out a regular neighborhood of the concordance between the two links in $S^3\times[0,1]$. Similarly, suppose the operators $P_i$ are in $\S_R$ for $R$ some localization of $\Z$;  if the links $(\widetilde{P_0},\eta_0)$ and $(\widetilde{P_1},\eta_1)$ are $R$--concordant, then $E(P_0)$ and $E(P_1)$ are $R$--cobordant. 
\end{remark}

\begin{remark}\label{rem:cobthenconc} 
A generalized $*$--satellite operator $(M, i_+, i_-)$ yields a link $(\widetilde{P_M}, \eta_M)$ in a 3--manifold $\widehat{M}$, as follows. We obtain $\widehat{M}$ by attaching 2--handles to $M$ along $i_-(m)$ and $i_+(\ell)$, followed by 3--handles along the two resulting sphere boundary components. Let $\widetilde{P_M}$ denote the image of $i_-(\ell)$ in $\widehat{M}$ and $\eta_M$ the image of $i_+(m)$ in $\widehat{M}$. If $*=R$ then $\widehat M$ is an $R$--homology sphere, whereas if $*=\str$ then $\widehat{M}$ is a $\Z$--homology sphere. It is straightforward to see that if two $R$--cylinders $(M,i_+, i_-)$ and $(N, j_+, j_-)$ are $R$--cobordant then $(\widetilde{P_M}, \eta_M)$ and $(\widetilde{P_N}, \eta_N)$ are concordant as links in an $R$--homology cobordism from $\widehat M$ to $\widehat N$.  If $M$ and $N$ are strong homology cylinders which are strongly cobordant then the links $(\widetilde{P_M}, \eta_M)$ and $(\widetilde{P_N}, \eta_N)$ are concordant in a homology cobordism from $\widehat{M}$ to $\widehat{N}$ whose fundamental group is normally generated by each of $\pi_1(\widehat{M})$ and $\pi_1(\widehat{N})$.  If the strong homology cylinders are merely strongly topologically cobordant, then the link concordance is topological. \end{remark}
\subsection{Generalized satellite operators act on knots in homology spheres.}  \label{sec:action}

\begin{proposition}\label{prop:commutes}
The monoid $\Shat_R^0$ acts on the set of knots in $R$--homology spheres. For the map $E:\S_R\rightarrow\Shat_R^0$, if $P\in \S_R$ and $K\subseteq S^3$ is a knot, $(P(K),S^3)$ is isotopic to $(E(P))(K,S^3)$. \end{proposition}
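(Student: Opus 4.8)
The plan is to define the action by a gluing construction extending the classical satellite operation, verify the monoid--action axioms, and then read off the compatibility statement. Given $(M,i_+,i_-)\in\Shat^0_R$ and a knot $(K,X)$ in an $R$--homology sphere, with exterior $E(K)=X\setminus\nu(K)$ carrying its meridian $m(K)$ and $R$--nullhomologous longitude $\ell(K)$, I would set
$$(M,i_+,i_-)\cdot(K,X):=(K',X'),\qquad X':=E(K)\cup_{i_+}M\cup_{i_-}W,$$
where $W\cong S^1\times D^2$, the face $i_+(T)\subseteq\partial M$ is glued to $\partial E(K)$ by $i_+(\ell)\mapsto\ell(K)$ and $i_+(m)\mapsto m(K)$, the solid torus $W$ is attached to $i_-(T)\subseteq\partial M$ by a homeomorphism sending its meridian to $i_-(m)$ and its longitude to $i_-(\ell)$, and $K'$ is the (oriented) core of $W$. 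When $(M,i_+,i_-)=E(P)$ for $P\in\S_R$ I also write this knot as $(E(P))(K,X)$.

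I would then check that $X'$ is an $R$--homology sphere: since $X$ is one, $E(K)$ has the $R$--homology of a circle generated by $m(K)$; a Mayer--Vietoris argument using that $(i_+)_*\colon H_*(T;R)\to H_*(M;R)$ is an isomorphism shows $E(K)\cup_{i_+}M$ still has the $R$--homology of a circle generated by $[i_+(m)]$; and the hypothesis that $(i_+)^{-1}_*\circ(i_-)_*$ is diagonal of determinant one forces $[i_-(m)]$ to be a unit multiple of $[i_+(m)]$, so Dehn filling along $i_-(m)$ gives an $R$--homology sphere. Well-definedness (independence of $\nu(K)$ and $W$ and of the gluing maps within their isotopy classes, and invariance under the equivalence $(M,i_+,i_-)=(N,j_+,j_-)$) is routine, since each gluing is pinned down up to isotopy by the named curves and a homeomorphism realizing an equivalence of cylinders extends over the attached pieces by the identity. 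Triviality of the identity cylinder $(T\times[0,1],\id\times\{1\},\id\times\{0\})$ is immediate: attaching $T\times[0,1]$ only collars $E(K)$, and the subsequent filling along $i_-(m)\sim m(K)$ returns $(K,X)$.

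The substantive point, which I expect to be the main obstacle, is compatibility with the stacking product $\star$. Writing $(K_N,X_N):=(N,j_+,j_-)\cdot(K,X)$, I would note that removing the filling torus recovers $E(K_N)=E(K)\cup_{j_+}N$ with $m(K_N)=j_-(m)$ and---crucially---$\ell(K_N)=j_-(\ell)$: the diagonal hypothesis on $N$ gives $(j_-)_*(\ell)=a\cdot(j_+)_*(\ell)$ with $a\in R^{\times}$ in $H_1(N;R)$, hence in $H_1(E(K)\cup_{j_+}N;R)$, where $(j_+)_*(\ell)=[\ell(K)]=0$ since $\ell(K)$ is $R$--nullhomologous in $E(K)$; so $j_-(\ell)$ is the primitive $R$--nullhomologous longitude of $K_N$. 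Hence applying $(M,i_+,i_-)$ to $(K_N,X_N)$ glues $i_+(T)$ to $\partial E(K_N)$ by exactly the identification $i_+\sim j_-$ used to form $M\star N$, so both $\bigl((M,i_+,i_-)\star(N,j_+,j_-)\bigr)\cdot(K,X)$ and $(M,i_+,i_-)\cdot\bigl((N,j_+,j_-)\cdot(K,X)\bigr)$ equal $E(K)\cup_{j_+}N\cup_{j_-\sim i_+}M\cup_{i_-}W$ with the same core, and therefore agree.

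The final sentence of the proposition then follows by unwinding definitions. For $P\subseteq V=S^1\times D^2$ in $\S_R$, the maps $i_+$ and $i_-$ identify $T$ with $\partial V$ (via $m\mapsto m(V),\ \ell\mapsto\ell(V)$) and with $\partial\nu(P)$ (via $m\mapsto m(P),\ \ell\mapsto\ell(P)$) respectively; so gluing $i_+(T)=\partial V$ to $\partial E(K)$ by $\ell(V)\mapsto\ell(K),\ m(V)\mapsto m(K)$ is precisely the satellite gluing, making $E(K)\cup_{i_+}E(P)$ the exterior of $P(K)\subseteq S^3$ with leftover boundary $\partial\nu(P)$ of meridian $m(P)=i_-(m)$. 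Filling along $m(P)$ and taking the core re-inserts $\nu(P)$, recovering $S^3$ with the knot $P(K)$; thus $(E(P))(K,S^3)$ is isotopic to $(P(K),S^3)$. The recurring technical point throughout is bookkeeping the longitude --- that the canonical $R$--nullhomologous longitude of the knot produced at each stage is the image of $\ell$ --- and it is exactly the determinant--one diagonal condition defining $\Shat^0_R$ (mirroring $w(P)\cdot\tfrac{1}{w(P)}=1$ for patterns) that makes this work.
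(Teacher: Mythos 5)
Your proposal is correct and follows essentially the same route as the paper: you define the action by exactly the same gluing $E(K)\cup_{i_+}M\cup_{i_-}(S^1\times D^2)$ with the core as the new knot, and observe that for $M=E(P)$ this reproduces the classical satellite gluing. The only difference is that you spell out the verifications (the Mayer--Vietoris check that $X'$ is an $R$--homology sphere, and the longitude bookkeeping behind associativity) which the paper dismisses as ``easy to check'' and ``straightforward,'' and these details are correct.
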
 

Note that since $\Shat_\str^0\subseteq \Shat_\Z^0$, the above says that $\Shat_\str^0$ acts on the set of knots in $\Z$-homology spheres.

\begin{proof}[Proof of Proposition~\ref{prop:commutes}]Let $K$ be a knot in the $R$--homology sphere $Y$, and $(M,i_+,i_-)$ be an $R$--homology cylinder. Construct a 3--manifold $Y'$ as follows---start with $M$, glue on a solid torus along $i_-(T)$ such that $i_-(m)$ bounds a disk, and glue in $Y-N(K)$ such that $i_+(\ell)\sim \ell(K)$ and $i_+(m)\sim m(K)$. Therefore, 
$$Y':= S^1\times D^2 \underset{\bdry D^2\sim i_-(m)}{\cup} M \underset{i_+(\ell) \sim \ell(K)}{\underset{i_+(m) \sim m(K)}{\cup}} Y-K.$$

It is easy to check that $Y'$ is an $R$--homology sphere when $(V,i_+,i_-)\in\Shat_R^0$. Let $K'$ be the core of the solid torus $S^1\times D^2$ in this decomposition. The above construction gives the desired action on knots in $R$--homology spheres, that is,
$$(M,i_+,i_-)\cdot(K,Y):=(K',Y')$$
It is straightforward to see that for all homology cylinders $M$ and $N$ and any pair $(K,Y)$ as above, $(M\star N)\cdot (K,Y) = M\cdot (N\cdot (K,Y))$. Therefore we have a monoid action. The pairs $(P(K),S^3)$ and $(E(P))\cdot (K,S^3)$ are isotopic since the gluing instructions given above are identical to those in the classical satellite construction. \end{proof}

\begin{proposition}\label{prop:acts} Let $R$ be a localization of $\Z$ and $* \in\{\ex,\top, R\}$. The monoid action of Proposition~\ref{prop:commutes} descends to a group action by $\Shat_*$ on $\Chat_*$ for each choice of $* \in\{\ex,\top, R\}$.\end{proposition}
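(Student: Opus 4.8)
The plan is to reduce the statement to two compatibility assertions about the monoid action $(M,i_+,i_-)\cdot(K,Y)$ of Proposition~\ref{prop:commutes}, and then invoke the fact, already established in Section~\ref{sec:Shat}, that $\Shat_*$ is a group. Fix $*\in\{\ex,\top,R\}$. The two assertions are: (i) if $(K,Y)$ and $(K',Y')$ are $*$-concordant and $(M,i_+,i_-)$ is a generalized $*$-satellite operator, then $(M,i_+,i_-)\cdot(K,Y)$ and $(M,i_+,i_-)\cdot(K',Y')$ are $*$-concordant; and (ii) if $(M,i_+,i_-)$ and $(N,j_+,j_-)$ are cobordant generalized $*$-satellite operators (strongly (topologically) cobordant when $*\in\{\ex,\top\}$, $R$-cobordant when $*=R$) and $(K,Y)$ is a knot in a $*$-homology sphere, then $(M,i_+,i_-)\cdot(K,Y)$ and $(N,j_+,j_-)\cdot(K,Y)$ are $*$-concordant. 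Granting (i) and (ii), the class of $(M,i_+,i_-)\cdot(K,Y)$ in $\Chat_*$ depends only on the classes of $(M,i_+,i_-)$ in $\Shat_*$ and of $(K,Y)$ in $\Chat_*$; together with associativity from Proposition~\ref{prop:commutes} this yields a monoid action of $\Shat_*$ on $\Chat_*$, and since $\Shat_*$ is a group this action is a group action.

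Both (i) and (ii) would be proved by the same cut-and-paste construction: assemble a $*$-homology cobordism by stacking cobordisms along $T^2\times[0,1]$ faces. For (i), let $Z$ be the $*$-homology cobordism from $Y$ to $Y'$ containing the concordance annulus $C$ between $K$ and $K'$, and set $Z_C:=Z\setminus N(C)$, a cobordism from $Y\setminus N(K)$ to $Y'\setminus N(K')$ whose ``side'' face is a copy of $T^2\times[0,1]$. Glue onto $Z_C$ the product cobordisms $M\times[0,1]$ and $(S^1\times D^2)\times[0,1]$, matching at each level $t\in[0,1]$ the gluing instructions used in Proposition~\ref{prop:commutes}; the result is a $4$-manifold $W$ whose boundary is the disjoint union of $(M,i_+,i_-)\cdot(K,Y)$ and $-\bigl((M,i_+,i_-)\cdot(K',Y')\bigr)$, and the product annulus on the core of $S^1\times D^2$ is a concordance between the two core knots. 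A Mayer--Vietoris argument, using that $M$ is an $R$-cylinder and $Z_C$ is an $R$-homology cobordism of knot exteriors, shows that $W$ is an $R$-homology cobordism; when $*\in\{\ex,\top\}$ a van Kampen computation, feeding in the normal-generation hypotheses on $Z$ and the triviality of the product pieces, verifies that $\pi_1(W)$ is normally generated by the image of each end.

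The argument for (ii) is parallel. Let $W_0$ be the cobordism from $M$ to $N$, regarded as a cobordism rel boundary, so that it carries two ``side'' faces, each a copy of $T^2\times[0,1]$, one over $i_-(T)=j_-(T)$ and one over $i_+(T)=j_+(T)$. Cap the first face with $(S^1\times D^2)\times[0,1]$, so that the meridian bounds a disk at each level, and glue $(Y\setminus N(K))\times[0,1]$ onto the second face; this produces a $4$-manifold which is a cobordism between $(M,i_+,i_-)\cdot(K,Y)$ and $(N,j_+,j_-)\cdot(K,Y)$ and contains the product annulus on the core of the solid torus. Again Mayer--Vietoris shows it is an $R$-homology cobordism, and for $*\in\{\ex,\top\}$ van Kampen together with the strong-cobordism hypothesis on $W_0$ gives normal generation of $\pi_1$ by each end. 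Combining (i) and (ii) completes the proof as indicated above.

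I expect the main obstacle to be the bookkeeping in these constructions rather than any new idea: one must reconcile Levine's ``boundary is the double'' description of a cobordism of homology cylinders with the ``cobordism rel boundary with corners'' picture needed to have $T^2\times[0,1]$ faces available for gluing, keep careful track of which curves bound disks where, and---in the $\ex$ and $\top$ cases---run the van Kampen arguments carefully enough to confirm that the normal generators contributed by the cylinder cobordism, the knot concordance, and the product pieces (together with the fact that the solid-torus piece introduces no new generators) really do normally generate $\pi_1$ of the new cobordism from each end. The homology side is routine Mayer--Vietoris, essentially the same computation used in Section~\ref{sec:Shat} to see that $(E(P),i_+,i_-)$ is an $R$-cylinder, now carried out in families.
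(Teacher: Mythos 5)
Your proposal is correct and follows essentially the same route as the paper: build a $4$--manifold by gluing a product $S^1\times D^2\times[0,1]$, a cobordism of cylinders, and the exterior of the knot concordance according to the level-wise gluing instructions of Proposition~\ref{prop:commutes}, then verify the $R$--homology condition by Mayer--Vietoris and, for $*\in\{\ex,\top\}$, normal generation of $\pi_1$ by van Kampen. The only difference is organizational: the paper checks well-definedness in both variables at once (allowing both the cylinder cobordism and the concordance exterior to be non-products in a single glued manifold $U$), whereas you factor it into two steps, each of which is the special case where one of those pieces is a product.
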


\begin{proof}Let $(M, i_+, i_-), (N,j_+,j_-)\in \S_*$ be generalized $*$--satellite operators, for $*\in\{\str,R\}$, and $K$ and $J$ be knots in the manifolds $Y$ and $X$ respectively. From the proof of Proposition~\ref{prop:commutes}, we know that $(M,i_+,i_-)\cdot (K,Y)$ and $(N,j_+,j_-)\cdot (J,X)$ are knots in the 3--manifolds $Y' = S^1\times D^2\,\cup\, M\cup E(K)$ and $X' = S^1\times D^2\,\cup\, N\cup E(J)$, where, in particular, the resulting knots are given by the cores of the $S^1\times D^2$--pieces.  

Suppose $(K,Y)$ and $(J,X)$ are $R$--concordant, i.e.\ $(K,Y)=(J,X)$ in $\Chat_R$, and $(M,i_+,i_-)$ and $(N,j_+,j_-)$ are $R$--cobordant, i.e.\ $(M,i_+,i_-)=(N,j_+,j_-)$ in $\Shat_R$. Then there is an $R$--cobordism $U_0$ between $M$ and $N$, and an $R$--concordance $C$ from $K$ to $J$ in some 4--manifold; let $E(C)$ be the complement of $C$.

The gluing instructions used to build $X'$ and $Y'$ extend to gluing instructions for a 4--manifold
$$U=S^1\times D^2\times[0,1]\,\,\cup\,\, U_0\cup E(C)$$
with $\bdry U=Y'\sqcup -X'$. Since each of the pieces of $U$ is smooth, $U$ is smooth as well. Moreover, since each of the pieces of $U$ is an $R$--homology cobordism, it follows from a Mayer--Vietoris argument that $U$ is an $R$--homology cobordism as well, where the core of $S^1\times D^2 \times[0,1]$ is a smooth annulus cobounded by the two resulting knots. This completes the proof in the case that $*=R$. 

For the $*=\ex$ case, we only need to show that the additional condition on fundamental groups is satisfied when the spaces used above are in $\Shat_\ex$ and $\Chat_\ex$. This can be seen using two successive Seifert--van Kampen arguments since the fundamental group of each piece of $U$ is normally generated by each of its boundary components. The last remaining case, $*=\top$, follows from the various arguments above, with the additional trivial observation that if the pieces of $U$ are merely topological, the core of $S^1\times D^2 \times[0,1]$ is a locally flat annulus. \end{proof}

We combine the results and definitions of this section to give the main theorem. 

\begin{theorem_main}
Let $R$ be a localization of $\,\Z$. For the maps $\Psi$ from Proposition~\ref{prop:enlarge}, the monoid morphisms  $E:\S_\str\to \Shat_\ex$, $\S_\str\to \Shat_\top$ and $\S_R\to \Shat_R$ from Proposition~\ref{prop:homomorphism}, and any $P\in \S_\str$, and $Q\in\S_R$, the following diagrams commute.
\begin{equation}\label{diag:maindiagram}
\begin{tikzpicture}[baseline=(current  bounding  box.center)]
  \matrix (m) [matrix of math nodes,row sep=3em,column sep=4em,minimum width=2em]
  {
     \C_\ex & \C_\ex \\
     \Chat_\ex & \Chat_\ex \\};
  \path[-stealth]
    (m-1-1) edge node [above] {$P$} (m-1-2)
    (m-2-1) edge node [above] {$E(P)$} (m-2-2);
  \path[-stealth,right hook->]
     (m-1-1) edge node [left] {$\Psi$} (m-2-1)
     (m-1-2) edge node [right] {$\Psi$} (m-2-2);
     
\end{tikzpicture}\hspace{20pt}
\begin{tikzpicture}[baseline=(current  bounding  box.center)]
  \matrix (m) [matrix of math nodes,row sep=3em,column sep=4em,minimum width=2em]
  {
     \C_\top & \C_\top \\
     \Chat_{\smash{\top}} & \Chat_{\smash{\top}} \\};
  \path[-stealth]
    (m-1-1) edge node [above] {$P$} (m-1-2)
    (m-2-1) edge node [above] {$E(P)$} (m-2-2);
  \path[-stealth,right hook->]
    (m-1-1) edge node [left] {$\Psi$} (m-2-1)  
    (m-1-2) edge node [right] {$\Psi$} (m-2-2);
\end{tikzpicture}\hspace{20pt}
\begin{tikzpicture}[baseline=(current  bounding  box.center)]
  \matrix (m) [matrix of math nodes,row sep=3em,column sep=4em,minimum width=2em]
  {
     \C_R & \C_R \\
     \Chat_R & \Chat_R \\};
  \path[-stealth]
    (m-1-1) edge node [above] {$Q$} (m-1-2)
    (m-2-1) edge node [above] {$E(Q)$} (m-2-2);
  \path[-stealth,right hook->]
     (m-1-1) edge node [left] {$\Psi$} (m-2-1)
     (m-1-2) edge node [right] {$\Psi$} (m-2-2);
\end{tikzpicture}
\end{equation}
\end{theorem_main}

\begin{proof}The result follows from Propositions \ref{prop:homomorphism}, \ref{prop:enlarge}, and \ref{prop:commutes}.\end{proof}

As an immediate corollary of the main theorem we recover the following result from \cite{CDR14}.
\begin{corollary}[Theorem 5.1 of \cite{CDR14}]\label{cor:injectivecorollary}Let $P$ be a pattern. If $P$ has winding number $n\neq 0$ then $P:\C_{\Z[1/n]}\to\C_{\Z[1/n]}$ is injective. If $P$ has strong winding number $\pm 1$ then $P:\C_{\ex}\to\C_{\ex}$ and $P:\C_{\top}\to\C_{\top}$ are injective. \end{corollary}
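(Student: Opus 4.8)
The plan is to derive Corollary~\ref{cor:injectivecorollary} directly from the Main Theorem by a diagram chase, using the one extra ingredient that $E(P)$ lands in a \emph{group} and hence acts \emph{bijectively}. First I would fix a pattern $P$ with winding number $n\neq 0$ and observe that $P\in\S_{\Z[1/n]}$, since $1/n\in\Z[1/n]$ means $w(P)$ is invertible there. By Proposition~\ref{prop:homomorphism} we have $E(P)\in\Shat_{\Z[1/n]}$, which by the discussion following the definition of $\Shat^0_*$ is a \emph{subgroup} of $\H_{\Z[1/n]}$; and by Proposition~\ref{prop:acts} this group acts on $\Chat_{\Z[1/n]}$. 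In particular the map $E(P):\Chat_{\Z[1/n]}\to\Chat_{\Z[1/n]}$ is a bijection, being the action of a group element, with inverse the action of $E(P)^{-1}$. The analogous statement holds with $\Z[1/n]$ replaced by $\ex$ or $\top$ when $P\in\S_\str$, using the second part of Proposition~\ref{prop:homomorphism} and Proposition~\ref{prop:acts}: $E(P)\in\Shat_\ex$ (resp.\ $\Shat_\top$), which is a group acting on $\Chat_\ex$ (resp.\ $\Chat_\top$), so $E(P)$ acts bijectively.

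Next I would run the chase. Suppose $P(K)=P(J)$ in $\C_*$ for the relevant $*$; applying $\Psi$ gives $\Psi(P(K))=\Psi(P(J))$ in $\Chat_*$. By the commuting square of the Main Theorem, $\Psi(P(K))=E(P)(\Psi(K))$ and likewise for $J$, so $E(P)(\Psi(K))=E(P)(\Psi(J))$. Since $E(P)$ is a bijection on $\Chat_*$ we may cancel it, obtaining $\Psi(K)=\Psi(J)$ in $\Chat_*$. Finally, $\Psi$ is injective by Proposition~\ref{prop:enlarge}, so $K=J$ in $\C_*$. This shows $P:\C_*\to\C_*$ is injective in each of the three cases, which is exactly the statement of the corollary.

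There is essentially no obstacle here: the corollary is a formal consequence of statements already proved, and the only thing to be careful about is bookkeeping---checking that the hypothesis ``winding number $n\neq0$'' really does place $P$ in $\S_{\Z[1/n]}$ and that ``strong winding number $\pm1$'' places $P$ in $\S_\str$ (both immediate from Definition~\ref{def:Smonoids}), and then citing the correct one of the three commuting squares for each case. If anything counts as the ``main'' step, it is the observation that a group action yields a bijection, which is what converts the mere commutativity of the Main Theorem's diagram into an injectivity statement; but this is a one-line remark rather than a substantive argument. I would present the whole thing in a few sentences.
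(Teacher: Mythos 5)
Your proposal is correct and follows essentially the same argument as the paper: the paper's proof is exactly this diagram chase, using that $E(P)\in\Shat_*$ acts as a group element (hence bijectively) on $\Chat_*$, together with the injectivity of $\Psi$ from Proposition~\ref{prop:enlarge}. Your bookkeeping that winding number $n\neq 0$ places $P$ in $\S_{\Z[1/n]}$ and strong winding number $\pm 1$ places $P$ in $\S_\str$ matches the paper's setup.
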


\begin{proof}The proof is a straightforward diagram chase. Let  $*\in\{\ex, \top, \Z[\frac{1}{n}]\}$. Suppose that $P(K)$ is concordant to $P(J)$ in the $*$--category, i.e.\ $P(K)=P(J)$ in $\C_*$. Then $\Psi(P(K)) = \Psi(P(J))$. Since the diagrams in~\eqref{diag:maindiagram} commute, we see that $(E(P))(\Psi(K)) = (E(P))(\Psi(J))$. Since $E(P)\in \Shat_*$ is an element of a group which acts on $\Chat_*$, it has an inverse. Therefore, the map $E(P)$ is bijective and in particular injective.  Thus, $\Psi(K) = \Psi(J)$. But $\Psi$ is also injective and therefore we conclude that $K=J$ in $\C_*$ as needed.\end{proof}

\section{Surjectivity of satellite operators}\label{sec:surjections}

Since satellite operators have now been shown to be injective in several categories (in Section~\ref{sec:background} as well as in \cite{CDR14}), it is natural to ask whether there exists a satellite operator $P$ such that the map $P:\C_*\to\C_*$ is \textit{surjective}, for $*\in \{\ex,\top,R\}$ for $R$ a localization of $\Z$. The following proposition shows that only patterns of winding number $\pm 1$ may be surjective. 

\begin{proposition}\label{prop:n>1}Let $P$ be a satellite operator with winding number $n\neq \pm 1$. The function $P:\C_*\to\C_*$ is not surjective for any $*\in\{\ex,\top, R\}$, where $R$ is a localization of $\Z$. \end{proposition}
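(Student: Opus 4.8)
The plan is to use a classical invariant that behaves predictably under the satellite operation and that is sensitive enough to detect the failure of surjectivity. The natural candidate is the Alexander polynomial together with its consequences for homology of cyclic covers, or more precisely the order of the first homology of the infinite cyclic cover of the $*$--homology cobordism class, but the cleanest route is to use the determinant (the order of $H_1$ of the double branched cover) or, even more simply, a divisibility statement coming from the Torres formula / Seifert form. First I would recall the well-known fact that for a winding number $n$ satellite operator $P$ and companion $K$, the Alexander polynomial satisfies $\Delta_{P(K)}(t) = \Delta_{P(U)}(t)\cdot \Delta_K(t^n)$ up to units. Hence if $n\neq \pm 1$, the polynomial $\Delta_K(t^n)$ is a polynomial in $t^n$, so $\Delta_{P(K)}(t)$ is always divisible (in $\Z[t^{\pm 1}]$) by a fixed nonconstant factor whenever $K$ is nontrivial, and in any case $\Delta_{P(K)}$ is constrained to a proper subset of all Alexander polynomials.

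The key steps, in order, would be: (1) Observe that the Alexander polynomial is an invariant of $\C_*$ for each $*\in\{\ex,\top,R\}$ — for $\C_\ex$ and $\C_\top$ this is classical (it is determined by the $S^3\times\{0\}$ end up to concordance, and a homology cobordism with the required $\pi_1$ condition does not change it), and for $\C_R$ one uses that an $R$--homology cobordism induces an isomorphism on the relevant Alexander modules after tensoring with $R$, so the Alexander polynomial is well-defined up to units in $R[t^{\pm 1}]$. (2) Exhibit a concrete knot that is not in the image: take $J$ to be any knot with $\Delta_J(t)$ not expressible as $\Delta_{P(U)}(t)\cdot g(t^n)$ for any $g$ — for instance, choosing $J$ with $\Delta_J$ of degree coprime-to-$n$ behavior, or simply a knot whose Alexander polynomial has a root that is not an $n$-th power of a root of any admissible polynomial; concretely the trefoil or figure-eight will do once $\deg \Delta_{P(U)}$ is accounted for, or one passes to a connected sum of enough copies of a fixed knot to force the degree of $\Delta_J$ to lie outside $\deg\Delta_{P(U)} + n\Z_{\geq 0}$. (3) Conclude that no $K$ satisfies $P(K) = J$ in $\C_*$, since that would force $\Delta_J \doteq \Delta_{P(U)}\cdot\Delta_K(t^n)$, contradicting the choice of $J$.

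The main obstacle I anticipate is making the $\C_R$ case fully rigorous: one must be careful that the Alexander polynomial, viewed in $R[t^{\pm1}]$, is genuinely a concordance invariant in an $R$--homology cobordism, and that the Torres-type multiplicativity formula for satellites still holds at this level of generality (it does, because it is a statement about the infinite cyclic cover of the complement, which only sees the $\Z$--homology of the pieces, and all the pieces $E(P)$, $E(K)$ glue along tori with controlled homology). A secondary subtlety is ensuring the chosen non-image knot $J$ works uniformly for all three categories simultaneously and for all $R$; the safest fix is to phrase the obstruction purely in terms of the degree of the Alexander polynomial modulo $n$ — since $\deg\Delta_{P(K)} \equiv \deg\Delta_{P(U)} \pmod{n}$ always, any knot $J$ with $\deg\Delta_J \not\equiv \deg\Delta_{P(U)} \pmod n$ (easily arranged, e.g.\ by connect-summing trefoils) cannot lie in the image. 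I would present the degree argument as the primary one since it sidesteps all unit ambiguities at once.
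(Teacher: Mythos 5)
There is a genuine gap, and it sits at step (1): the Alexander polynomial is \emph{not} an invariant of knot concordance, in any of the categories $\C_\ex$, $\C_\top$, $\C_R$. What is classical is only the Fox--Milnor condition: if $K_0$ and $K_1$ are concordant then $\Delta_{K_0}(t)\,\Delta_{K_1}(t) \doteq f(t)f(t^{-1})$ for some $f\in\Z[t^{\pm1}]$. Concordant knots can have different Alexander polynomials of different degrees --- for instance the unknot is smoothly concordant to the stevedore knot, whose Alexander polynomial $2t^2-5t+2$ has degree $2$. Consequently your ``safest fix,'' the degree-mod-$n$ argument, collapses: $\deg\Delta$ changes under concordance by the (arbitrary, even) degree of a norm factor $f(t)f(t^{-1})$, so $\deg\Delta \bmod n$ is simply not well defined on $\C_*$ (and parity, which is preserved, carries no information since knot Alexander polynomials always have even degree). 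The satellite multiplicativity formula $\Delta_{P(K)}(t)\doteq\Delta_{P(U)}(t)\,\Delta_K(t^n)$ is fine, but the obstruction must be extracted through a quantity that genuinely descends to $\C_*$; any repair along your lines would have to work modulo norms $f(t)f(t^{-1})$ (a Fox--Milnor-type argument), and then also address the $R$-local version, where even more polynomials become norms (e.g.\ the figure-eight knot is slice in a $\Z[\tfrac12]$-homology ball). That is possible in principle but requires a careful choice of $J$ and a nontrivial analysis of which irreducible factors of $\Delta_K(t^n)$ can be absorbed into norms; as written, step (3) does not follow.

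The paper avoids all of this by replacing the Alexander polynomial with the Levine--Tristram signature function, which satisfies the exactly analogous satellite formula $\sigma(P(K),\omega)=\sigma(P(U),\omega)+\sigma(K,\omega^n)$ and, crucially, \emph{is} an invariant of rational concordance (hence of each of $\C_\ex$, $\C_\top$, $\C_R$). One then takes $J$ with $\sigma(J,\omega)$ not of the form $g(\omega^n)$ (the right-handed trefoil works) and observes that $P(U)\# J$ has signature function $\sigma(P(U),\omega)+\sigma(J,\omega)$, which cannot equal $\sigma(P(U),\omega)+\sigma(K,\omega^n)$ for any $K$; so $P(U)\# J$ is not in the image. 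If you want to keep your structure, simply swap the invariant: your steps (2)--(3) go through verbatim with $\sigma(\,\cdot\,,\omega)$ in place of $\Delta$, and the delicate invariance issues you flagged for $\C_R$ disappear because signature invariance under $R$--homology concordance is already known.
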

\begin{proof}We know from \cite{Li77, LivM85} that for any knot $K$,
\begin{equation}\label{eqn:signatureobstruction}\sigma(P(K),\omega)=\sigma(P(U),\omega)+\sigma(K,\omega^n)\end{equation} 
where $U$ is the unknot, and $\sigma(\cdot,\omega)$ denotes the Levine-Tristram signature at $\omega\in\mathbb{C}$, $\lvert\omega\rvert=1$. For a fixed $P$ this imposes restrictions on the signature function of $P(K)$, as follows. Let $J$ be a knot for which $\sigma(J,\omega)$ is not of the form $g(\omega^n)$ for any function on $g$ on $S^1$, for example, the right-handed trefoil knot.  Then 
$\sigma(P(U)\#J, \omega) = \sigma(P(U), \omega) + \sigma(J, \omega)$ cannot be of the form prescribed to $\sigma(P(K),\omega)$ in equation~\eqref{eqn:signatureobstruction}. Therefore, $P(U)\#J$ is not in the image of $P$; the result follows since the signature function is an invariant of rational concordance.\end{proof}

As a result, we mostly restrict ourself henceforth to satellite operators in $\S_\Z$ and $\S_\str$. Of course, connected sum operators, i.e.\ satellite operators of the form $Q_J$ shown in Figure~\ref{fig:connected sum}, are clearly surjective. We say that a winding number $\pm 1$ satellite operator $P$ is \textit{non-trivial} if it is distinct as an element of $\Shat_\Z$ from the connected sum operators, $Q_J$ for all knots $J$. 

First we note we have a characterization of surjective satellite operators as follows. 

\begin{proposition}\label{prop:surjectivecharacterization}The satellite operator $P\in\S_\str$ gives a surjective map $P:\C_*\to\C_*$  for $*\in\{\ex, \top\}$ if and only if $E(P)^{-1}(\Psi(\C_*))\subseteq \Psi(\C_*)$, where $E(P)^{-1}$ is the inverse of the homology cylinder $E(P)\in\Shat_*$. Similarly, $P\in\S_\Z$ gives a surjective map $P:\C_\Z\to\C_\Z$ if and only if $E(P)^{-1}(\Psi(\C_\Z))\subseteq \Psi(\C_\Z)$\end{proposition}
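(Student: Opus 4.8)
The plan is to prove both directions by translating statements about the satellite operator $P$ on $\C_*$ into statements about the group element $E(P) \in \Shat_*$ acting on $\Chat_*$, using the commuting diagrams~\eqref{diag:maindiagram} of the main theorem and the injectivity of $\Psi$ from Proposition~\ref{prop:enlarge}. Throughout, let $*\in\{\ex,\top\}$ (the case $*=\Z$ is identical, replacing $\S_\str$ by $\S_\Z$ and $\Shat_*$ by $\Shat_\Z$). The key point is that $E(P)$ lies in the \emph{group} $\Shat_*$, so it acts bijectively on $\Chat_*$, and hence $E(P)^{-1}$ is a genuine bijection of $\Chat_*$ with $E(P)\circ E(P)^{-1} = \id = E(P)^{-1}\circ E(P)$.

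First I would prove the forward direction. Suppose $P:\C_*\to\C_*$ is surjective. Let $[(K,X)] \in E(P)^{-1}(\Psi(\C_*))$, so $E(P)\cdot [(K,X)] = \Psi([L])$ for some knot $L\subseteq S^3$. By surjectivity of $P$ on $\C_*$, there is a knot $J\subseteq S^3$ with $P(J) = L$ in $\C_*$, i.e.\ $\Psi([P(J)]) = \Psi([L])$. By the commuting diagram, $\Psi([P(J)]) = E(P)\cdot \Psi([J])$. Hence $E(P)\cdot[(K,X)] = E(P)\cdot \Psi([J])$, and since $E(P)$ is a bijection of $\Chat_*$ we conclude $[(K,X)] = \Psi([J]) \in \Psi(\C_*)$. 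This shows $E(P)^{-1}(\Psi(\C_*)) \subseteq \Psi(\C_*)$.

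Conversely, suppose $E(P)^{-1}(\Psi(\C_*)) \subseteq \Psi(\C_*)$. Given any knot $L\subseteq S^3$, consider $E(P)^{-1}\cdot\Psi([L]) \in \Psi(\C_*)$ by hypothesis, so $E(P)^{-1}\cdot\Psi([L]) = \Psi([J])$ for some knot $J\subseteq S^3$. Applying $E(P)$ and using that it inverts $E(P)^{-1}$, we get $\Psi([L]) = E(P)\cdot\Psi([J]) = \Psi([P(J)])$, where the last equality is again the commuting diagram. Since $\Psi$ is injective, $[L] = [P(J)]$ in $\C_*$, so $L$ is in the image of $P:\C_*\to\C_*$. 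As $L$ was arbitrary, $P$ is surjective.

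The two directions are genuine diagram chases and there is no serious obstacle; the only thing that requires care is making sure that every object invoked (the homology spheres $X$, the representatives $(K,X)$) stays within the categories where the main theorem applies, and that $E(P)^{-1}$ is legitimately a bijection — both of which are guaranteed because $\Shat_*$ is a group acting on $\Chat_*$ (Proposition~\ref{prop:acts}) and $E(P)\in\Shat_*$ by Proposition~\ref{prop:homomorphism}. One should also note explicitly that, since $E(P)^{-1}$ need not a priori be of the form $E(\overline P)$ for a pattern $\overline P$, the set $E(P)^{-1}(\Psi(\C_*))$ is the natural object to condition on rather than the image of some satellite operator — this is exactly the subtlety that motivates Theorem~\ref{thm:honestinverse} in the next part of the paper.
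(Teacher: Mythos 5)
Your proof is correct and follows essentially the same route as the paper: the paper's own argument is the same diagram chase, resting on the bijectivity of the group element $E(P)$ on $\Chat_*$, the commutativity of the diagrams in~\eqref{diag:maindiagram}, and the injectivity of $\Psi$. The only cosmetic difference is that the paper phrases the criterion as the equality $E(P)^{-1}(\Psi(\C_*))=\Psi(\C_*)$ and then notes that the reverse inclusion is automatic since $E(P)(\Psi(\C_*))\subseteq\Psi(\C_*)$ always holds, whereas you chase the two implications of the stated inclusion directly.
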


\begin{proof}The key observation here is that since $E(P)$ acts via a group action, it must give a bijection on $\Chat_*$ for each $*\in\{\ex,\top,\Z\}$. Therefore, by the commutativity of the diagrams in~\eqref{diag:maindiagram} and the injectivity of $\Psi$, we see that $P:\C_*\to\C_*$ is surjective if and only if $E(P)^{-1}(\Psi(\C_*))=\Psi(\C_*)$. However, we know that $E(P)(\Psi(\C_*))\subseteq \Psi(\C_*)$ for all $P$. \end{proof}

It is worth noting that one way to guarantee that $E(P)^{-1}(\Psi(\C_*))\subseteq \Psi(\C_*)$, 
 for $*\in\{\ex,\top\}$, is for $E(P)^{-1}$ to be the image under $E:\S_\str\to\Shat_*$ of some $\overline{P}\in\S_\str$, since as we saw in the proof above, $E(\overline{P})(\Psi(\C_*))\subseteq\Psi(\C_*)$ for all $\overline{P}$. This holds for connected sum operators as shown below. 

\begin{proposition}For $*\in\{\top, \ex, R\}$ and any knot $J$ in $S^3$, $E(Q_J)^{-1}=E(Q_{-J})$ in $\Shat_*$. \end{proposition}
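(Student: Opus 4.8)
The plan is to show directly that the homology cylinder $E(Q_{-J})$ is an honest two-sided inverse for $E(Q_J)$ under the stacking operation $\star$ on $H_*$, which suffices since inverses in a group are unique and $E(Q_{-J})$ already lies in $\Shat^0_*$ (indeed in the image of $E$ restricted to $\Sstr$, as $Q_{-J}$ has winding number $1$ and the relevant fundamental groups are generated by the longitude). Recall that for the connected sum operator $Q_J \subseteq V$, the exterior $E(Q_J)$ is simply $E(J) \cup_{\text{annulus}} (T^2\times[0,1])$-like: more concretely, $V$ minus a neighborhood of $Q_J$ is the knot exterior $E(J)$ with one extra boundary torus, where $i_+$ identifies $T$ with $\partial V$ (sending $m\mapsto m(V)$, $\ell\mapsto\ell(V)$) and $i_-$ identifies $T$ with the boundary of the removed neighborhood (sending $m\mapsto m(Q_J)$, $\ell\mapsto\ell(Q_J)$), with $m(Q_J)$ isotopic in $E(Q_J)$ to $m(V)$ and $\ell(Q_J)$ isotopic to $\ell(V)\#$(the longitude-framing picks up $J$).

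First I would give a careful description of $E(Q_J)$ as a homology cylinder: cutting $S^3 = E(J)\cup N(J)$ and observing that $Q_J(U) = J$, the cylinder $E(Q_J)$ is obtained from $E(J)$ by removing a small unknotted solid torus meeting a meridian disk of $V$ once; up to homeomorphism rel the two boundary tori this is $E(J)$ with a collar, i.e.\ homeomorphic to the ``punctured'' exterior. Then I would compute $E(Q_J)\star E(Q_{-J})$: stacking glues $i_+$ of the $E(Q_{-J})$ piece to $i_-$ of the $E(Q_J)$ piece via $m\sim m$, $\ell\sim\ell$, which identifies the meridian neighborhood removed from $E(J)$ with $\partial V$ of the $Q_{-J}$ exterior. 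The resulting manifold contains $E(J)$ glued to $E(-J)$ along a torus identifying $m(J)\sim m(-J)$, $\ell(J)\sim\ell(-J)$; but $E(J)\cup_{T} E(-J)$ along this gluing is precisely $S^2\times S^1$ minus... — more usefully, the standard fact is that $J\# -J$ is slice, equivalently $E(J)\cup_T E(-J)$ bounds appropriately — but at the level of homology cylinders I would instead argue that $E(Q_J)\star E(Q_{-J})$ is homeomorphic rel boundary to the identity cylinder $T^2\times[0,1]$, or at worst is $*$--cobordant to it. The cleanest route is the geometric one used already in the paper for the inverse in $\H_*$: the trace of the isotopy/slice-disk complement exhibits a cobordism from $E(Q_J)\star E(Q_{-J})$ to $T^2\times[0,1]$.

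Alternatively, and I think more robustly, I would invoke Remark~\ref{rem:concthencob} together with the observation that $E(P)^{-1} = (-E(P))$ with reversed $i_\pm$, which corresponds under the link dictionary of Remark~\ref{rem:link} to reversing orientations; for a connected sum operator $Q_J$ the associated link is $(J, \eta)$ with $\eta$ a meridian, and reversing the relevant orientation yields the link $(-J, \eta)$, i.e.\ the operator $Q_{-J}$. Since $E(P)^{-1}$ is characterized up to $*$--cobordism as the inverse in $\Shat_*$, and since $Q_{-J}$ maps under $E$ to a generalized operator satisfying $E(Q_J)\star E(Q_{-J})$ being cobordant to the identity (because $J\# -J$ is slice, hence the corresponding links/cylinders are cobordant to the split/trivial ones), we conclude $E(Q_J)^{-1} = E(Q_{-J})$.

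The main obstacle I anticipate is being precise about what ``$E(Q_J)$ as a homology cylinder'' is and carrying out the gluing computation $E(Q_J)\star E(Q_{-J})$ carefully enough to identify it — up to $*$--cobordism — with $T^2\times[0,1]$; the key input is the sliceness of $J\#-J$, and the slight care needed is to translate the slice disk for $J\#-J$ into a $*$--cobordism between the stacked cylinder and the identity cylinder (checking the homology and, for $*\in\{\ex,\top\}$, the normal-generation condition on fundamental groups, which is automatic here since everything is built from knot exteriors whose $\pi_1$ is normally generated by a meridian lying on the boundary). Once that cobordism is in hand, uniqueness of inverses in the group $\Shat_*$ finishes the proof.
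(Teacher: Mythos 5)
Your main line of argument is essentially the paper's own proof: the paper writes $E(Q_J)\star E(Q_{-J}) = E(Q_J\star Q_{-J}) = E(Q_{J\#-J})$, identifies this cylinder with the exterior of the link consisting of $J\#-J$ and a meridian, and uses sliceness of $J\#-J$ to produce a concordance from that link to the Hopf link (whose exterior is $T\times[0,1]$), the complement of the concordance being the required $*$--cobordism --- precisely the step you single out as the key one, including the homology and normal-generation checks. (Your ``alternative'' shortcut via orientation-reversal of the link is the only shaky part --- swapping the components of $(J,\mu)$ does not directly yield a pattern, since the second component becomes knotted --- but it is not needed, as your primary argument matches the paper's.)
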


\begin{proof} To prove the result is suffices to find a strong cobordism from $E(Q_J)\star E(Q_{-J})$ to the identity element $(T\times[0,1], \id\times\{0\}, \id\times\{1\})$.  Since $E:\S_*\to \Shat_*$ is a homomorphism $E(Q_J)\star E(Q_{-J}) = E(Q_J\star Q_{-J})$.  Finally, it is easy to see from the definition of multiplication in $\S_*$ that $Q_J\star Q_{-J} = Q_{J\#-J}$.  

As a 3--manifold $E(Q_{J\#-J})$ is diffeomorphic the the complement in $S^3$ of the 2--component link $L$ consisting of $J\#-J$ and a meridian $\mu$ for ${J\#-J}$.  The diffeomorphism sends the longitude and meridian of $J\#-J$ to the longitude and meridian of $Q_{J\#-J}$ respectively, and the meridian and longitude of $\mu$ to the longitude and meridian of the solid torus $\ell(V)$ and $m(V)$ respectively.  

Finally, since $J\#-J$ is slice, the link $(J\# -J) \sqcup \mu$ is concordant to the Hopf link, whose exterior is diffeomorphic to $T\times[0,1]$.  It is straightforward to check that the complement of the concordance provides a $*$--cobordism between $E(Q_J\star Q_{-J})$ and the identity element $(T\times[0,1], \id\times\{0\}, \id\times\{1\})$.  \end{proof}

In fact, there exist non-trivial satellite operators $P$ with winding number $\pm 1$ such that $E(P)^{-1}=E(\overline{P})$ for some $\overline{P}\in\S_\Z$, as we see below. 

\begin{theorem}\label{thm:honestinverse}Let $P\subseteq V=S^1\times D^2$ be in $\S_\Z$. If $m(P)$ is in the normal subgroup of $\pi_1(E(P))$ generated by $m(V)$ then $P$ is strong winding number $\pm 1$ and there exists another strong winding number one pattern $\overline{P}$ such that $E(P)^{-1}=E(\overline{P})$ as homology cylinders.\end{theorem}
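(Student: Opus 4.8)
The plan is to produce $\overline{P}$ explicitly from the $3$--manifold $E(P)$, equipped with its orientation-reversal and swapped boundary parametrizations, by realizing that manifold as the exterior of a pattern in a solid torus. Recall that the inverse of $E(P)=(E(P),i_+,i_-)$ in the group of homology cylinders is $(-E(P),i_-,i_+)$. To exhibit this as $E(\overline P)$ for an honest pattern $\overline P\subseteq V'=S^1\times D^2$, it suffices (by Remark~\ref{rem:link} and the discussion in Section~\ref{sec:Shat}) to check that attaching a $2$--handle to $-E(P)$ along the curve that is to become $\ell(V')$ --- namely $i_-(\ell)=\ell(P)$ --- and then a $3$--handle, yields $S^1\times D^2$ again, with the image of $i_+(\ell)=\ell(V)$ becoming a knot $\overline P$ in it. Equivalently, I would first pass to the $2$--component link picture: $E(P)$ is the exterior in $S^3$ of $(\widetilde P,\eta)$ with $\eta$ unknotted, and I claim that the hypothesis forces $\widetilde P$ to also be unknotted, so that $(\eta,\widetilde P)$ (the same link with components reordered) is again a "pattern link", defining $\overline P$; and that the roles of $m(V),\ell(V)$ versus $m(P),\ell(P)$ get interchanged exactly as required for $E(\overline P)=(-E(P),i_-,i_+)$.

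The first substantive step is the strong-winding-number claim. Since $w(P)=\pm1$, Proposition~\ref{Prop: strong winding number 1} reduces "$P\in\S_\str$" to: $m(V)$ normally generates $\pi_1(S^3-\widetilde P)=\pi_1(E(P))/\langle\langle \ell(V)\rangle\rangle$. Using that $V$ is obtained from $E(P)$ by a $2$--handle along $m(P)$ (plus a $3$--handle), we have $\pi_1(V)=\Z=\pi_1(E(P))/\langle\langle m(P)\rangle\rangle$, generated by $\ell(P)\simeq\ell(V)$. So modulo $\langle\langle m(P),\ell(V)\rangle\rangle$ the group $\pi_1(E(P))$ is trivial; but by hypothesis $\langle\langle m(P)\rangle\rangle\subseteq\langle\langle m(V)\rangle\rangle$, hence $\pi_1(E(P))$ is normally generated by $\{m(V),\ell(V)\}$, and also (again using $\langle\langle m(P)\rangle\rangle\subseteq\langle\langle m(V)\rangle\rangle$ together with $\pi_1(V)=\langle\ell(V)\rangle$) one deduces $m(V)$ alone normally generates $\pi_1(S^3-\widetilde P)$. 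This simultaneously gives $P\in\S_\str$ and, by the same token applied symmetrically, will give $\overline P\in\S_\str$.

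The second step is to build $\overline P$. Adding the $2$--handle to $E(P)$ along $\ell(V)$ and a $3$--handle recovers $S^3$ with the knot $\widetilde P$; doing the same along $m(P)$ recovers $V=S^1\times D^2$ with the core. I instead add the $2$--handle along $\ell(P)$: I would argue this yields a manifold with $H_1=\Z$ whose fundamental group, being $\pi_1(E(P))/\langle\langle \ell(P)\rangle\rangle$, is generated by $m(V)$ modulo $\langle\langle m(P)\rangle\rangle$; combined with the normal-generation facts above and the winding-number-$\pm1$ condition one shows this quotient is $\Z$, and that the closed-up manifold (after the $3$--handle) is a homotopy $S^3$, hence $S^3$ by Perelman --- or, better, one avoids geometrization by checking directly on the link level that $\widetilde P$ bounds a disk in the complement of $\eta$. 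The cleanest route is: the hypothesis "$m(P)\in\langle\langle m(V)\rangle\rangle$" says the meridian of $\widetilde P$ dies in $\pi_1(S^3\setminus(\widetilde P\cup\eta))/\langle\langle\text{meridian of }\eta\rangle\rangle=\pi_1(S^3\setminus\widetilde P)$ — wait, rather it says $m(P)$ is in $\langle\langle m(V)\rangle\rangle$, and since adding the $3$--handle/$2$--handle along $\eta$'s meridian kills $m(V)$... I would chase this to conclude $\pi_1(S^3\setminus\widetilde P)$ is normally generated by the image of $m(P)$, which is trivial there if $m(P)\in\langle\langle m(V)\rangle\rangle=\langle\langle \ell(P)\rangle\rangle^{\perp}$... in any case $\widetilde P$ is unknotted, so $(\eta,\widetilde P)$ is a pattern link defining a pattern $\overline P\subseteq V'$ with $E(\overline P)\cong E(P)$ as manifolds, with $(m(V'),\ell(V'))$ identified with $(m(P),\ell(P))$ and $(m(\overline P),\ell(\overline P))$ with $(m(V),\ell(V))$.

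The final step is bookkeeping: with those identifications, the homology cylinder $E(\overline P)=(E(P),j_+,j_-)$ has $j_+$ playing the role of the old $i_-$ and $j_-$ the role of $i_+$, and an orientation check (the solid-torus side is orientation-preserving, the pattern side orientation-reversing, so reversing the orientation of $E(P)$ swaps these) shows $E(\overline P)=(-E(P),i_-,i_+)=E(P)^{-1}$ in $\Shat_\Z$ (indeed as strict homology cylinders, before passing to cobordism). I expect the main obstacle to be the unknottedness of $\widetilde P$: turning the purely group-theoretic hypothesis $m(P)\in\langle\langle m(V)\rangle\rangle_{\pi_1(E(P))}$ into the geometric statement that the $2$--handle attachment along $\ell(P)$ (capped off) is genuinely $S^3$ and not merely a homotopy sphere. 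The safest fix is to phrase the whole construction on the level of the link $(\widetilde P,\eta)$ — if I can show $\widetilde P$ bounds an embedded disk disjoint from $\eta$ (equivalently, that $\eta$ is isotopic to a meridian of $\widetilde P$), everything else is formal; establishing that disk from the normal-generation hypothesis, possibly invoking the solution of the Poincaré conjecture if a homotopy-sphere genuinely appears, is where the real work lies.
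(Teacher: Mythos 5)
Your first step (deducing that $P$ has strong winding number $\pm 1$ from $\langle\langle m(P)\rangle\rangle\subseteq\langle\langle m(V)\rangle\rangle$ together with $\pi_1(E(P))/\langle\langle m(P)\rangle\rangle\cong\Z$ generated by $\ell(P)\simeq\ell(V)$) is fine and matches the paper. The construction of $\overline{P}$, however, has a genuine gap, and it starts with the reconstruction criterion. To exhibit $(-E(P),i_-,i_+)$ as the exterior of a pattern $\overline{P}\subseteq V'$ you must Dehn fill the boundary component that is to become $\partial N(\overline{P})$ along the curve that is to become $m(\overline{P})$ --- that is, fill along $i_+(m)=m(V)$ --- and check that the result is a solid torus; $\overline{P}$ is then the core of the filling torus (the image of $\ell(V)$). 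Filling along $i_-(\ell)=\ell(P)$, as you propose, is a different operation: applied to a pattern exterior it produces the exterior of $\widetilde{\overline{P}}$ in $S^3$, which is a solid torus only when $\widetilde{\overline{P}}$ is unknotted, and the leftover manifold after removing a neighborhood of the pushed-in $\ell(V)$ is no longer $-E(P)$. This wrong choice of filling curve is exactly what creates the ``homotopy sphere/Perelman'' worry you flag at the end: with the correct filling along $m(V)$, the hypothesis gives precisely $\pi_1\cong\pi_1(E(P))/\langle\langle m(V)\rangle\rangle=\pi_1(E(P))/\langle\langle m(P)\rangle\rangle\cong\Z$, and a compact orientable $3$--manifold with torus boundary, no sphere boundary components, and infinite cyclic fundamental group is a solid torus by classical $3$--manifold theory (Hempel), so neither unknottedness of $\widetilde{P}$ nor the Poincar\'{e} conjecture is ever needed. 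One then only checks that $m(P)$, being nullhomologous in the filled manifold, is its meridian, and normalizes $\ell(P)$ by meridional twists.

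Your fallback route through the link picture does not work either, for two reasons. First, the hypothesis does \emph{not} force $\widetilde{P}$ to be unknotted: the connected sum operator $Q_J$ satisfies $m(Q_J)\in\langle\langle m(V)\rangle\rangle$ (filling $E(Q_J)$ along $m(V)$ gives the complement of a knot in $S^2\times S^1$ meeting a sphere geometrically once, which is isotopic to $\{pt\}\times S^1$ by the lightbulb trick, so $m(Q_J)$ bounds a disk there), yet $\widetilde{Q_J}=J$ is knotted for knotted $J$. Second, even when $\widetilde{P}$ happens to be unknotted, the reordered link $(\eta,\widetilde{P})$ does not define the inverse cylinder: removing a tubular neighborhood of $\widetilde{P}$ produces a solid torus $V'$ whose meridian is the $0$--framed longitude of $\widetilde{P}$, i.e.\ $m(V')=\ell(P)$ and $\ell(V')=m(P)$, so the meridian/longitude roles come out swapped rather than matching the required identification $(m(V'),\ell(V'))=(m(P),\ell(P))$ for $(-E(P),i_-,i_+)$. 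The correct link-level description of the inverse is the one in Proposition~\ref{prop:inverseaslink} (meridians of the components in the $0$--surgered mirror-reverse diagram), not the reordered link.
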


\begin{proof} We see that $P$ is strong winding number $\pm 1$ by Proposition~\ref{Prop: strong winding number 1}.   Indeed, in order to construct $S^3-\widetilde P$ from $P$, a 2-handle is added to $\ell(V)$.  Thus, $\pi_1(E(P))\to \pi_1(S^3-\widetilde P)$ is surjective.  By assumption, $m(P)$ is in the normal subgroup generated by $m(V)$ in $\pi_1(E(P))$.  Since $\pi_1(E(P))\to \pi_1(S^3-\widetilde P)$ is surjective, $m(P)$ is in the normal subgroup generated by $m(V) = \eta$ in $\pi_1(S^3-\widetilde{P})$.  Since $\widetilde{P}$ is a knot in $S^3$, $\pi_1(S^3-\widetilde{P})$ is normally generated by $m(P)$. Since $m(P)\in \langle\langle \eta \rangle\rangle$, it follows that $\eta$ normally generates $\pi_1(S^3-\widetilde{P})$.  Proposition~\ref{Prop: strong winding number 1} now concludes that $P$ is strong winding number $\pm1$.  

Note that $\pi_1(E(P))/\langle\langle m(P)\rangle\rangle\cong \pi_1(V)\cong\Z$ since the solid torus $V$ is obtained from $E(P)$ by adding a 2--handle to the meridian of $P$ and then a 3--handle.  Additionally, $m(V)$ is nullhomotopic in $V$ so that $m(V) = 0$ in $\pi_1(E(P))/\langle\langle m(P)\rangle\rangle$ and   $\langle\langle m(V)\rangle\rangle\subseteq \langle\langle m(P)\rangle\rangle$. By assumption, $m(P)\in \langle\langle m(V)\rangle\rangle$ so that we conclude that  $\langle\langle m(P) \rangle\rangle = \langle\langle m(V)\rangle\rangle$.  Therefore, $\pi_1(E(P)) / \langle\langle m(P)\rangle\rangle = \pi_1(E(P)) / \langle\langle m(V)\rangle\rangle \cong \Z$. 

Now consider $E(P)^{-1}$. By definition, $E(P)^{-1}=(-E(P),i_-,i_+)$. Perform a Dehn filling on $-E(P)$ along $m(V)$ to obtain a manifold $X$. By the preceding paragraph,  $\pi_1(X)\cong \Z$ and therefore, since $\partial X$ has no $S^2$ components, $X$ is diffeomorphic to the solid torus~\cite[Theorem 5.2]{Hemp04}.  Since $m(P)$ must be mapped to a curve which is null homologous, we see that $m(P)\mapsto 1\times \partial D^2$.  By performing meridional twists if necessary, we may assume that  $\ell(P)\mapsto S^1\times 1$. Then, by definition, if we denote by $\overline{P}$ the image of $\ell(V)$ in $X\cong S^1\times D^2$, we see that $E(\overline{P})=(-E(P),i_-,i_+)$. Since $\ell(V)$ is homologous to $\ell(P)$ in $E(P)$ (since $P\in\S_\Z$),  $w(\overline{P}) = w(P)=1$.  Since $P$ is strong winding number $\pm 1$ each of the sets $\{m(P), \ell(P)\}$ and $\{m(V(P)), \ell(V(P))\}$ normally generate $\pi_1(E(P))$.  But these sets of curves are respectively the same as $\{m(V(\overline P)), \ell(V(\overline P))\}$ and $\{m(\overline P), \ell(\overline P)\}$.  It follows that $\overline{P}$ is strong winding number $\pm 1$ as well. \end{proof}

Under the assumptions of the above theorem, the satellite operator $P$ has an inverse $\overline{P}$ which is also a satellite operator. A close reading of the proof of the theorem reveals how to draw a picture of the latter given the former. In fact, it is easier to see how to draw a picture of the corresponding 2--component link $\overline{L}$ (see Remark~\ref{rem:link} and Figure~\ref{fig:patterntolink}); recall that given such a link $\overline{L}$ we can recover the satellite operator $\overline{P}$ by removing a tubular neighborhood of the second component of $\overline{L}$ from $S^3$. 

Start with the 2--component link $L$ corresponding to the given satellite operator $P$. The manifold $E(P)$ is exactly the complement of this link in $S^3$. A key observation in the above proof is that the manifold obtained by performing a Dehn filling of $-(S^3-L)$ along the second component of $L$ is homeomorphic to a solid torus, via a homemorphism taking the first component of $L$ to the longitude of the solid torus. Of course, if we were to perform a Dehn filling along the longitude of a solid torus we obtain $S^3$. Therefore, this is the same as saying that if we reverse the orientation and crossings of $L$ and then perform $0$--framed Dehn surgery on $S^3$ along both components, we get back $S^3$. Further, in this new $S^3$, we can find the components of the link corresponding to $\overline{P}$. In the proof above these were the images of the curves $\ell(V)$ and $m(P)$. In the framework of links, these are the images of the meridians of the two components of $L$ (see Figure~\ref{fig:schematicinversepicture}) -- the meridian of the first component of $L$ is the second component of $\overline{L}$ and the meridian of the second component of $L$ is the first component of $\overline{L}$. Therefore, we have proved the following proposition. 

\begin{figure}[t]
\includegraphics{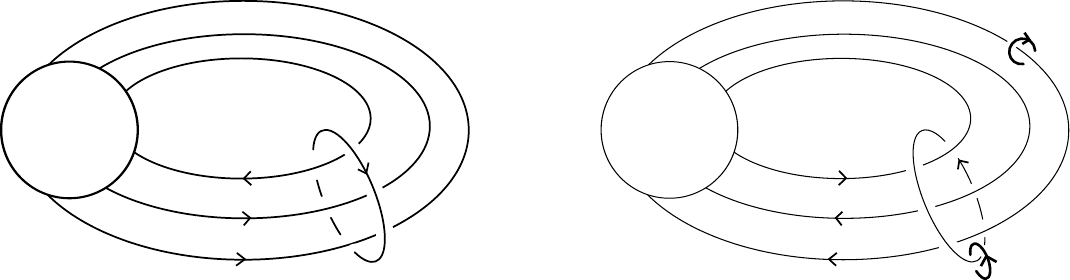}
\put(-4.1,0.55){$\mathscr{P}$}
\put(-2.7,1){$\widetilde{P}$}
\put(-2.75,0){$\eta$}
\put(-3.4,-0.4){(a)}
\put(-1.75,0.55){$m\mathscr{P}$}
\put(-0.7,0.5){0}
\put(-1.25,0){0}
\put(-0.3, -0.12){$\widetilde{\overline{P}}$}
\put(-0.125,0.95){$\overline{\eta}$}
\put(-1,-0.4){(b)}
\caption{(a) The 2--component link $(\widetilde{P},\eta)$ corresponds to the pattern $P$ (see Figure~\ref{fig:patterntolink}). Recall that the circle containing $\mathscr{P}$ denotes a tangle. (b) The circle containing $m\mathscr{P}$ indicates the tangle obtained from $\mathscr{P}$ in the previous panel by reversing all the crossings. The curves decorated with $0$'s give a surgery diagram for $S^3$. The 2--component link $(\widetilde{\overline{P}},\overline{\eta})$ (drawn in heavier weight) corresponds to the pattern $\overline{P}$.} \label{fig:schematicinversepicture}
\end{figure}

\begin{proposition}{\label{prop:inverseaslink}}
Let $P\subseteq V=S^1\times D^2$ be in $\S_\Z$. Assume that $m(P)$ is in the normal subgroup of $\pi_1(E(P))$ generated by $m(V)$ and $(\widetilde{P},\eta)$ is the 2--component link corresponding to $P$ where $\eta$ is unknotted in $S^3$. Then the inverse of $P$ is given by the link $\left(\widetilde{\overline{P}}, \overline\eta\right)$ in the surgery diagram for $S^3$ given by zero surgery on both components of the reverse mirror image of $(\widetilde{P},\eta)$, where $\widetilde{\overline{P}}$ is the meridian of $\eta$ and $\overline{\eta}$ is the meridian of $\widetilde{P}$. \end{proposition}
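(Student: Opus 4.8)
The plan is to translate the proof of Theorem~\ref{thm:honestinverse} verbatim into the language of links, using the dictionary of Remark~\ref{rem:link}. Recall that under that dictionary a pattern $P\subseteq V$ with $\eta$ unknotted in $S^3$ corresponds to the $2$--component link $(\widetilde{P},\eta)$, and $E(P)$ is precisely $S^3$ minus (an open tubular neighborhood of) this link; moreover $m(V) = \eta$ as a curve on $\partial E(P)$, $\ell(V)$ is the meridian of $\eta$, $\ell(P)$ is the longitude of $\widetilde{P}$, and $m(P)$ is the meridian of $\widetilde{P}$. So first I would restate the hypothesis ``$m(P)\in\langle\langle m(V)\rangle\rangle$'' as ``the meridian of $\widetilde{P}$ lies in the normal closure of $\eta$ in $\pi_1(S^3-(\widetilde P \cup \eta))$'', and invoke Theorem~\ref{thm:honestinverse} to conclude that $P$ and its inverse $\overline{P}$ are both strong winding number $\pm 1$, with $E(\overline{P}) = (-E(P), i_-, i_+)$.

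Next I would identify, step by step, what each geometric object in the proof of Theorem~\ref{thm:honestinverse} becomes in the link picture. The manifold $X$ there is the Dehn filling of $-E(P)$ along $m(V)=\eta$, which fills in $\eta$ along its meridian (the $0$--framed longitude of $\eta$, since $\eta$ is unknotted); equivalently, $X$ is obtained by $0$--surgery on $\eta\subseteq -S^3$. The proof shows $X\cong S^1\times D^2$ with $\ell(V)\mapsto S^1\times 1$ and $m(P)\mapsto 1\times\partial D^2$, and then $\overline{P}\subseteq X$ is defined as the image of $\ell(V)$, while $V(\overline P)$ is recovered by Dehn filling $X$ along the image of $\ell(P)$. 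Now $\ell(V)$ is the meridian of $\eta$, so $\widetilde{\overline P}$ — the image of $\overline P$ after capping off $V(\overline P)$, i.e.\ after the further $0$--surgery along (the image of) $\ell(P)$, which is the $0$--framed longitude of $\widetilde P$ — is exactly the meridian of $\eta$ sitting inside the manifold obtained by $0$--surgery on both $\eta$ and $\widetilde P$ in $-S^3$. Likewise $\overline\eta = m(V(\overline P))$ is the image of $m(P)$, which is the meridian of $\widetilde P$. Taking orientation-reverse of $S^3$ is the same as taking the mirror image (reversing all crossings) of the link and keeping the ambient $S^3$; combined with the orientation reversal of the link components implicit in the ``$(-E(P),i_-,i_+)$'' description, this is the ``reverse mirror image'' of the statement.

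The one genuinely substantive point — as opposed to bookkeeping — is the assertion that the manifold obtained by $0$--surgery on both components of the reverse mirror of $(\widetilde P,\eta)$ is actually $S^3$, so that ``the link $(\widetilde{\overline P},\overline\eta)$ in the surgery diagram for $S^3$'' makes sense as an honest link in $S^3$. But this is exactly the content of the proof of Theorem~\ref{thm:honestinverse}: the intermediate Dehn filling gives a solid torus $X$ (this used the $\pi_1$ computation $\pi_1(E(P))/\langle\langle m(P)\rangle\rangle = \pi_1(E(P))/\langle\langle m(V)\rangle\rangle\cong\Z$ together with Hempel's theorem that an irreducible $3$--manifold with torus boundary and $\pi_1\cong\Z$ is a solid torus), and filling a solid torus along its longitude yields $S^3$. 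So I would simply cite that computation. The remaining verifications — that the meridians of the two components of $L$ are the correct longitude/meridian pair for $\overline P$ under the identification $X\cong S^1\times D^2$, and that the winding number of $\overline P$ is $\pm 1$ — are precisely the last paragraph of the proof of Theorem~\ref{thm:honestinverse} reread through the dictionary, so the proof is essentially ``unwind the definitions and apply Theorem~\ref{thm:honestinverse}.'' I expect the only place requiring care is getting the orientation conventions (``reverse mirror'') to match the orientation-reversal in $E(\overline P)=(-E(P),i_-,i_+)$, and checking that the $0$--framings are the correct ones (which they are, because both $\eta$ and $\widetilde P$ are being filled along their \emph{longitudes} — $\ell(V)=$ meridian of $\eta$ being filled corresponds to filling $\eta$ along its longitude, and $\ell(P)=$ longitude of $\widetilde P$ being filled).
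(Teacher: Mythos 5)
Your proposal is correct and takes essentially the same route as the paper: the paper's argument for Proposition~\ref{prop:inverseaslink} is exactly the paragraph preceding it, which rereads the proof of Theorem~\ref{thm:honestinverse} through the dictionary of Remark~\ref{rem:link} just as you do (the filling $X$ is a solid torus by the $\pi_1$ computation and Hempel, filling a solid torus along its longitude gives $S^3$, so zero-surgery on both components of the reversed mirror of $(\widetilde{P},\eta)$ returns $S^3$, inside which the images of $\ell(V)$ and $m(P)$, i.e.\ the meridians of $\eta$ and of $\widetilde{P}$, form the link for $\overline{P}$). One small wording correction: the filling producing $X$ is along $m(V)$, which is the $0$--framed longitude of $\eta$ (not along $\ell(V)$, the meridian of $\eta$), and that is what makes this surgery $0$--framed; your conclusions are unaffected by this slip.
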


\begin{figure}[b]
        \centering
        \begin{subfigure}[t]{0.3\textwidth}
        \centering
		\includegraphics[width=1.5in]{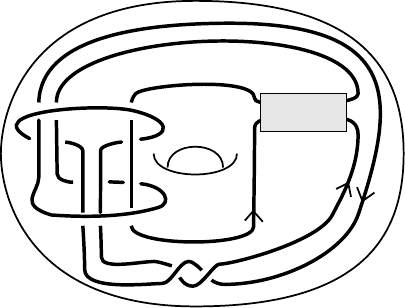}
		\put(-0.535,0.7){\tiny $2m+1$}
               	 \caption*{The satellite operator $P_m$}
        \end{subfigure}%
        \hspace{10pt}
        \begin{subfigure}[t]{0.3\textwidth}
        	      \centering
	      \includegraphics[width=1.5in]{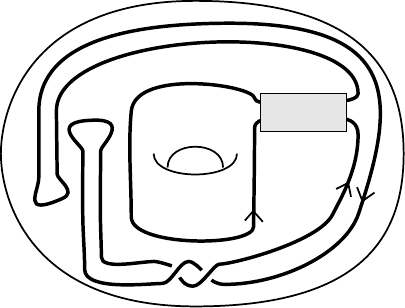}
	      \put(-0.535,0.7){\tiny $2m+1$}
               \caption*{The result of sliding $P_m$ over the meridian of $V(m)$}
        \end{subfigure}%
        \hspace{10pt}
        \begin{subfigure}[t]{0.3\textwidth}
        	      \centering
	     \includegraphics[width=1.5in]{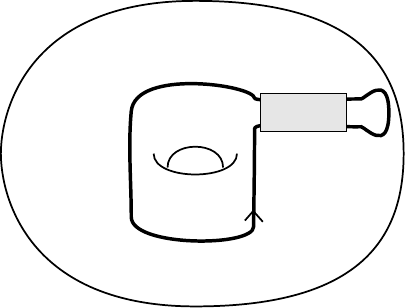}
	      \put(-0.535,0.7){\tiny $2m+1$}
               \caption*{The result of a further isotopy}
        \end{subfigure}%
        \caption{The patterns $P_m$ satisfy the requirements of Theorem~\ref{thm:honestinverse}.}\label{fig:proofofsurjex}
\end{figure}

\begin{remark}\label{rem:surjexinverse}Let $P_m\subseteq V(P_m)=S^1\times D^2$ be the patterns shown in Figure~\ref{fig:surjex} (and again in Figure~\ref{fig:proofofsurjex}). Note that each $P_m$ satisfies the requirements of Theorem~\ref{thm:honestinverse}, as follows. It suffices to show that $m(P_m)$ is nullhomotopic in the 3--manifold $N$ obtained from $E(P_m)$ by adding a 2--handle along $m(V(P_m))$. The result of sliding $P_m$ over this 2--handle twice (isotopies in $N$) is depicted in  Figure~\ref{fig:proofofsurjex}.  In the result of the isotopy, the meridian of $P_m$ cobounds an annulus with the meridian of $V(P_m)$ and so bounds a disk in $N$. As a result, we can use Proposition~\ref{prop:inverseaslink} to construct inverses for the patterns $\{P_m\}_{m\geq 0}$ shown in Figure~\ref{fig:surjex}. This is indicated in Figure~\ref{fig:exampleinverse}. \end{remark}

\begin{figure}
\includegraphics[width=4.5in]{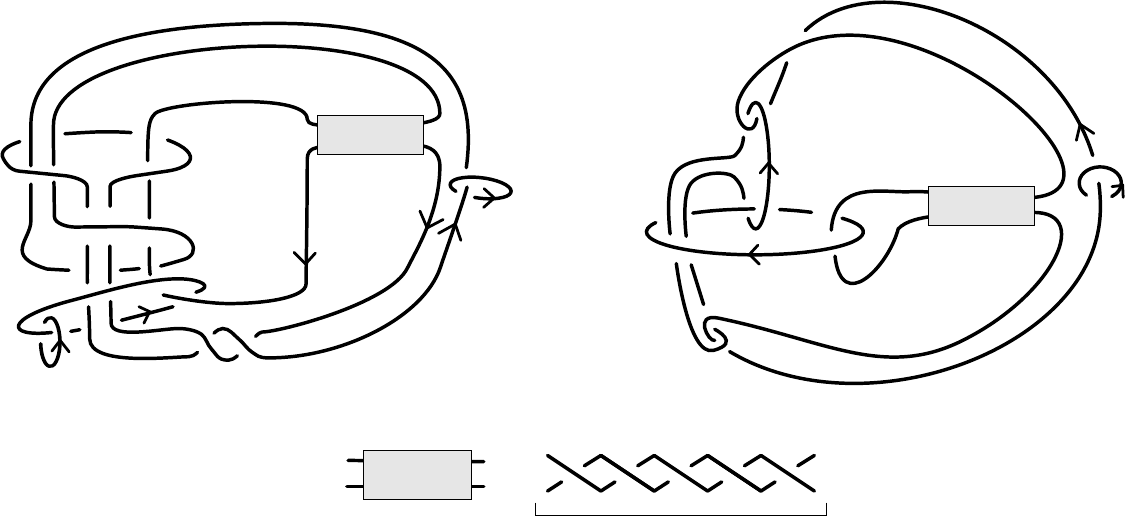}
\put(-3.175,1.5){\tiny $\overline{2m+1}$}
\put(-2.985,0.135){\tiny $\overline{2m+1}$}
\put(-0.73,1.21){\tiny $\overline{2m+1}$}
\put(-2.5,0.135){\tiny $=$}
\put(-2.2,-0.1){\tiny $2m+1$ half-twists}
\put(-2.75,0.85){0}
\put(-4.525,0.75){0}
\put(-1.525,0.9){0}
\put(-1,0.9){0}
\put(-4.3,.4){$\widetilde{\overline{P_m}}$}
\put(-2.5,1.15){$\overline{\eta}$}
\put(-2,.7){$\widetilde{\overline{P_m}}$}
\put(0,1.15){$\overline{\eta}$}
\caption{Left: By Proposition~\ref{prop:inverseaslink}, the link $(\widetilde{\overline{P_m}},\overline{\eta})$ in this surgery diagram represents the inverse of the pattern $P_m$, for $m\geq 0$. Right: This diagram is obtained from the one in the previous panel by handle-slides and isotopy, and we see a standard picture of $S^3$ (notice that the curves marked with $0$'s form a Hopf link). To get a picture of the inverse pattern as a link, we simply need to slide the undecorated curves away from the surgery curves. This readily yields a picture of a link in $S^3$.}\label{fig:exampleinverse}
\end{figure}

Theorem~\ref{thm:honestinverse} also gives a sufficient condition for satellite operators to be bijective, as follows. 

\begin{corollary}\label{cor:givessurj} Let $P\subseteq V=S^1\times D^2$ be in $\S_\Z$. If $m(P)$ is in the normal subgroup of $\pi_1(E(P))$ generated by $m(V)$ then $P:\C_*\to \C_*$ is bijectve for $*\in\{\ex,\top, \Z\}$.\end{corollary}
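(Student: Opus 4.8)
The plan is to deduce Corollary~\ref{cor:givessurj} directly from Theorem~\ref{thm:honestinverse} together with the commuting diagrams of the main theorem. By Theorem~\ref{thm:honestinverse}, the hypothesis that $m(P)$ lies in the normal subgroup of $\pi_1(E(P))$ generated by $m(V)$ guarantees that $P$ is strong winding number $\pm1$ and that there is a strong winding number $\pm1$ pattern $\overline{P}$ with $E(P)^{-1}=E(\overline{P})$ as homology cylinders. In particular $P\in\S_\str\subseteq\S_\Z$, so $E(P)$ makes sense as an element of each of $\Shat_\ex$, $\Shat_\top$, and $\Shat_\Z$, and the main theorem applies.

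First I would recall that for each $*\in\{\ex,\top,\Z\}$ the element $E(P)\in\Shat_*$ acts on $\Chat_*$ via a \emph{group} action (Proposition~\ref{prop:acts}), so $E(P):\Chat_*\to\Chat_*$ is a bijection; since the diagram in~\eqref{diag:maindiagram} commutes and $\Psi$ is injective (Proposition~\ref{prop:enlarge}), it follows at once that $P:\C_*\to\C_*$ is injective. This much already follows from Corollary~\ref{cor:injectivecorollary}; the new content is surjectivity. For surjectivity, fix a class $[L]\in\C_*$ and consider $\Psi([L])\in\Chat_*$. Applying the inverse homology cylinder, $E(P)^{-1}\bigl(\Psi([L])\bigr)=E(\overline{P})\bigl(\Psi([L])\bigr)$, and by the main theorem applied to the pattern $\overline{P}$ (which lies in $\S_\str\subseteq\S_\Z$, hence in the relevant $\S_*$), this equals $\Psi\bigl(\overline{P}([L])\bigr)$, so in particular it lies in $\Psi(\C_*)$. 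Set $[K]:=\overline{P}([L])\in\C_*$. Then $\Psi\bigl(P([K])\bigr)=E(P)\bigl(\Psi([K])\bigr)=E(P)E(P)^{-1}\bigl(\Psi([L])\bigr)=\Psi([L])$, and injectivity of $\Psi$ gives $P([K])=[L]$. Hence $P:\C_*\to\C_*$ is surjective, and therefore bijective. (Equivalently, one may invoke Proposition~\ref{prop:surjectivecharacterization}: the computation just given shows $E(P)^{-1}(\Psi(\C_*))=E(\overline{P})(\Psi(\C_*))\subseteq\Psi(\C_*)$ since $E(\overline{P})$ is induced by a genuine satellite operator, which is exactly the surjectivity criterion there.)

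I do not anticipate a serious obstacle: the corollary is a formal diagram chase once Theorem~\ref{thm:honestinverse} is in hand. The only point requiring a word of care is making sure the pattern $\overline{P}$ furnished by Theorem~\ref{thm:honestinverse} really does lie in the monoid $\S_*$ for \emph{each} of $*\in\{\ex,\top,\Z\}$ so that the main theorem's commuting square applies to it — but this is immediate because $\overline{P}$ is strong winding number $\pm1$, hence in $\S_\str$, and $\S_\str\subseteq\S_\Z$, so $\overline{P}\in\S_*$ in every case under consideration. With that observed, the chase above completes the proof.
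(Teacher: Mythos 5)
Your proposal is correct and follows essentially the same route as the paper: Theorem~\ref{thm:honestinverse} supplies the pattern $\overline{P}$ with $E(P)^{-1}=E(\overline{P})$, surjectivity then follows because $E(P)^{-1}(\Psi(\C_*))=E(\overline{P})(\Psi(\C_*))\subseteq\Psi(\C_*)$ (exactly the criterion of Proposition~\ref{prop:surjectivecharacterization}, which your explicit chase merely unwinds), and injectivity comes from the strong winding number $\pm 1$ conclusion together with Corollary~\ref{cor:injectivecorollary}. Your added care that $\overline{P}\in\S_\str\subseteq\S_\Z$ so the commuting squares apply to it is a fine, if implicit in the paper, observation.
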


\begin{proof}Surjectivity follows from Proposition~\ref{prop:surjectivecharacterization} and Theorem~\ref{thm:honestinverse} since if $E(P)^{-1}=E(\overline{P})$ for some $\overline{P}\in\S_\Z$, then $E(P)^{-1}(\Psi(\C_*)) = (E(\overline P))(\Psi(\C_*))\subseteq \Psi(\C_*)$. Any $P\in\S_\Z$ is injective on $\C_\Z$. By Theorem~\ref{thm:honestinverse}, $P$ is strong winding number $\pm 1$ and therefore, is injective on $\C_\ex$ and $\C_\top$. \end{proof}

Before we provide the promised examples of bijective operators, we will need the following lemma, which provides an extension of the operation $P\mapsto \tau(P)$ of ``twisting a pattern'' to the setting of generalized satellite operators. The function $\tau:\S\to\S$ gives a full right-handed twist to each pattern, as shown in Figure~\ref{fig:twist}. 

\begin{figure}[t]
        \centering
        \begin{subfigure}[b]{0.5\textwidth}
        \centering
		\includegraphics[width = 1.5in]{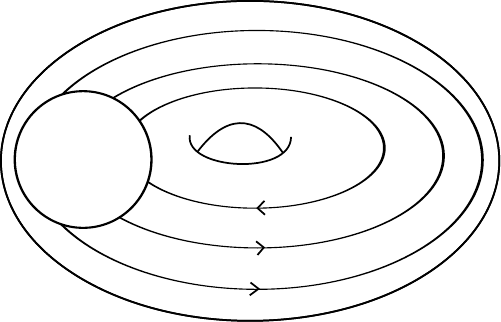}
		\put(-1.325,.45){$\mathscr{P}$}
               	 \caption*{A pattern $P$}
        \end{subfigure}%
        \begin{subfigure}[b]{0.5\textwidth}
        	      \centering
	      \includegraphics[width = 1.5in]{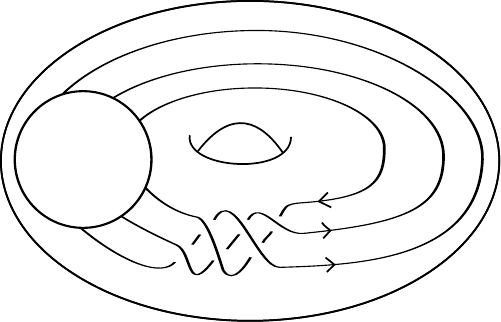}
	      \put(-1.325,.45){$\mathscr{P}$}
               \caption*{The twisted pattern $\tau(P)$}
        \end{subfigure}%
        \caption{Twisting a pattern.}\label{fig:twist}
\end{figure}

\begin{lemma}\label{lem:twist}
For $*\in \{\ex, \top, \Z\}$ there is a map $\widehat\tau:\Shat_* \to \Shat_*$ such that for all $P\in \S_\str$, $\widehat\tau(E(P)) = E(\tau(P))$ as elements of $\Shat_*$.
\end{lemma}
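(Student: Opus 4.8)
The plan is to define $\widehat{\tau}$ on a generalized satellite operator $(M, i_+, i_-)$ by an operation intrinsic to the homology cylinder, mimicking the effect of a full right-handed twist, and then verify it descends to the relevant cobordism quotients and agrees with $E(\tau(P))$ on the image of $E$. Concretely, a full right-handed twist is itself realized by gluing on a model homology cylinder: let $T_1 = (E(Q), i_+, i_-)$ where $Q$ is the pattern obtained by giving the core of the solid torus a single full right-handed twist (equivalently, $E(Q) \cong T \times [0,1]$ as a manifold, but with $i_-$ modified by the self-homeomorphism of $T$ that sends $\ell \mapsto \ell$, $m \mapsto m$ on homology but twists the framing — more precisely $Q = \tau(\text{core})$, a winding number $\pm 1$ pattern). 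First I would check that $Q \in \S_{\str}$, so that $E(Q) \in \widehat{\S}_*^0$ lands in the appropriate submonoid; this is immediate since $E(Q) \cong T\times[0,1]$ has abelian, indeed trivial-modulo-peripheral, fundamental group and winding number $1$. Then I would \emph{define} $\widehat{\tau}(\alpha) = E(Q) \star \alpha$ for $\alpha \in \widehat{\S}_*^0$, and observe this respects $\star$-multiplication and (after checking that $E(Q)$ has a well-defined inverse in $\widehat{\S}_*$, which it does as an element of the group $\widehat{\S}_*$) descends to a map $\widehat{\tau}: \widehat{\S}_* \to \widehat{\S}_*$.

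The heart of the argument is then the identity $\widehat{\tau}(E(P)) = E(\tau(P))$ for $P \in \S_{\str}$. This reduces to a purely topological claim about solid tori: $E(Q) \star E(P) = E(\tau(P))$ as homology cylinders (not merely up to cobordism), where $\star$ is the stacking operation. Unwinding the definition of $\star$ from Section~\ref{sec:homology cylinder}, $E(Q) \star E(P)$ is obtained by gluing $\bdry V(P)$ (carrying $i_+$ of $E(P)$) to the $i_-$-side of $E(Q)$; since $E(Q)$ is a twisted product $T \times [0,1]$, this gluing simply re-embeds $E(P)$ into a new solid torus whose meridian-longitude framing differs from the old one by a full twist — which is exactly the definition of $\tau$ on patterns (compare Figure~\ref{fig:twist}). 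I would make this precise by tracking the four curves $m(V), \ell(V), m(P), \ell(P)$ through the gluing and matching them with the corresponding curves for $\tau(P)$: the twist changes $m(V)$ to $m(V)$ and $\ell(V)$ to $\ell(V)$ set-theoretically but alters which pushoff of the pattern is the $0$-framed longitude, precisely reproducing $\tau(P) \subseteq V(\tau(P))$. This step is essentially bookkeeping but must be done carefully, since the whole lemma hinges on the monoid homomorphism $E$ intertwining the ``add a twist'' model cylinder with the geometric twisting operation.

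Finally, I would confirm well-definedness on the quotient: since $\widehat{\S}_\ex$, $\widehat{\S}_\top$, $\widehat{\S}_R$ are obtained from $\widehat{\S}_*^0$ by quotienting by (strong, strong topological, or $R$-) cobordism, and $\star$-multiplication by the fixed element $E(Q)$ evidently carries cobordant cylinders to cobordant cylinders (glue the product cobordism $E(Q) \times [0,1]$ — or rather the constant cobordism on $E(Q)$ — onto the given cobordism), the map $\alpha \mapsto E(Q) \star \alpha$ passes to each $\widehat{\S}_*$. The main obstacle I anticipate is not any of these verifications individually but getting the orientation and framing conventions exactly right in the identity $E(Q) \star E(P) = E(\tau(P))$: a sign error there would produce a left-handed twist or a composition in the wrong order, so I would pin down the convention by testing it on the trivial pattern (where $\tau(\text{core})$ should give $Q$ itself) and on one explicit winding-number-one example before asserting the general case. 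Everything else — the submonoid membership, the homomorphism property, and cobordism-invariance — follows formally from the structure already established in Propositions~\ref{prop:monoidmorphism1} and~\ref{prop:homomorphism}.
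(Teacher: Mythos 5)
Your key identity fails, and no repair of the form ``$\widehat\tau(\alpha)=E(Q)\star\alpha$ for a fixed $Q$'' can work. First, the model operator you propose does not exist as described: a pattern is an unframed knot in the solid torus, and a full twist applied to a single strand is an isotopy, so $\tau(\mathrm{core})$ is isotopic to the core and $E(\tau(\mathrm{core}))$ is the identity cylinder; likewise there is no self-homeomorphism of $T$ that is the identity on $H_1$ ``but twists the framing,'' since the mapping class group of $T$ is $SL_2(\Z)$. With your definition $\widehat\tau$ would therefore be the identity map of $\Shat_*$, which contradicts the way the lemma is used: in the proof of Corollary~\ref{cor:surjexample} one has $E(\tau(P_m))\neq E(P_m)$ in $\Shat_\Z$ (via the Alexander polynomial of $(\tau(P_m))(U)$). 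More structurally, applying your own sanity check to the trivial pattern shows that any one-sided multiplication intertwining $\tau$ would force $E(Q)$ to be the identity element, so $\tau$ is simply not given by composition with a fixed (generalized) satellite operator.

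Second, even if you replace $E(Q)$ by the honestly twisted product $\bigl(T\times[0,1],\id\times\{1\},(\id\times\{0\})\circ\phi\bigr)$, where $\phi(m)=m$ and $\phi(\ell)=\ell-m$, one-sided stacking only reparametrizes one end: it produces $(E(P),i_+,i_-\circ\phi)$, whose homological monodromy $(i_+)_*^{-1}\circ(i_-)_*$ is $\phi_*\neq\pm\id$. This leaves the submonoid $\Shat^0_*$ altogether, and since the monodromy is a cobordism invariant it cannot equal $E(\tau(P))$, whose monodromy is $\pm\id$. The correct construction (the paper's) reparametrizes \emph{both} boundary identifications: the meridional Dehn twist of $V$ carrying $\tau(P)$ to $P$ changes the peripheral curves at both boundary components of $E(P)$, sending $\ell(V)\mapsto\ell(V)-m(V)$ and $\ell(\tau(P))\mapsto\ell(P)-m(P)$, so one sets $\widehat\tau(M,i_+,i_-)=(M,i_+\circ\phi,i_-\circ\phi)$ --- in your language, stacking twisted products on both sides, not multiplication by a single fixed element. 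Your final step (the same cobordism $W$ works after reparametrizing by $\phi$, since $\phi$ is a homeomorphism) is correct and is exactly how the paper checks well-definedness, but it cannot rescue the wrong definition of $\widehat\tau$.
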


\begin{proof}
Let $P$ be a pattern in the solid torus $V$, and $\tau(P)$ be the corresponding twisted pattern.  Let $f:V\to V$ be the homeomorphism given by a negative meridional Dehn twist.  Notice that $f$ sends $\tau(P)$ to $P$.  Thus, $f$ restricts to a homeomorphism $E(\tau(P))\rightarrow E(P)$.  This homeomorphism sends $m(V)$ to $m(V)$ and $\ell(V)$ to $\ell(V)-m(V)$. Since $f_*$ is well-defined on homology classes, $f$ sends $m(\tau(P))$ to $m(P)$ and $\ell(\tau(P))$ to $\ell(P)-m(P)$.  Let $\phi:T\to T$ be the homeomorphism of the torus sending $m$ to $m$ and $\ell$ to $\ell-m$.  As homology cylinders, $P$ and $\tau(P)$ are given by $(E(P), i_+, i_-)$ and $(E(P), j_+, j_-)$ where $j_\epsilon = i_\epsilon\circ \phi$ for $\epsilon \in \{+,-\}$.

For any homology cylinder $(M,i_+,i_-)$, define $\widehat \tau(M, i_+, i_-) = (M,  i_+ \circ \phi,  i_-\circ \phi)$.  By the preceding paragraph, for any pattern $P$,  $\widehat{\tau}(E(P)) = E(\tau(P))$.  It remains only to show that $\widehat{\tau}$ is well-defined modulo $*$--cobordism.  Assume that $W$ is a $*$--cobordism between $(M, i_+, i_-)$ and $(N, j_+, j_-)$.  Taking advantage of the fact that $\phi:T\to T$ is a diffeomorphism, we see that
$$
\begin{array}{rcl}\bdry W &=& \bigslant{M\sqcup -N}{i_+(x)=j_+(x),i_-(x)=j_-(x), \forall x\in T}
\\&\cong& \bigslant{M\sqcup -N}{ i_+( \phi(x))= j_+( \phi(x)),  i_-( \phi(x))= j_-( \phi(x)), \forall x\in T}\end{array}
$$
Therefore, $W$ is also a $*$--cobordism between $\widehat\tau(M, i_+, i_-)$ and $\widehat\tau(N, j_+, j_-)$.  
\end{proof}

We are now ready to construct examples of satellite operators which yield bijective functions on knot concordance, and are distinct from connected sum operators. 

\begin{corollary}\label{cor:surjexample}Fix $m\geq 0$. The pattern $P_m\subseteq V(P_m)=S^1\times D^2$ shown in Figure~\ref{fig:surjex} (and again in Figure~\ref{fig:proofofsurjex}) gives a bijective map $P_m:\C_*\to \C_*$ for $*\in\{\ex,\top,\Z\}$; moreover as elements of $\Shat_\Z$, $E(P_m)\neq E(Q_J)$ for all knots $J$. \end{corollary}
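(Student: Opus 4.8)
The plan is to verify the two assertions of Corollary~\ref{cor:surjexample} separately, leaning on Corollary~\ref{cor:givessurj} for bijectivity and on a winding-number or twisting argument for non-triviality. First I would establish bijectivity: by Remark~\ref{rem:surjexinverse}, each $P_m$ satisfies the hypothesis of Theorem~\ref{thm:honestinverse}, namely that $m(P_m)$ lies in the normal subgroup of $\pi_1(E(P_m))$ generated by $m(V(P_m))$, so Corollary~\ref{cor:givessurj} immediately gives that $P_m:\C_*\to\C_*$ is bijective for $*\in\{\ex,\top,\Z\}$. I would spell out the handle-slide argument of Remark~\ref{rem:surjexinverse} just enough to confirm that after adding a 2--handle along $m(V(P_m))$ to $E(P_m)$, the meridian $m(P_m)$ cobounds an annulus with $m(V(P_m))$ (as in Figure~\ref{fig:proofofsurjex}) and hence bounds a disk, which is exactly the normal-generation condition needed.

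The second and more substantive claim is that $E(P_m)\neq E(Q_J)$ in $\Shat_\Z$ for every knot $J$. Here the natural strategy is to find an invariant of $\Z$--homology cobordism classes of homology cylinders that distinguishes $E(P_m)$ from all $E(Q_J)$. The cleanest candidate is to pass to an associated closed 3--manifold or link invariant: given a $\Z$--cylinder $(M,i_+,i_-)\in\Shat^0_\Z$, Remark~\ref{rem:cobthenconc} produces a knot $(\widetilde{P_M},\eta_M)$ in a $\Z$--homology sphere $\widehat M$, well-defined up to $\Z$--homology concordance, and $\widehat{M}$ itself is a $\Z$--homology cobordism invariant. For $Q_J$ one computes that $\widehat{E(Q_J)}=S^3$ and $\widetilde{Q_J}=J\#-J$ together with its meridian, i.e.\ the link is concordant to the Hopf link; so if $E(P_m)=E(Q_J)$ in $\Shat_\Z$, then the link $(\widetilde{P_m},\eta)$ would have to be $\Z$--homology concordant to the Hopf link. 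I would then show that $(\widetilde{P_m},\eta)$ is not concordant to the Hopf link by exhibiting a concordance invariant that obstructs this — for instance the Alexander polynomial or Blanchfield form of $\widetilde{P_m}$ (which for these twisted patterns is a genuinely knotted curve with nontrivial Alexander polynomial depending on $m$), or a signature/Casson--Gordon type invariant computed from the surgery description in Figure~\ref{fig:exampleinverse}. The key point is that $\widetilde{Q_J}=J\#-J$ is always algebraically slice with trivial-up-to-concordance Blanchfield pairing, whereas $\widetilde{P_m}$ is not, so no such $J$ can exist.

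Alternatively, a slicker route avoids computing $\widehat M$ and instead uses the twisting map $\widehat\tau$ from Lemma~\ref{lem:twist}: $Q_J=\tau(Q_J')$ for a suitable $Q_J'$ only in trivial cases, and more usefully, applying $\widehat\tau^{-1}$ repeatedly to $E(P_m)$ reduces the winding-number-one pattern $P_m$ to a winding-number-zero (Whitehead-like) pattern, while applying it to any $E(Q_J)$ keeps it a connected-sum operator; an invariant of the resulting winding-number-zero cylinder (e.g.\ whether its associated knot in a homology sphere is topologically slice) then separates the two families. I would pick whichever of these is most self-contained. The main obstacle I anticipate is making the non-triviality argument fully rigorous at the level of $\Shat_\Z$ rather than just at the level of the induced maps on $\C_*$: two homology cylinders can induce the same function on $\C_*$ without being equal in $\Shat_\Z$, so I must be careful to extract an invariant that genuinely lives on $\Shat_\Z$ (such as the concordance class of the associated link in $\widehat M$, via Remark~\ref{rem:cobthenconc}) and to verify that the $P_m$'s produce links with Alexander-module or signature data incompatible with $J\#-J$ for all $J$ simultaneously. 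The winding-number and homological bookkeeping showing $\widehat{E(Q_J)}=S^3$ with the Hopf-link pair, and the explicit identification of $\widetilde{P_m}$ from Figure~\ref{fig:exampleinverse}, are the routine steps I would defer.
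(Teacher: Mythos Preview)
Your bijectivity argument is correct and matches the paper's exactly.

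For non-triviality, your primary approach contains a concrete error: for the connected-sum operator $Q_J$, the knot $\widetilde{Q_J}$ obtained by filling (as in Remark~\ref{rem:link} or Remark~\ref{rem:cobthenconc}) is $Q_J(U)=J$, not $J\#-J$. The associated link is $(J,\mu_J)$ with $\mu_J$ a meridian of $J$, and as $J$ ranges over all knots this link can carry arbitrary Alexander module, signature function, etc. So there is no hope of ruling out \emph{all} $J$ by computing a classical invariant of the first component of $(\widetilde{P_m},\eta_m)$; indeed $\widetilde{P_m}=P_m(U)$ is smoothly slice, so at the level you propose its invariants are indistinguishable from those of many knots $J$. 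The claim ``$\widetilde{Q_J}=J\#-J$ is always algebraically slice'' is simply based on a miscomputation of $\widetilde{Q_J}$.

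Your alternative via the twisting map $\widehat\tau$ is on the right track---it is what the paper does---but the mechanism you describe is wrong: $\tau$ preserves winding number (a meridional Dehn twist of $V$ acts trivially on $H_1(V)$), so iterating $\widehat\tau^{-1}$ never produces a winding-number-zero pattern. The paper exploits $\widehat\tau$ differently. The key observation, which you nearly state, is that $\tau(Q_J)=Q_J$ for every $J$: a pattern of geometric winding number one is unchanged by a full meridional twist. Hence $\widehat\tau$ fixes every $E(Q_J)$ in $\Shat_\Z$, and it suffices to show that $\widehat\tau(E(P_m))\neq E(P_m)$, i.e.\ that $E(\tau(P_m))\neq E(P_m)$. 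This is checked by evaluating at the unknot: $P_m(U)$ is smoothly slice, whereas an Alexander-polynomial computation shows $(\tau(P_m))(U)$ is not slice even in a $\Z$--homology ball. Thus $P_m$ and $\tau(P_m)$ differ already as maps on $\C_\Z$, hence as elements of $\Shat_\Z$. This single evaluation sidesteps the need to compare against all $J$ simultaneously, which is where your first approach founders.
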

\begin{proof}We already know that each $P_m$ satisfies the requirements of Theorem~\ref{thm:honestinverse} and Corollary~\ref{cor:givessurj} from Remark~\ref{rem:surjexinverse}. This gives the first statement. 

In order to see the second result notice that the twisting map $\tau$ of Figure~\ref{fig:twist} and Lemma~\ref{lem:twist} sends $Q_J$ to $Q_J$ for any knot $J$.  It suffices then to prove that $\tau(P_m)\neq P_m$ in $\Shat_\Z$.  In order to see this first observe that $P_m(U)$ is smoothly slice.   However, according to an Alexander polynomial computation  the knots $(\tau(P_m))(U)$ are not slice for $m\geq 0$ (see also ~\cite[Theorem 3.6]{AJOT13}).  Since the Alexander polynomial is an obstruction to being slice in a $\Z$--homology sphere, we conclude that as maps on $\C_\Z$, $P_m$ and $\tau(P_m)$ disagree, so that as elements of $\Shat_\Z$, $P_m\neq \tau(P_m)$.  This completes the proof.\end{proof}

In passing, we note that by Remark~\ref{rem:concthencob}, the above result implies that the link $(P_m,\eta(V(P_m)))$ is not (smoothly, exotically, topologically, or $\Z$--) concordant to the link $(Q_J,m(V(Q_J)))$ for any knot $J$ and $m\geq 0$ (see also~\cite[Proposition 2.3]{CDR14}). It is also worth noting that even though $P_m$ and $Q_J$ are distinct for all knots $J$ and $m\geq 0$, it is still possible that $P_m(K)=J\#K$ for some fixed knot $J$, and any knot $K$. 

We end this section with the following result, leading to a corollary for satellite operators with winding number other than $\pm 1$.

\begin{proposition}\label{prop:surjectiveandhitsunknot}There exists a satellite operator in $\S_\str$ for which the map on $\C_*$ is not surjective if and only if there exists a satellite operator in $\S_\str$ for which the unknot is not in the image of the map on $\C_*$, for $*\in\{\ex,\top\}$. 

Similarly, for $R$ a localization of $\Z$, there exists a satellite operator in $\S_R$ for which the map on $\C_R$ is not surjective if and only if there exists a satellite operator in $\S_R$ for which the unknot is not in the image of the map on $\C_R$.\end{proposition}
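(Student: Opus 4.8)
The plan is to prove both directions by translating surjectivity failure into a statement about a single pattern, using the composition structure of $\S_\str$ (resp.\ $\S_R$) and the group structure of $\Shat_*$. The "only if" direction is the easy one: if $P\in\S_\str$ gives a non-surjective map on $\C_*$, I would simply fix a knot $J$ not in the image of $P$, and replace $P$ by the composition $P\star Q_{-J}$, where $Q_{-J}$ is the connected sum operator of Figure~\ref{fig:connected sum}. Since $Q_{-J}$ is a bijection on $\C_*$ with inverse induced by $Q_J$ (as shown in the propositions above), the composite $P\star Q_{-J}\in\S_\str$ fails to hit the unknot: if $(P\star Q_{-J})(K)=P(K\#-J)$ were the unknot for some $K$, then applying the bijection $Q_J$ would give $P(K)=Q_J(U)=J$, contradicting the choice of $J$. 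The same argument works verbatim for $\S_R$, since connected-sum operators lie in $\S_R$ (they have winding number one) and are bijective on $\C_R$. The monoid $\S_\str$ is closed under $\star$ with $Q_{-J}\in\S_\str$, so $P\star Q_{-J}$ is again a strong winding number $\pm1$ operator.

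The "if" direction is immediate and essentially logical: if some $P\in\S_\str$ does not have the unknot in its image, then in particular the map $P:\C_*\to\C_*$ is not surjective, since the unknot is an element of $\C_*$. No construction is needed here.

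The main subtlety — and the step I expect to require the most care — is verifying that $P\star Q_{-J}$ genuinely lies in the relevant submonoid and that the composition identity $(P\star Q_{-J})(K)=P(Q_{-J}(K))=P(K\#-J)$ holds on the nose in $\C_*$, which follows from the fact (established in Section~\ref{sec:satellites}) that $(P\star Q)(K)=P(Q(K))$ together with well-definedness of satellite operators on $\C_*$. One should also note that the membership $Q_{-J}\in\S_\str$ follows because $Q_{-J}$ has geometric winding number one, hence $E(Q_{-J})$ is easily seen to be a strong cylinder (both $\{m(V),\ell(V)\}$ and $\{m(P),\ell(P)\}$ normally generate $\pi_1(E(Q_{-J}))$, which is the knot group of $-J$ with a meridian). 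For the $R$ case one uses instead that winding number one operators lie in $\S_R$ for every localization $R$. With these observations in place, the equivalence follows by combining the two directions above.
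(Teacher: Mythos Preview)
Your argument for the forward direction contains a genuine error: you composed the connected-sum operator on the wrong side. With the paper's convention $(P\star Q)(K)=P(Q(K))$, the operator $P\star Q_{-J}$ acts by $K\mapsto P(K\#-J)$, and since $Q_{-J}$ is a bijection on $\C_*$, the image of $P\star Q_{-J}$ is \emph{exactly} the image of $P$. So your new operator misses the unknot if and only if $P$ already did, and the choice of $J$ plays no role. Concretely, the step ``applying the bijection $Q_J$ would give $P(K)=Q_J(U)=J$'' is invalid: from $P(K\#-J)=U$ you cannot deduce $P(K)=J$, because $P$ does not in general commute with connected sum (that would amount to $P(K\# L)=P(K)\# L$ for all $L$, which fails for most patterns).

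The fix is to compose on the other side, exactly as the paper does: take $Q_{-J}\star P$, so that $(Q_{-J}\star P)(K)=-J\# P(K)$. If this were the unknot for some $K$, then $P(K)=J$, contradicting the choice of $J$. With this correction your proof matches the paper's. Your discussion of why $Q_{-J}\in\S_\str$ (resp.\ $\S_R$) and why the composite stays in the relevant submonoid is fine and applies equally well to $Q_{-J}\star P$.
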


\begin{proof}For the $\Rightarrow$ direction, let $P$ be a satellite operator that is not surjective on $\C_*$ for some $*$, i.e.\ there exists a knot $J$ such that $P(K)$ is not concordant to $J$, in the appropriate sense dictated by the value of $*$, for any knot $K$. Then the satellite operator $Q_{-J}\star P$ does not have the unknot in the image of its induced map on $\C_*$, since if $(Q_{-J}\star P)(K)=-J\#P(K)$ were concordant to the unknot for some $K$, then $P(K)$ would be concordant to $J$. The $\Leftarrow$ direction is trivial. \end{proof}

\begin{corollary}For any integer $n$, with $\lvert n\rvert >1$ and $R=\Z[\frac{1}{n}]$, there exist  satellite operators in $\S_R$ which do not have the unknot in their image as a map on $\C_R$.\end{corollary}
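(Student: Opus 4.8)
The plan is to combine Proposition~\ref{prop:n>1} with Proposition~\ref{prop:surjectiveandhitsunknot}. First I would recall that for $|n|>1$ and $R=\Z[\frac{1}{n}]$, Proposition~\ref{prop:n>1} already hands us a satellite operator of winding number $n$ which lies in $\S_R$ (since $\frac{1}{n}\in R$) and which fails to be surjective on $\C_R$: the operator $P$ used there, of winding number $n\neq\pm 1$, is not surjective on $\C_*$ for any $*\in\{\ex,\top,R\}$, in particular for $*=R$. Thus the hypothesis ``there exists a satellite operator in $\S_R$ for which the map on $\C_R$ is not surjective'' of the second half of Proposition~\ref{prop:surjectiveandhitsunknot} is satisfied.

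Next I would invoke the $\Rightarrow$ direction of the second half of Proposition~\ref{prop:surjectiveandhitsunknot}: given such a non-surjective $P\in\S_R$, with $J$ a knot not in the image of $P:\C_R\to\C_R$, the composite $Q_{-J}\star P$ is a satellite operator which does not have the unknot in the image of its induced map on $\C_R$. The one small point to check is that $Q_{-J}\star P$ still lies in $\S_R$: this is immediate because winding numbers multiply under $\star$ and $w(Q_{-J})=1$, so $w(Q_{-J}\star P)=w(P)=n$ is still invertible in $R$; alternatively, $\S_R$ is a submonoid and both factors lie in it. That completes the argument.

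There is essentially no obstacle here — the corollary is a one-line consequence of the two preceding results — but if anything needs care it is confirming that the operator exhibited in the proof of Proposition~\ref{prop:n>1} genuinely belongs to $\S_R$ rather than merely to $\S$; this holds precisely because $R=\Z[\frac{1}{n}]$ was chosen so that $w(P)=n$ is invertible in $R$, which is exactly condition (a) of Definition~\ref{def:Smonoids}. So the proof reads: by Proposition~\ref{prop:n>1} there is a winding number $n$ operator $P\in\S_R$ with $P:\C_R\to\C_R$ not surjective, and then by the $\Rightarrow$ direction of Proposition~\ref{prop:surjectiveandhitsunknot} there is an operator in $\S_R$ (namely $Q_{-J}\star P$ for a suitable $J$) whose induced map on $\C_R$ omits the unknot.
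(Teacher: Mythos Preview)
Your proposal is correct and is precisely the argument the paper intends: the corollary is stated immediately after Proposition~\ref{prop:surjectiveandhitsunknot} with no separate proof, and the implied reasoning is exactly to feed the non-surjectivity from Proposition~\ref{prop:n>1} into the $\Rightarrow$ direction of Proposition~\ref{prop:surjectiveandhitsunknot}. Your check that $Q_{-J}\star P\in\S_R$ (via multiplicativity of winding number, or the submonoid property) is the only detail worth spelling out, and you have done so.
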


\section{Concordance to knots in $S^3$ and surjectivity of satellite operators}\label{sec:kirbyconnection}

Akbulut conjectured that there exists a winding number one satellite operator $P$ which does not have the unknot in its image under $P:\C_\ex\to\C_\ex$.  By Proposition~\ref{prop:surjectiveandhitsunknot} this conjecture is equivalent to the conjecture that not all winding number one satellite operators are surjective. We restate Akbulut's conjecture in these terms.

\begin{conjecture}[Problem 1.45 of \cite{kirbylist}]\label{conj:Akbulutconj}There is a satellite operator of winding number one, $P$, such that $P:\C_*\to \C_*$ is not surjective, for $*\in\{\ex,\top\}$.\end{conjecture}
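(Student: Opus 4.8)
The plan is to produce a single pattern $P\in\S_\str$ of winding number $+1$ for which the unknot $U$ is not in the image of $P\colon\C_*\to\C_*$; by Proposition~\ref{prop:surjectiveandhitsunknot} this is equivalent to the non-surjectivity of \emph{some} pattern in $\S_\str$, and hence proves the conjecture for $*\in\{\ex,\top\}$. Unwinding the commuting diagrams~\eqref{diag:maindiagram} together with the injectivity of $\Psi$ (Proposition~\ref{prop:enlarge}), one sees that $U$ lies in the image of $P$ precisely when $E(P)^{-1}\bigl(\Psi([U])\bigr)$ lies in $\Psi(\C_*)$, i.e.\ precisely when the pair $(K',Y'):=E(P)^{-1}\cdot(U,S^3)$ -- a knot $K'$ in a $\Z$--homology sphere $Y'$ -- is $*$--concordant to a knot in $S^3$ (through a homology cobordism satisfying the $\pi_1$--normal-generation condition of Definition~\ref{def:genconcdefn}). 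So the conjecture reduces to the following: \emph{construct $P\in\S_\str$ and obstruct $(K',Y')$ from being $*$--concordant to any knot in $S^3$.}

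First I would make $(K',Y')$ explicit. Presenting $P$ by its $2$--component link $(\widetilde P,\eta)$ as in Remark~\ref{rem:link}, and computing the action of $E(P)^{-1}$ (in the sense of Proposition~\ref{prop:commutes}) via the Dehn fillings of $-E(P)$ appearing in the proof of Theorem~\ref{thm:honestinverse}, one obtains $Y'$ as the result of zero--surgery on both components of the reverse mirror image of $(\widetilde P,\eta)$ and $K'$ as the surgery dual of the component coming from $\eta$ -- this is precisely the picture of Proposition~\ref{prop:inverseaslink}, except that when the hypothesis $m(P)\in\langle\langle m(V)\rangle\rangle$ of Theorem~\ref{thm:honestinverse} holds the surgered manifold collapses to $S^3$ and $P$ is bijective by Corollary~\ref{cor:givessurj}. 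So for a non-surjective example I would take $P$ \emph{outside} that hypothesis while keeping it in $\S_\str$, and use the remaining freedom in the tangle defining $P$ to engineer $(K',Y')$ so that the obstruction below applies.

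The crux is that obstruction, and the standard invariants are unavailable. Levine--Tristram signatures, the Alexander module, and the Blanchfield and Casson--Gordon pairings depend only on the exterior of a knot and cannot detect whether the ambient homology sphere is $S^3$; they will not separate $(K',Y')$ from the knots in $S^3$. Nor is it enough to show that $Y'$ fails to be $\Z$--homology cobordant to $S^3$: Proposition~\ref{prop:conjectures} already produces its knot in a manifold that \emph{is} homology cobordant to $S^3$, so the obstruction must genuinely see the knotting of $K'$ inside $Y'$, not merely the homeomorphism type of $Y'$. What is needed is a $4$--dimensional concordance invariant of the pair $(K',Y')$ taking a value realized by no knot in $S^3$; the natural candidates are Heegaard Floer invariants -- the correction terms $d$ of surgeries on $(K',Y')$, or of the double branched cover $\Sigma_2(Y',K')$, or the invariants $\tau,\nu^+,\Upsilon$ extended to knots in homology spheres. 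Concretely, I would try to choose $P$ so that $d\bigl(Y'_{+1}(K')\bigr)>0$: for every knot $J\subseteq S^3$ one has $d\bigl(S^3_{+1}(J)\bigr)\le 0$, and if $(K',Y')$ were $\ex$--concordant to $(J,S^3)$ then surgering along the concordance annulus inside the smooth homology cobordism would make $Y'_{+1}(K')$ smoothly $\Z$--homology cobordant to $S^3_{+1}(J)$, forcing $d\bigl(Y'_{+1}(K')\bigr)=d\bigl(S^3_{+1}(J)\bigr)\le 0$ -- a contradiction. This settles $*=\ex$; for $*=\top$ the smooth invariant $d$ must be replaced by a topological obstruction -- Casson--Gordon invariants of prime-power branched covers, or metabelian $\rho$--invariants -- again arranged so that its value on $(K',Y')$ cannot come from any knot in $S^3$.

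The hard part is to do all of this at once: exhibit a $P$ that is simultaneously (a) an honest pattern of strong winding number $+1$ lying outside the hypothesis of Theorem~\ref{thm:honestinverse}, (b) explicit enough that $(K',Y')$ and the relevant surgery or branched cover are computable, and (c) such that the chosen invariant rules out \emph{every} knot in $S^3$, not just finitely many -- which for the topological statement typically forces an infinite-family argument. By Proposition~\ref{prop:conjectures} this difficulty is at least that of Matsumoto's problem~\cite[Problem 1.30]{kirbylist}, so no soft resolution should be expected; the construction was in fact carried out, shortly after this paper was circulated, by A.\ Levine~\cite{Lev14}.
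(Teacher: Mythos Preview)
The statement you are addressing is labeled a \emph{Conjecture} in the paper; the paper does not prove it and does not claim to. The authors record it as a restatement of Akbulut's problem, relate it to Matsumoto's problem via Proposition~\ref{prop:conjectures}, and note in a remark that it was settled (in the smooth/exotic case) by A.~Levine after the preprint circulated. So there is no ``paper's own proof'' to compare your attempt against.

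Your write-up is not a proof but a strategy, and you essentially say so: the phrases ``I would try to choose $P$ so that\ldots'' and ``The hard part is to do all of this at once'' mark exactly the missing content. Concretely, you never exhibit a specific pattern $P$, never verify that the resulting $(K',Y')$ has $d\bigl(Y'_{+1}(K')\bigr)>0$, and for $*=\top$ you only gesture at Casson--Gordon or $\rho$--invariants without indicating how any value attained by $(K',Y')$ would be excluded for \emph{all} knots in $S^3$. The reduction you give via the commuting diagram and Proposition~\ref{prop:surjectivecharacterization} is correct and is precisely the viewpoint the paper develops, but the obstruction step---which is the entire difficulty---is absent. Note also that Levine's argument in~\cite{Lev14} handles the smooth/exotic case with Heegaard Floer $d$--invariants; your suggested route for $*=\top$ via topological invariants is a separate and, as far as this paper is concerned, still-open problem, so the proposal does not even sketch a path to the full statement as phrased.
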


Consider a knot $K$ in a homology sphere $M$. Then $(K,M)$ gives a class in $\Chat_*$ for $*\in\{\ex,\top\}$ and one may ask whether there is some knot $K'\subseteq S^3$ such that $(K,M)$ and $(K',S^3)$ are equivalent in $\Chat_*$. In the PL category, this forms Problem~1.31 of~\cite{kirbylist}. We restate this as a conjecture. 

\begin{conjecture}[Problem 1.31 of \cite{kirbylist}]\label{conj:Matsumotoconj} The image of $\Psi:\C_*\to \Chat_*$ is the set of all concordance classes $(K,M)$ of knots $K$ in 3--manifolds $M$ where $M$ is $*$--cobordant to $S^3$.\end{conjecture}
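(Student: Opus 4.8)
The inclusion ``$\subseteq$'' is immediate from the definitions. If $(K,M)$ lies in the image of $\Psi$, then by Proposition~\ref{prop:enlarge} and Definition~\ref{def:genconcdefn} there is a $*$--homology cobordism $W$ from $M$ to $S^3$ (with $\pi_1(W)$ normally generated by each of the images of $\pi_1(M)$ and $\pi_1(S^3)$ when $*\in\{\ex,\top\}$) in which $K$ cobounds a (locally flat) annulus with some knot $K'\subseteq S^3$; ignoring the knots, $W$ already exhibits $M$ as $*$--cobordant to $S^3$. Hence the image of $\Psi$ is contained in the stated set, and only the reverse inclusion is at issue.

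The reverse inclusion is precisely Problem~1.31 of~\cite{kirbylist} and, to the best of our knowledge, is open; what follows is a plan of attack rather than a proof. We must show that whenever $M$ is $*$--cobordant to $S^3$ and $K\subseteq M$ is an arbitrary knot, $(K,M)$ is $*$--concordant to a knot in $S^3$. First I would fix a $*$--homology cobordism $W$ from $M$ to $S^3$ and a handle decomposition of $W$ built on the collar $M\times[0,1]$. After cancelling $0$-- and $4$--handles, the essential difficulty already concerns the $2$--handles: passing a critical value of index $2$ replaces the current level $3$--manifold by Dehn surgery along the attaching circle (the $1$--handles only introduce $S^1\times S^2$ summands off of which $K$ can be pushed, and the dual $3$--handles undo them). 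The goal is then to isotope $K$, within its $*$--concordance class in $W$, level by level past every critical value, so that it terminates in $S^3\times\{1\}$ and sweeps out an embedded \emph{annulus}; at each level the attaching and belt circles are $1$--dimensional, so $K$ can be kept disjoint from them, and the entire content of the conjecture is that the resulting trace surface can be arranged to have genus zero.

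The step I expect to be the genuine obstacle is exactly this last one. Although $K$ can be isotoped off the attaching \emph{circle} of a $2$--handle, it cannot in general be isotoped off a tubular neighbourhood of that circle; when an arc of $K$ running through such a surgery solid torus is \emph{essential} there, no isotopy of the ambient $3$--manifold carries $K$ across the handle, and there is no evident choice of replacement arc in the dual solid torus whose swept--out surface is an annulus rather than a surface of higher genus. In the language of this paper, this obstruction is not incidental: by Proposition~\ref{prop:conjectures}, producing a single knot $(K,M)$ that violates the reverse inclusion for some $*\in\{\ex,\top\}$ is equivalent to exhibiting a strong winding number $\pm1$ pattern that fails to be surjective on $\C_*$, i.e.\ to Conjecture~\ref{conj:Akbulutconj}. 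Conversely, from such a $(K,M)$ one should be able to manufacture the non-surjective pattern directly, by drilling $M$ along $K$ together with a meridian of $K$, recognising the result as a generalized satellite operator in the sense of Section~\ref{sec:Shat}, and arguing as in Theorem~\ref{thm:honestinverse} and Proposition~\ref{prop:surjectivecharacterization} that its inverse is not realised by an honest pattern. Thus the realistic target is the \emph{equivalence} ``the reverse inclusion holds for $\C_*$ if and only if every strong winding number $\pm1$ pattern induces a surjection on $\C_*$'', with Proposition~\ref{prop:conjectures} supplying one implication and the drilling construction just sketched the other; and in view of the non-surjective patterns subsequently produced in~\cite{Lev14}, one should in fact expect the conjecture as literally stated to fail in at least some of the relevant categories.
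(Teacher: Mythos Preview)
This statement is a \emph{conjecture}, and the paper does not prove it; there is no ``paper's own proof'' to compare against. You correctly identify this, correctly note that the inclusion $\Psi(\C_*)\subseteq\{(K,M):M\text{ is $*$--cobordant to }S^3\}$ is immediate from the definitions, and correctly label the remainder as a plan of attack rather than a proof. On that level your proposal is appropriate.

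Two points of caution about the surrounding discussion. First, you write that ``by Proposition~\ref{prop:conjectures}, producing a single knot $(K,M)$ that violates the reverse inclusion \dots\ is \emph{equivalent} to exhibiting a strong winding number $\pm1$ pattern that fails to be surjective''. Proposition~\ref{prop:conjectures} only supplies the implication \emph{non-surjective pattern $\Rightarrow$ counterexample to Conjecture~\ref{conj:Matsumotoconj}}; you do later acknowledge this asymmetry, but the earlier sentence overstates what the paper proves. Second, your sketched converse---drilling $K$ and a meridian out of $M$ to obtain a generalized satellite operator and then arguing via Theorem~\ref{thm:honestinverse} that its inverse is not an honest pattern---does not obviously go through: the resulting homology cylinder need not be in the image of $E$ (since $M\neq S^3$), and Theorem~\ref{thm:honestinverse} concerns patterns satisfying a specific $\pi_1$ hypothesis, not arbitrary generalized operators. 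So the ``equivalence'' you propose as a realistic target is itself a nontrivial assertion not contained in the paper.
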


We can use the group action given in the main theorem to prove the following relationship between the two conjectures above.

\begin{proposition}\label{prop:conjectures}For $P\in \S_\str$ and any $K\in \C_\ex$ (resp.\ $\C_\top$), if $K\notin \im(P:\C_\ex\to\C_\ex)$ (resp.\ $\im(P:\C_\top\to\C_\top))$, then the knot $E(P)^{-1}(\Psi(K))$ is not in the image of $\Psi:\C_\ex\to\Chat_\ex$ (resp.\ $\C_\top\to\Chat_\top$) and moreover, is contained in a 3--manifold smoothly (resp.\ topologically) homology cobordant to $S^3$. \end{proposition}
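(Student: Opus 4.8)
The plan is to combine the group-action structure from the Main Theorem with the characterization of surjectivity in Proposition~\ref{prop:surjectivecharacterization}. Fix $*\in\{\ex,\top\}$ and $P\in\S_\str$, and suppose $K\in\C_*$ satisfies $K\notin\im(P:\C_*\to\C_*)$. The element $E(P)\in\Shat_*$ lies in a group acting on $\Chat_*$, so $E(P)^{-1}$ makes sense and $E(P)^{-1}(\Psi(K))$ is a well-defined class in $\Chat_*$, represented by a knot in some $\Z$--homology sphere $M$. I would first argue that this class is \emph{not} in $\im(\Psi)$: if it were, say $E(P)^{-1}(\Psi(K)) = \Psi(J)$ for some knot $J\subseteq S^3$, then applying the bijection $E(P)$ and using commutativity of the diagrams in~\eqref{diag:maindiagram} gives $\Psi(K) = E(P)(\Psi(J)) = \Psi(P(J))$, and injectivity of $\Psi$ (Proposition~\ref{prop:enlarge}) forces $K = P(J)$ in $\C_*$, contradicting $K\notin\im(P)$. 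This is essentially the diagram chase already used in Corollary~\ref{cor:injectivecorollary} and Proposition~\ref{prop:surjectivecharacterization}, run in reverse.

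The second, more substantive part is identifying the $3$--manifold $M$ containing the knot $E(P)^{-1}(\Psi(K))$ and checking it is homology cobordant to $S^3$. Here I would unwind the construction in the proof of Proposition~\ref{prop:commutes}: the action of a generalized satellite operator $(N, j_+, j_-)$ on $(K', Y)$ produces a knot in $Y' = S^1\times D^2 \cup N \cup (Y - K')$, which is an $R$--homology sphere whenever the cylinder is in $\Shat^0_R$ (here $R = \Z$, and in the $\ex$/$\top$ case we get a genuine $\Z$--homology sphere since $E(P)^{-1}$ is a strong cylinder). Taking $(N, j_+, j_-) = E(P)^{-1} = (-E(P), i_-, i_+)$ and $(K', Y) = (K, S^3)$, the manifold $M$ is $S^1\times D^2 \cup_{} (-E(P)) \cup_{} (S^3 - K)$; this is a closed $\Z$--homology sphere by the Mayer--Vietoris argument cited in that proof.

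To see $M$ is homology cobordant to $S^3$, I would build an explicit homology cobordism by stacking: apply $E(P)$ to the knot $E(P)^{-1}(\Psi(K))$ in $M$. By the group action (Proposition~\ref{prop:acts}), $E(P)\cdot\big(E(P)^{-1}(\Psi(K))\big) = \Psi(K) = (K, S^3)$, so the resulting knot lives in $S^3$; the proof of Proposition~\ref{prop:acts} exhibits, from an $R$--cobordism between cylinders and an $R$--concordance between knots, a smooth (resp.\ topological) $R$--homology cobordism realizing the equality. Applying this with the identity cobordism on $E(P)$ and the constant concordance on $K$ gives a smooth (resp.\ topological) $\Z$--homology cobordism from $M$ to $S^3$, as desired. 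Concretely, this homology cobordism is $S^1\times D^2\times[0,1] \cup \big(E(P)\times[0,1]\big) \cup \big((S^3 - K)\times[0,1]\big)$ with the two ends being $M$ and $S^3$, which is a homology cobordism by the same Mayer--Vietoris argument used throughout Section~\ref{sec:action}.

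I expect the main obstacle to be purely bookkeeping: carefully matching the gluing identifications ($i_\pm$ versus $j_\pm$, and the reversal of roles in passing to $E(P)^{-1} = (-E(P), i_-, i_+)$) so that the stacked $4$--manifold genuinely has $M$ on one end and $S^3$ on the other, and confirming that no $S^2$ boundary components or orientation issues spoil the Mayer--Vietoris computation. There is no hard topology here beyond what Propositions~\ref{prop:commutes} and~\ref{prop:acts} already supply; the content is organizing the diagram chase and recognizing that "applying $E(P)$" to the candidate knot is exactly the mechanism producing both the contradiction in the first part and the homology cobordism in the second.
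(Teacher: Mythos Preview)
Your diagram chase for the first claim is correct and identical to the paper's. For the second claim you have essentially the right 4--manifold, but your justification contains a confusion that hides the one substantive step.

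When you write ``applying this with the identity cobordism on $E(P)$ and the constant concordance on $K$'', you are feeding Proposition~\ref{prop:acts} a product on both inputs, and a product has the same 3--manifold on both ends --- you would get a cobordism from something to itself. What is actually needed is Levine's cobordism: the same 4--manifold $E(P)\times[0,1]$, but viewed as a cobordism from $E(P)\star E(P)^{-1}$ to the identity cylinder $T\times[0,1]$, paired with the constant concordance on $(K,S^3)$. That input to the construction of Proposition~\ref{prop:acts} produces a homology cobordism from the 3--manifold carrying $(E(P)\star E(P)^{-1})\cdot(K,S^3)$ to the 3--manifold carrying $\id\cdot(K,S^3)$, namely $S^3$.

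This still leaves the question of why that first 3--manifold is $M$, and that is not mere bookkeeping; it is the only real content, which the paper isolates as a separate lemma. The point is that for an honest pattern $P$ (as opposed to an arbitrary homology cylinder), applying $E(P)$ at the level of actual pairs never changes the ambient 3--manifold: in the construction of Proposition~\ref{prop:commutes} the piece $S^1\times D^2\cup_{m(P)} E(P)$ is just the solid torus $V(P)$ again, so one is cutting a solid torus out of $Y$ and regluing it identically. Hence $E(P)\cdot(K',M)$ is literally a knot in $M$, and its equality with $(K,S^3)$ in $\Chat_*$ then \emph{is} the desired homology cobordism from $M$ to $S^3$. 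Your sentence ``so the resulting knot lives in $S^3$'' has this backwards: the resulting knot lives in $M$, and the fact that its $\Chat_*$--class is represented by $(K,S^3)$ is what furnishes the cobordism. Once this observation is inserted, your argument and the paper's coincide.
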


\begin{proof} 
To see the first claim, suppose that $E(P)^{-1}(\Psi(K))$ is equal in $\Chat_*$ to $\Psi(J)$ for some $J\in\C_*$ (for $*=\ex$ or $\top$) then $\Psi(K)= E(P)(\Psi(J))$. Then by the diagrams in \eqref{diag:maindiagram}, since $\Psi$ is injective, $K=P(J)$ and therefore, $K\in \im(P:\C_*\to\C_*)$. The second statement follows from the following lemma since $\Psi(K)=(K,S^3)$. \end{proof}

\begin{lemma}If $P\in \S_\str$ and $(K,Y)\in \Chat_\ex$ (resp.\ $\Chat_\top$), then $E(P)^{-1}(K,Y)$ is a knot in a 3--manifold which is smoothly (resp.\ topologically) homology cobordant to $Y$.
\end{lemma}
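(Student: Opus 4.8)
The plan is to unwind the definition of the action from Proposition~\ref{prop:commutes} applied to the inverse cylinder $E(P)^{-1}$. Recall that $E(P)^{-1} = (-E(P), i_-, i_+)$, a generalized $*$--satellite operator lying in $\Shat^0_\str$. Given a knot $(K,Y) \in \Chat_\ex$ (resp.\ $\Chat_\top$), the construction in the proof of Proposition~\ref{prop:commutes} produces the knot $E(P)^{-1}\cdot (K,Y) = (K', Y')$ where
\[
Y' = S^1\times D^2 \underset{\bdry D^2 \sim i_+(m)}{\cup} \big({-E(P)}\big) \underset{i_-(m)\sim m(K)}{\underset{i_-(\ell)\sim \ell(K)}{\cup}} (Y - K),
\]
and $K'$ is the core of the $S^1\times D^2$ summand. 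The goal is to exhibit a smooth (resp.\ topological) homology cobordism from $Y$ to $Y'$.

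First I would build the candidate cobordism directly from the mapping-cylinder structure. Take $W := \big(S^1\times D^2 \times [0,1]\big) \cup \big({-E(P)\times [0,1]}\big) \cup \big((Y-K)\times[0,1]\big)$, glued along the same instructions used to build $Y'$ but now crossed with $[0,1]$, except that along the $(Y-K)$ piece one glues in $Y\times\{0\}$ on the bottom so that the bottom boundary is all of $Y$ rather than $Y'$. Concretely: the bottom of $W$ should be $Y$ and the top should be $Y'$. The cleanest way to see such a $W$ exists is to observe that $-E(P)$ is a $\Z$--homology cylinder (it is the orientation-reverse of a $\Z$--cylinder, and $E(P)\in H_\str \subseteq H_\Z$ since $P\in\S_\str$), so after Dehn-filling $i_+(m)$ we obtain a $\Z$--homology solid torus $X$ glued to $Y-K$ along the peripheral torus identifying longitude-to-longitude and meridian-to-meridian; gluing a $\Z$--homology solid torus in place of the original $N(K)$ changes $Y$ only within its homology type, and the trace of this replacement is the desired cobordism. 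A Mayer--Vietoris argument over the decomposition of $W$, using that each piece $S^1\times D^2\times[0,1]$, $-E(P)\times[0,1]$, and $(Y-K)\times[0,1]$ is a $\Z$--homology cobordism (the first two by the cylinder property, the third trivially), shows $W$ is a $\Z$--homology cobordism from $Y$ to $Y'$; this is the same computation carried out in the proof of Proposition~\ref{prop:acts}. Since $P\in\S_\str$, all pieces are smooth and one is in the exotic setting; if we only know topological local-flatness (the $\top$ case), $W$ is merely topological, exactly as in Proposition~\ref{prop:acts}.

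The main obstacle, and the only place real care is needed, is keeping track of which boundary component is which and verifying that the glued-in piece really does restore $Y$ on the bottom: one must check that capping off the $(Y-K)\times\{0\}$ end of $(Y-K)\times[0,1]$ together with the filled $-E(P)$ and solid-torus pieces reconstructs $Y$ (not some other manifold). This follows because collapsing $-E(P)$ to its peripheral torus times $[0,1]$ — homologically it behaves like $T\times[0,1]$ — lets the solid torus $S^1\times D^2$ slide across to fill $N(K)$ back in with the correct meridian, recovering $Y$. Once this bookkeeping is done the homology computation is routine Mayer--Vietoris, identical in form to the one already appearing in the proof of Proposition~\ref{prop:acts}, so I would simply cite that argument rather than repeat it.
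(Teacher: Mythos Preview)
There is a genuine gap in the construction of $W$. The $4$--manifold you build from the three pieces $S^1\times D^2\times[0,1]$, $-E(P)\times[0,1]$, and $(Y-K)\times[0,1]$, glued along the tori crossed with $[0,1]$, is nothing other than $Y'\times[0,1]$: both boundary components are copies of $Y'$. The clause ``except that along the $(Y-K)$ piece one glues in $Y\times\{0\}$ on the bottom'' does not describe a well-defined modification, since once the three product pieces are assembled the bottom is already the closed $3$--manifold $Y'$ and there is no free boundary to which one could attach $Y$. Similarly, the ``trace of this replacement'' is not a construction: swapping the genuine solid torus $N(K)$ for the $\Z$--homology solid torus $X=(S^1\times D^2)\cup_{m(V)}(-E(P))$ is a $3$--dimensional cut-and-paste, and it has no canonical $4$--dimensional trace unless one already possesses a homology cobordism rel boundary from $N(K)$ to $X$---which is precisely the point at issue. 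That $-E(P)$ ``homologically behaves like $T\times[0,1]$'' does not permit you to collapse it to $T\times[0,1]$ inside any $4$--manifold; this is a statement about homology groups, not about bordism. The Mayer--Vietoris computation borrowed from Proposition~\ref{prop:acts} verifies that a given $W$ with the correct boundary is a homology cobordism, but it does not manufacture $W$.

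The paper avoids any direct construction by exploiting an asymmetry between $E(P)$ and $E(P)^{-1}$. For the \emph{non-inverted} cylinder, the Dehn-filled piece $(S^1\times D^2)\cup_{m(P)}E(P)$ is literally the solid torus $V$, so applying $E(P)$ to any $(K',Y')$ lands in a $3$--manifold \emph{diffeomorphic} to $Y'$. Now set $(K',Y')=E(P)^{-1}(K,Y)$ and $(K'',Y'')=E(P)(K',Y')$; then $Y''\cong Y'$ on the nose. Since $E(P)\star E(P)^{-1}$ is the identity in $\Shat_*$, Proposition~\ref{prop:acts} gives $(K'',Y'')=(K,Y)$ in $\Chat_*$, and the definition of $\Chat_*$ then supplies the desired smooth (resp.\ topological) homology cobordism from $Y''\cong Y'$ to $Y$. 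The cobordism therefore comes out of the well-definedness of the group action, not from a product construction.
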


\begin{proof}
By definition. $E(P)(K,Y)$ is a knot in the 3--manifold
$$\overline{Y}= S^1\times D^2 \underset{\bdry D^2\sim m(P)}{\cup} E(P) \underset { \ell(V) \sim \ell(K)} {\underset {
m(V) \sim m(K)}{\cup} }Y-K.$$
But $S^1\times D^2 \,\cup\, E(P)$ is just a solid torus with meridian $m(V)$ and therefore, these gluing instructions cut a solid torus out of $Y$ and then glue it back in the same way. Therefore, $\overline{Y}$ is diffeomorphic to $Y$ and $E(P)(K,Y)$ is a knot in $Y$.

Let $E(P)^{-1}(K,Y) = (K',Y')$ and $E(P)(K',Y') = (K'',Y'')$.  By the preceding paragraph, $Y''$ is diffeomorphic to $Y'$.  Since $E(P)\circ E(P)^{-1}$ is the identity map on $\Chat_\ex$ (resp.\ $\Chat_\top$), $K$ is concordant to $K''$ in a smooth (resp.\ topological) homology cobordism between $Y$ and $Y''$, and hence $Y$ is  smoothly (resp.\ topologically) homologically cobordant to $Y''=Y'$. Since $E(P)^{-1}(K,Y) = (K',Y')$, the proof is completed. \end{proof}

The above proposition shows that if a strong winding number one satellite operator $P$ fails to be surjective on $\C_\ex$ (resp.\ $\C_\top$), i.e.\ there is some $K\in\C_\ex$ (resp.\ $\C_\top$) such that $K\neq P(J)$ for all knots $J$, then there exists a knot $K'$ in a 3--manifold $Y'$ smoothly (resp.\ topologically) concordant to $S^3$, such that $(K',Y')$ is not exotically (resp.\ topologically) concordant to any knot in $S^3$, where $(K',Y')=E(P)^{-1}(K,S^3)$.

\bibliographystyle{alpha}
\bibliography{bib}
\noindent
\end{document}